\def\mapright#1{\smash{
\mathop{\rg}\limits^{#1}}}
\def\mapdown#1{\bigg\downarrow
\rlap{$\vcenter{\hbox{$\scriptstyle#1$}}$}}
\def\rg{\hbox to 30pt{\rightarrowfill}}
\def\lg{\hbox to 30pt{\leftarrowfill}}
          \newtheorem{theorem}{Theorem}[section]
      \newtheorem{corollary}[theorem]{Corollary}
      \newtheorem{lemma}[theorem]{Lemma}
      \newtheorem{example}[theorem]{Example}
      \newcommand{\BB}{{\mathbb B}}
      \newcommand{\CC}{{\mathbb C}}
      \newcommand{\NN}{{\mathbb N}}
      \newcommand{\DD}{{\mathbb D}}
      \newcommand{\RR}{{\mathbb R}}
      \newcommand{\FF}{{\mathbb F}}
      \newcommand{\cA}{{\mathcal A}}
      \newcommand{\cB}{{\mathcal B}}
      \newcommand{\cD}{{\mathcal D}}
      \newcommand{\cE}{{\mathcal E}}
      \newcommand{\cH}{{\mathcal H}}
      \newcommand{\cK}{{\mathcal K}}
      \newcommand{\cM}{{\mathcal M}}
      \newcommand{\cP}{{\mathcal P}}
      \newcommand{\cS}{{\mathcal S}}
      \newcommand{\cY}{{\mathcal Y}}
      \newcommand{\rank}{\hbox{\rm{rank}}\,}
      \newdimen\expt
      \def\boxit#1{\setbox0\hbox{$\displaystyle{#1}$}
            \hbox{\lower.4\expt
       \hbox{\lower3\expt\hbox{\lower\dp0
            \hbox{\vbox{\hrule height.4\expt
       \hbox{\vrule width.4\expt\hskip3\expt
            \vbox{\vskip3\expt\box0\vskip2\expt}%
       \hskip3\expt\vrule width.4\expt}\hrule height.4\expt}}}}}}
\begin{document}
       \pagestyle{myheadings}
      \markboth{ Gelu Popescu}{  Free biholomorphic classification of  noncommutative  domains }

      \title [      Free biholomorphic classification of   noncommutative  domains ]
      {            Free biholomorphic classification of  noncommutative  domains
      }
        \author{Gelu Popescu}
\date{January 10, 2008}
      \thanks{Research supported in part by an NSF grant}
      \subjclass[2000]{Primary:  46L52; 46T25.   Secondary: 46L40; 47A56}
      \keywords{Noncommutative function theory;  free holomorphic
      function;   biholomorphic classification; automorphism group;
      Berezin transform; Hardy algebra; domain algebra;
      Fock space.
}

      \address{Department of Mathematics, The University of Texas
      at San Antonio \\ San Antonio, TX 78249, USA}
      \email{\tt gelu.popescu@utsa.edu}

\begin{abstract} In this paper we develop a theory of free holomorphic functions on  noncommutative Reinhardt domains ${\bold
D}_f^m(\cH)\subset B(\cH)^n$, generated by positive regular free
holomorphic functions $f$ in $n$ noncommuting variables,  and  by positive integers $m\geq 1$, where
$B(\cH)$ is the algebra of all bounded linear operators on a Hilbert
space $\cH$. Noncommutative Berezin transforms are used  to study    Hardy algebras
 $H^\infty({\bf D}_{f,\text{\rm
rad}}^m)$  and  domain algebras $A({\bf D}_{f, \text{\rm rad}}^m)$ associated with ${\bold
D}_f^m(\cH)$, and   compositions of free holomorphic functions.

We obtain noncommutative   Cartan type results for
formal power series, in several noncommuting   indeterminates, which
leave invariant the nilpotent parts of the corresponding domains. As a consequence, we characterize  the set of all
   free biholomorphic functions $F:{\bf D}_f^m(\cH)\to{\bf D}_g^l(\cH)$
with $F(0)=0$.

  We show that the  free biholomorphic classification of the domains ${\bold
D}_f^m(\cH)$ is the same as the classification, up to unital
completely isometric isomorphisms having completely contractive hereditary extension,
 of the corresponding noncommutative domain algebras $A({\bf D}_{f, \text{\rm rad}}^m)$.
In particular, we prove that $\Psi:A({\bf D}_{f,\text{\rm
rad}}^1)\to A({\bf D}_{g,\text{\rm
rad}}^1)$ is a unital completely isometric  isomorphism  if and only if
 there is a free biholomorphic map  $\varphi \in Bih( {\bf D}_g^1,{\bf D}_f^1)$  such that
 $$
 \Psi(\chi)=\chi\circ \varphi,\qquad \chi\in A({\bf D}_{f,\text{\rm
rad}}^1).
 $$
 This implies that the noncommutative domains ${\bf D}_f^1(\cH)$ and  ${\bf
D}_g^1(\cH)$ are free biholomorphic equivalent if and only if   the
domain algebras $A({\bf D}_{f,\text{\rm rad}}^1)$  and $A({\bf
D}_{g,\text{\rm rad}}^1)$ are completely isometrically isomorphic.
 Using the interaction between the theory of functions in several complex variables and our noncommutative theory,
  we provide several results  concerning the free biholomorphic classification of  the noncommutative domains  ${\bold
D}_f^m(\cH)$ and  the classification, up to completely isometric  isomorphisms,
 of the  associated  noncommutative  domain (resp.~Hardy) algebras. In particular,
we  characterize the unit ball of $B(\cH)^n$ among the noncommutative
 domains ${\bf D}_f^m(\cH)$,  up to  free biholomorphisms.
 We  also obtain  characterizations for  the   unitarily implemented isomorphisms
  of noncommutative Hardy (resp.~domain) algebras in terms of free biholomorphic
  functions between the corresponding noncommutative domains.

\end{abstract}

      \maketitle

\bigskip

\section*{Contents}
{\it

\quad Introduction

\begin{enumerate}
   \item[1.]   Free holomorphic   functions on noncommutative Reinhardt  domains
   \item[2.] Noncommutative domain algebras and Hardy algebras
 \item[3.]  Compositions of free holomorphic functions
\item[4.]  Free biholomorphic functions and noncommutative Cartan type results
\item[5.]  Free biholomorphic classification of   noncommutative domains
\item[6.]  Isomorphisms of noncommutative Hardy algebras
   \end{enumerate}

\quad References

}

\bigskip

\section*{Introduction}

In recent years,  we have developed  a {\it noncommutative analytic function theory} on the open unit ball
$$
[B(\cH)^n]_1:=\{(X_1,\ldots, X_n)\in B(\cH)^n: \ \|X_1X_1^*+\cdots +X_nX_n^*\|<1\},
$$
where
$B(\cH)$ is the algebra of all bounded linear operators on a Hilbert
space $\cH$. We showed that
several classical
 results from complex analysis and  hyperbolic geometry  have
 free analogues in
 this  noncommutative multivariable setting (see
 \cite{Po-holomorphic}, \cite{Po-free-hol-interp},
  \cite{Po-pluri-maj},
 \cite{Po-pluriharmonic}, \cite{Po-hyperbolic}, \cite{Po-automorphism},  \cite{Po-holomorphic2}).  Related to our work, we
 mention  the papers \cite{DP2}, \cite{HKMS},   \cite{MuSo2},
 \cite{MuSo3},  \cite{Vo1}, and \cite{Vo2}, where several aspects of the theory
 of noncommutative analytic functions are considered in various
 settings.
 Some of the results  concerning  the noncommutative analytic function theory on the open unit ball $[B(\cH)^n]_1$ were extended   (see Chapter 2 from \cite{Po-domains})
  to free holomorphic functions on the interiors  of  noncommutative {\it ball-like  domains}
 $$
 {\bf D}^1_{p}(\cH):=\{X:=(X_1,\ldots, X_n)\in B(\cH)^n: \ \Phi_{p,X}(I)\leq I\},
 $$
 where $\Phi_{p,X}(Y):=\sum_{k=1}^\infty\sum_{|\alpha|=k} a_\alpha X_\alpha YX_\alpha^*$ for
  $Y\in B(\cH)$, and  $p=\sum_{k=1}^\infty\sum_{|\alpha|=k} a_\alpha X_\alpha$ is a
   positive regular noncommutative polynomial, i.e., its coefficients are positive
    scalars and $a_\alpha>0$ if  $\alpha\in \FF_n^+$ with $|\alpha|=1$.
Here,  $\FF_n^+$ is the unital free semigroup on $n$ generators
$g_1,\ldots, g_n$ and the identity $g_0$.  The length of $\alpha\in
\FF_n^+$ is defined by $|\alpha|:=0$ if $\alpha=g_0$  and
$|\alpha|:=k$ if
 $\alpha=g_{i_1}\cdots g_{i_k}$, where $i_1,\ldots, i_k\in \{1,\ldots, n\}$.
If $X:=(X_1,\ldots, X_n)\in B(\cH)^n$   we
denote $X_\alpha:= X_{i_1}\cdots X_{i_k}$  and $X_{g_0}:=I_\cH$.

In \cite{Po-Berezin}, we studied  more general noncommutative domains
$$
{\bold D}_p^m(\cH):=\left\{X:= (X_1,\ldots, X_n)\in B(\cH)^n: \
 (id-\Phi_{p,X})^k(I)\geq 0 \ \text{ for } \ 1\leq k\leq m\right\},
$$
where $m\geq 1$.
We showed that each  such a domain has a  universal model $(W_1,\ldots, W_n)$ of
{\it weighted creation operators} acting on the full Fock space $F^2(H_n)$  with $n$
generators. The  study of  ${\bf D}_p^m(\cH)$ is close related to
the study of the weighted shifts $W_1,\ldots,W_n$, their joint
invariant subspaces, and the representations of the algebras they
generate: the domain algebra $\cA_n({\bf D}_p^m)$, the Hardy algebra
$F_n^\infty({\bf D}_p^m)$, and the $C^*$-algebra $C^*(W_1,\ldots,
W_n)$.

The present paper is an attempt  to develop a noncommutative analytic function theory on the {\it radial parts}
 $$
 {\bf D}_{p,\text{\rm rad}}^m(\cH):=\bigcup_{0\leq r<1} r{\bf D}_{p}^m(\cH)
 $$
  of noncommutative {\it starlike   domains}  ${\bf D}_{p}^m(\cH)$, i.e., satisfying the condition $r{\bf D}_{p}^m(\cH)\subset {\bf D}_{p}^m(\cH)$ for any  $r\in [0,1)$. We mention that   ${\bold D}_p^1(\cH)$ is always a starlike domain  and ${\bf D}_{p,\text{\rm rad}}^1(\cH)$  is equal to the interior of ${\bold D}_p^1(\cH)$.
  We also remark that
if $q=X_1+\cdots+X_n$ and $m\geq 1$,  then ${\bold D}_q^m(\cH)$  is a starlike domain which coincides with  the set of all row contractions $(X_1,\ldots, X_n)\in [B(\cH)^n]_1^-$
satisfying the positivity condition
$$
\sum_{k=0}^m (-1)^k \left(\begin{matrix} m\\k\end{matrix}\right)\sum_{|\alpha|=k} X_\alpha X_\alpha^*\geq 0.
$$
The elements of  the domain ${\bold D}_q^m(\cH)$  can be seen as multivariable noncommutative analogues of Agler's $m$-hypercontractions \cite{Ag2}. In this case,  we also have that ${\bf D}_{q,\text{\rm rad}}^m(\cH)$  is the interior of ${\bold D}_q^m(\cH)$.

 We introduce   the class of free holomorphic functions
on the noncommutative {\it radial domain}  ${\bf D}_{p,\text{\rm rad}}^m$, $m\geq 1$.  A formal power series $F:=\sum_{\alpha\in \FF_n^+} c_\alpha Z_\alpha$,
 $c_{\alpha}\in \CC$,  in $n$ noncommuting indeterminates $Z_1,\ldots, Z_n$ is called
  free holomorphic function  (with
scalar coefficients) on
the noncommutative domain ${\bf D}_{p,\text{\rm rad}}^m $   if its representation on any Hilbert space
$\cH$, i.e.,  the  map
 $F:{\bf D}_{p,\text{\rm rad}}^m(\cH)\to B(\cH)$
 given  by
$$
F(X_1,\ldots, X_n):=\sum_{k=0}^\infty \sum_{|\alpha|=k}
c_\alpha X_\alpha, \qquad  (X_1,\ldots, X_n)\in {\bf
D}_{p,\text{\rm rad}}^m(\cH),
$$
is well-defined  in the operator norm topology. The  map $F$ is
called
  {\it free
holomorphic function} on ${\bf D}_{p,\text{\rm rad}}^m(\cH)$. We
remark that if $m=1$ and $p=X_1+\cdots +X_n$, then our definition coincides  with
the one for free holomorphic functions (see
\cite{Po-holomorphic}) on the open unit ball  of $B(\cH)^n$.

There is  an important connection between the theory of free
holomorphic functions on noncommutative domains ${\bold
D}_{p,\text{\rm rad}}^m(\cH)$ and the theory of holomorphic
functions on the corresponding domains  $Int({\bold
D}_p^m(\CC^s))\subset \CC^{ns^2}$,
 $s\in \NN$, where {\it Int} denotes the interior. This is due to the fact that the
representation of any free holomorphic function on a finite
dimensional  space ($\cH=\CC^s)$ is  a holomorphic function  in the
classical sense. This  connection will allow us to use  the theory
of functions in several complex variables  (\cite{Kr}, \cite{Su},
\cite{Th}) and our noncommutative theory in our attempt  to classify
the  domains ${\bold D}_p^m(\cH)$ and  the corresponding
noncommutative  domain (resp. Hardy) algebras (see   Theorem
\ref{rep-finite3}, Theorem \ref{Thullen}, and  Corollary
\ref{Thullen2}). As  in the case of biholomorphic maps between domains in
$\CC^n$, $n>1$, (see \cite{IK}, \cite{Su}), we expect a great deal of rigidity for free biholomorphic functions between noncommutative domains.

In Section 1,  we recall  basic
facts concerning the noncommutative Berezin transforms (\cite{Po-Berezin}) associated
with  noncommutative domains ${\bold D}_p^m(\cH)$, $m\geq 1$, and
characterize the algebra $Hol_\cE({\bf
D}_{p,\text{\rm rad}}^m)$
 of all free holomorphic functions on  ${\bf D}_{p,\text{\rm
rad}}^m$, with
 operator-valued coefficients in $B(\cE)$.

In Section 2,
we introduce the  Hardy algebra
$H^\infty({\bf D}_{p,\text{\rm rad}}^m)$ and  the domain algebra
$A({\bf D}_{p,\text{\rm rad}}^m)$ associated with the noncommutative domain ${\bf D}_{p,\text{\rm rad}}^m$, $m\geq1$.
Let $H^\infty({\bf D}_{p,\text{\rm
rad}}^m)$  denote the set of  all elements $\varphi$ in $Hol({\bf
D}_{p,\text{\rm rad}}^m)$     such that
$\|\varphi\|_\infty:= \sup \|\varphi(X)\|<\infty,
$
where the supremum is taken over all $n$-tuples $X\in {\bf D}_{p,\text{\rm rad}}^m(\cH)$ and $\cH$ is a separable
infinite dimensional Hilbert space. We  denote by  $A({\bf
D}_{p,\text{\rm rad}}^m)$   the set of all   $\varphi$
  in $Hol({\bf D}_{p,\text{\rm
rad}}^m)$   such that the mapping
${\bf
D}_{p,\text{\rm rad}}^m(\cH)\ni X\mapsto
\varphi(X)\in B(\cH)$
 has a continuous extension to  $[{\bf
D}_{p,\text{\rm rad}}^m(\cH)]^-={\bf
D}_{p}^m(\cH)$.
It turns out that  $H^\infty({\bf
D}_{p,\text{\rm rad}}^m)$ and  $A({\bf
D}_{p,\text{\rm rad}}^m)$ are   Banach algebras under pointwise
multiplication and the norm $\|\cdot \|_\infty$.
Using noncommutative Berezin transforms (\cite{Po-Berezin}), we identify the  noncommutative algebra
$F_n^\infty({\bf D}^m_p)$ and  the  algebra $\cA_n({\bf D}^m_p)$ with the  Hardy algebra
$H^\infty({\bf
D}_{p,\text{\rm rad}}^m)$ and  the domain algebra $A({\bf D}_{p,\text{\rm rad}}^m)$,
 respectively (see Theorem \ref{f-infty} and Theorem \ref{A-infty}). We recall
  (\cite{Po-Berezin}) that the Banach algebra $F_n^\infty({\bf D}^m_p)$ is
  the sequential WOT-(resp. w*-)closure of all polynomials in $W_1,\ldots, W_n,$
  and the identity, while
$\cA_n({\bf D}^m_p)$ is the norm closed algebra generated by the
same operators. Hardy algebras  of  bounded free holomorphic functions with
 operator-valued coefficients are also discussed.

In Section 3, we present several results concerning the composition
of free holomorphic functions on noncommutative domains ${\bf
D}_{p,\text{\rm rad}}^m$. These results are used, throughout this
paper,  to study free biholomorphic functions. Let $p$ and $g$ be
positive regular noncommutative polynomials with $n$ and $q$
indeterminates, respectively,  and let $m,l\geq 1$. A map $F:{\bf
D}_p^m(\cH)\to {\bf D}_g^l(\cH)$ is called free biholomorphic
function  if $F$ is a homeomorphism  in the operator norm topology
and $F|_{{\bf D}_{p,\text{\rm rad}}^m(\cH)}$ and $F^{-1}|_{{\bf
D}_{g,\text{\rm rad}}^l(\cH)}$ are   free homolorphic functions. In this case,  the domains
${\bf
D}_p^m(\cH)$ and  ${\bf D}_g^l(\cH)$ are called free biholomorphic equivalent.

 In  Section 4, we obtain  a noncommutative   Cartan (\cite{Ca}) type result
  (see Theorem \ref{Cartan3}) characterizing  the set $Bih_0({\bf D}_p^m, {\bf D}_g^l)$ of all
   free biholomorphic functions $F:{\bf D}_p^m(\cH)\to{\bf D}_g^l(\cH)$
with $F(0)=0$. Several consequences  concerning the biholomorphic
classification of  noncommutative domains are obtained. In
particular, we characterize
 the unit ball of $B(\cH)^n$ among the noncommutative domains ${\bf D}_p^m(\cH)$,
  up to  free biholomorphisms.

In Section 5, we present  the main result of this paper  (see Theorem \ref{auto-disk})
 which   shows that the  free biholomorphic classification of the noncommutative starlike
 domains ${\bold
D}_p^m(\cH)\subset B(\cH)^n$, generated by positive regular noncommutative polynomials $p$
 and  positive integers $m\geq 1$, is the same as the classification, up to unital
  completely isometric isomorphisms having completely contractive hereditary extension, of the corresponding noncommutative domain
  algebras $\cA_n({\bf D}_p^m)\simeq
 A({\bf D}_{p, \text{\rm rad}}^m)$.
We show that   $\Psi:A({\bf D}_{p,\text{\rm
rad}}^m)\to A({\bf D}_{g,\text{\rm
rad}}^l)$ is a unital completely isometric  isomorphism   having  completely
contractive hereditary extension if and only if
 there is a free biholomorphic map  $\varphi \in Bih( {\bf D}_g^l,{\bf D}_p^m)$
 such that
 $$
 \Psi(\chi)=\chi\circ \varphi,\qquad \chi\in A({\bf D}_{p,\text{\rm
rad}}^m).
 $$
 In particular, we prove that  the noncommutative domains ${\bf D}_p^1(\cH)$ and  ${\bf
D}_g^1(\cH)$ are free biholomorphic equivalent if and only if   the
domain algebras  $\cA_n({\bf D}_p^1)$  and $\cA_q({\bf D}_g^1)$ are
completely isometrically isomorphic.
 We mention that if $m=l=1$ and  $p=g=X_1+\cdots + X_n$,  we recover
  (with a different proof) the corresponding result from \cite{Po-automorphism}.
  In the particular case when $n=m=l=1$ and $p=g=X$, we obtain the  well-known  result
  (\cite{H}) that the automorphisms of the disc algebra
$A(\DD)$, of all bounded analytic functions on the open unit disc
$\DD:=\{z\in \CC:\ |z|<1\}$ with continuous extensions to the closed
disc, are the maps
$
\Theta_\tau (\varphi)=\varphi\circ\tau$, \  $\varphi\in A (\DD),
$
 where
$\tau$ is a conformal automorphism of the unit disc $\DD$.

 Combining the  main result of this paper with the Cartan type results   from Section 4, we
  obtain a characterization of    all  isomorphisms $\Psi:A({\bf D}_{p,\text{\rm
rad}}^m)\to A({\bf D}_{g,\text{\rm
rad}}^l)$  with $\varphi(0)=0$.
 We mention that, in the particular case when $m=l=1$, our result strengthens and provides
 a converse of  the main result of   Arias and Latr\' emoli\` ere  (see  Theorem 3.18 from
 \cite{ArL}), who  proved
   that if  there is a completely isometric isomorphism between two noncommutative domain
   algebras  $\cA_n({\bf D}_p^1)$ and  $\cA_n({\bf D}_g^1)$ whose dual map fixes the origin,
   then the algebras are related by  a linear  relation of  their generators.

   In Section 6,
 we  characterize the
   unitarily implemented   isomorphisms  between  noncommutative   Hardy algebras
 $H^\infty({\bf D}_{p,\text{\rm
rad}}^m)$ and  $H^\infty({\bf D}_{g,\text{\rm rad}}^l)$ in terms of
free biholomorphic
  functions between the corresponding noncommutative domains. Similar
results are deduced for noncommutative domain algebras. In
particular, when $m=1$,  we prove that the group $Aut_{u}(
\cA_n({\bf D}_p^1)))$ of all unitarily implemented automorphisms of
the noncommutative domain algebra  $\cA_n({\bf D}_p^1)$ is
isomorphic to the group $Aut_w({\bf D}_{p}^1)$ of all free
biholomorphic functions $\varphi\in Aut({\bf D}_{p}^1)$ with the
property that the model boundary function $\widetilde\varphi$ is a
pure $n$-tuple, i.e., $\text{\rm SOT-}\lim_{k\to
\infty}\Phi_{p,\widetilde \varphi}^k(I)=0$, with
  $\text{\rank}\,[I-\Phi_{p,\widetilde \varphi}(I)]=1$ and the
  characteristic function $\Theta_{\widetilde\varphi}=0$.

 We should mention that the results of this paper are
presented in a
 more general setting, namely, when the polynomials $p$ and $g$ are
  replaced by positive regular free holomorphic functions on open balls
  $[B(\cH)^n]_\gamma$, $\gamma>0$.

\bigskip

\section{Free holomorphic   functions on noncommutative Reinhardt  domains}

We begin by   setting up the notation and  by recalling some basic
facts concerning the noncommutative Berezin transforms associated
with  noncommutative domains ${\bold D}_f^m(\cH)$, $m\geq 1$. We
characterize the algebra
 of all free holomorphic functions on  ${\bf D}_{f,\text{\rm
rad}}^m$, with
 operator-valued coefficients, and discuss the connection between the theory of free holomorphic
functions on noncommutative domains ${\bf D}_{f,\text{\rm rad}}^m$
  and the theory of holomorphic functions on domains in
$\CC^d$.

Let $f:= \sum_{\alpha\in \FF_n^+} a_\alpha X_\alpha$, \ $a_\alpha\in
\CC$,  be a free holomorphic function on a ball $[B(\cH)^n]_\gamma$
for some $\gamma>0$.
  As shown
in \cite{Po-holomorphic}, this condition is equivalent  to
\begin{equation*}
\limsup_{k\to\infty} \left( \sum_{|\alpha|=k}
|a_\alpha|^2\right)^{1/2k}<\infty.
\end{equation*}
Throughout this paper, we
  assume that  $a_\alpha\geq 0$ for any $\alpha\in \FF_n^+$, \ $a_{g_0}=0$,
 \ and  $a_{g_i}>0$, $i=1,\ldots, n$.
 A function $f$ satisfying  all these conditions on the coefficients is
 called a {\it positive regular free holomorphic function}.
 We denote by $id$ the identity map acting on
 the algebra of all bounded linear operators an a Hilbert space.
Given $m,n\in \NN:=\{1,2,\ldots\}$ and  a positive regular free
holomorphic
  function $f:=\sum_{\alpha\in \FF_n^+, |\alpha|\geq 1} a_\alpha X_\alpha$,
  we define  the noncommutative domain
$$
{\bold D}_f^m(\cH):=\left\{X:= (X_1,\ldots, X_n)\in B(\cH)^n: \
 (id-\Phi_{f,X})^k(I)\geq 0  \ \text{ for } \ 1\leq k\leq m\right\},
$$
where $\Phi_{f,X}:B(\cH)\to B(\cH)$ is defined by $
\Phi_{f,X}(Y):=\sum_{k=1}^\infty\sum_{|\alpha|=k} a_\alpha X_\alpha
YX_\alpha^*$, \  $Y\in B(\cH), $ and the convergence is in the week
operator topology.  ${\bold D}_f^m(\cH)$ can be seen as a
noncommutative Reinhardt domain, i.e., $(e^{i\theta_1}X_1,\ldots,
e^{i\theta_n}X_n)\in {\bold D}_f^m(\cH)$ for any $(X_1,\ldots,
X_n)\in {\bold D}_f^m(\cH)$ and $\theta_1,\ldots, \theta_n\in \RR$.
We say that $\Phi_{f,X}$ is power bounded if there exists a constant
$M>0$ such that $\|\Phi_{f,X}^k\|\leq M$ for any $k\in \NN$. In
particular, this is the case if $\Phi_{f,X}(I)\leq I$.
  We recall (see
\cite{Po-Berezin}, Lemma 1.4) the following result. If $\Phi_{f,X}$
is  a power bounded positive map and
 $m\in \NN$,
then
$$(id-\Phi_{f,X})^m(I)\geq 0\quad \text{ if and only if }\quad  (id-\Phi_{f,X})^k(I)\geq
0, \quad k=1,2,\ldots,m.
$$
Note that if $(X_1,\ldots, X_n)\in {\bold D}_f^m(\cH)$ then
\begin{equation}\label{bound}
\|X_1X_1^*+\cdots +X_nX_n^*\|\leq \frac{1}{\min\{ {a_\alpha}: \
|\alpha|=1\}}.
\end{equation}
Indeed, since $\Phi_{f,X}(I)\leq I$ and $ a_\alpha\geq 0$, we deduce
that
$$
\min\{ {a_\alpha}: \ |\alpha|=1\}(X_1X_1^*+\cdots +X_nX_n^*)\leq
\sum_{|\alpha|\geq 1} a_\alpha X_\alpha X_\alpha^*\leq I.
$$

 Throughout this paper, unless otherwise specified,  we assume that $\cH$ is a separable
infinite dimensional Hilbert space and  ${\bf D}_f^m (\cH)$  is a
closed (in the operator norm topology)  starlike domain, i.e.,
$(rX_1,\ldots, rX_n)\in {\bf D}_f^m (\cH)$ for any
    $n$-tuple $ (X_1,\ldots,
X_n)\in {\bf D}_f^m(\cH)$ and $r\in [0,1)$. In particular, ${\bf
D}_f^m(\cH)$ is path connected to $0$. Note that if $m=1$, then
${\bf D}_f^1(\cH)$ is always a closed starlike domain. This case was
extensively studied in \cite{Po-domains}. When $m\geq 2$, we  point
out  the following class of noncommutative starlike domains.

\begin{example}$($\cite{Po-Berezin}$)$\label{ex-radial}
If $p(X_1,\ldots, X_n):=a_1X_1+\cdots +a_n X_n$, $a_i>0$, then ${\bf
D}_p^m(\cH)$, $m\in \NN$, is a closed starlike domain.
\end{example}

Now, we recall ( \cite{Po-Berezin}, \cite{Po-domains}) a few facts
concerning the noncommutative Berezin transforms associated with
noncommutative domains ${\bold D}_f^m(\cH)$, $m\geq 1$. Let $H_n$ be
an $n$-dimensional complex  Hilbert space with orthonormal
      basis
      $e_1$, $e_2$, $\dots,e_n$, where $n\in\NN$, or $n=\infty$.
       Consider the full Fock space  of $H_n$ defined by
      $$F^2(H_n):=\CC1\oplus \bigoplus_{k\geq 1} H_n^{\otimes k},$$
      where  $H_n^{\otimes k}$ is the (Hilbert)
      tensor product of $k$ copies of $H_n$. We denote
$e_\alpha:= e_{i_1}\otimes\cdots \otimes  e_{i_k}$ if
$\alpha=g_{i_1}\cdots g_{i_k}\in \FF_n^+$, where $i_1,\ldots, i_k\in
\{1,\ldots, n\}$, and $e_{g_0}:=1$. Note that
$\{e_\alpha\}_{\alpha\in \FF_n^+}$ is an orthonormal basis for
$F^2(H_n)$. Define the left   creation
      operators  $S_i$, $i=1,\ldots,n$, acting on $F^2(H_n)$  by
      setting
      $$
       S_i\varphi:=e_i\otimes\varphi, \qquad  \varphi\in F^2(H_n).
      $$
Let  $D_i:F^2(H_n)\to F^2(H_n)$, $i=1,\ldots, n$,  be   the diagonal
operators  given by
$$
D_ie_\alpha:=\sqrt{\frac{b_\alpha^{(m)}}{b_{g_i \alpha}^{(m)}}}
e_\alpha,\qquad
 \alpha\in \FF_n^+,
$$
where
\begin{equation}
\label{b-al} b_{g_0}^{(m)}:=1\quad \text{ and } \quad
 b_\alpha^{(m)}:= \sum_{j=1}^{|\alpha|}
\sum_{{\gamma_1\cdots \gamma_j=\alpha }\atop {|\gamma_1|\geq
1,\ldots, |\gamma_j|\geq 1}} a_{\gamma_1}\cdots a_{\gamma_j}
\left(\begin{matrix} j+m-1\\m-1
\end{matrix}\right)  \qquad
\text{ if } \ \alpha\in \FF_n^+, |\alpha|\geq 1.
\end{equation}
Since  $a_{g_i}>0$, Lemma 1.2 from \cite{Po-Berezin} implies
$$
b_{g_i\alpha}^{(m)}\geq
\sum\limits_{{\gamma\sigma=g_i\alpha}\atop{\sigma\in \FF_n^+,
 |\gamma|\geq 1}}
b_\sigma^{(m)} a_\gamma\geq a_{g_i} b_\alpha^{(m)}, \qquad
i=1,\ldots,n,$$
 and, consequently,
$$
\|D_i\|=\sup_{\alpha\in \FF_n^+} \sqrt{\frac{b_\alpha^{(m)}}{b_{g_i
\alpha}^{(m)}}}\leq \frac{1}{\sqrt{a_{g_i}}}, \qquad i=1,\ldots,n.
$$

We define the {\it weighted left creation  operators}
$W_i:F^2(H_n)\to F^2(H_n)$, $i=1,\ldots, n$,  associated with the
noncommutative domain  ${\bold D}_f^m $  by setting $W_i:=S_iD_i$,
where
 $S_1,\ldots, S_n$ are the left creation operators on the full
 Fock space $F^2(H_n)$.
 Note that
\begin{equation} \label{w-shift}
W_i e_\alpha=\frac {\sqrt{b_\alpha^{(m)}}}{\sqrt{b_{g_i
\alpha}^{(m)}}} e_{g_i \alpha}, \qquad \alpha\in \FF_n^+,
\end{equation}
where the coefficients  $b_\alpha^{(m)}$, $\alpha\in \FF_n^+$, are
given by relation \eqref{b-al}.
 When necessary, we  denote the weighted left
creation operators $(W_1,\ldots, W_n)$ associated with ${\bf
D}_f^{m}$  by $(W_1^{(f)},\ldots, W_n^{(f)})$. Using relation
\eqref{w-shift},  one can easily see that
\begin{equation}\label{WbWb}
W_\beta e_\gamma= \frac {\sqrt{b_\gamma^{(m)}}}{\sqrt{b_{\beta
\gamma}^{(m)}}} e_{\beta \gamma} \quad \text{ and }\quad W_\beta^*
e_\alpha =\begin{cases} \frac
{\sqrt{b_\gamma^{(m)}}}{\sqrt{b_{\alpha}^{(m)}}}e_\gamma& \text{ if
}
\alpha=\beta\gamma \\
0& \text{ otherwise }
\end{cases}
\end{equation}
 for any $\alpha, \beta \in \FF_n^+$.
According to Theorem 1.3 from \cite{Po-Berezin}, the weighted left
creation
  operators $W_1,\ldots, W_n$     associated with ${\bold D}_f^m $  have the following properties:
 \begin{enumerate}
 \item[(i)] $\sum_{k=1}^\infty\sum_{|\beta|=k} a_\beta W_\beta W_\beta^*\leq I$, where the
convergence is in the strong operator topology;
 \item[(ii)]
 $\left(id-\Phi_{f,W}\right)^{m}(I)=P_\CC$, where $P_\CC$ is the
 orthogonal projection from $F^2(H_n)$ onto $\CC 1\subset F^2(H_n)$, and
  $\lim\limits_{p\to\infty} \Phi^p_{f,W}(I)=0$ in the strong operator
 topology.
 \end{enumerate}
Consequently, we deduce that   $(W_1,\ldots, W_n)\in {\bf D}_f^m
(F^2(H_n))$.   In \cite{Po-Berezin}, we proved  that the $n$-tuple
$(W_1,\ldots, W_n)$ plays the role of universal model for the
noncommutative domain ${\bf D}_f^m $. We also introduced
(\cite{Po-domains}, \cite{Po-Berezin}) the domain algebra
$\cA_n({\bf D}^m_f)$ associated with the noncommutative domain ${\bf
D}^m_f$ to be the norm closure of all polynomials in $W_1,\ldots,
W_n$, and the identity.

Following \cite{Po-von}, \cite{Po-funct},  the noncommutative Hardy
algebra $F_n^\infty({\bf D}^m_f)$ was defined  (\cite{Po-domains},
\cite{Po-Berezin}) as follows. Let $\varphi(W_1,\ldots,
W_n)=\sum_{\beta\in \FF_n^+} c_\beta W_\beta $ be   a formal Fourier
representation  such   that $\sum_{\beta\in \FF_n^+} |c_\beta|^2
\frac{1}{b_\beta^{(m)}}<\infty$, where the coefficients
$b_\beta^{(m)}$, $\beta\in \FF_n^+$, are given by relation
 \eqref{b-al}. Then
$\sum_{\beta\in \FF_n^+} c_\beta W_\beta (p)\in F^2(H_n)$ for any
$p\in \cP$, where $\cP$ is the set of all polynomial in $F^2(H_n)$.
If, in addition,
$$
\sup_{p\in\cP, \|p\|\leq 1} \left\|\sum\limits_{\beta\in \FF_n^+}
c_\beta W_\beta (p)\right\|<\infty,
$$
  then there
is a unique bounded operator acting on $F^2(H_n)$, which we denote
also  by $\varphi(W_1,\ldots, W_n)$, such that
$$
\varphi(W_1,\ldots, W_n)p=\sum\limits_{\beta\in \FF_n^+} c_\beta
W_\beta (p),\qquad   p\in \cP.
$$
The set of all operators $\varphi(W_1,\ldots, W_n)\in B(F^2(H_n))$
satisfying the above-mentioned properties is denoted by
$F_n^\infty({\bf D}^m_f)$. We proved in \cite{Po-Berezin} that the
Banach algebra  $F_n^\infty({\bf D}^m_f)\subset B(F^2(H_n))$  is the
sequential SOT-(resp.~WOT-, $w^*$-) closure of all polynomials in
$W_1,\ldots, W_n$, and the identity.

In the particular case when $m=1$ and $q=X_1+\cdots + X_n$, the
algebras $\cA_n({\bf D}^1_q)$  and $F_n^\infty({\bf D}^1_q)$
coincide with the  noncommutative disc algebra $\cA_n$  and the
noncommutative analytic Toeplitz algebra $F_n^\infty$, respectively,
which  were introduced in \cite{Po-von}(see also \cite{Po-funct}, \cite{Po-poisson}) in connection with a
noncommutative von Neumann  type inequality and have been
extensively studied in the last two decades. For the classical von Neumann inequality we refer  the reader to \cite{von}, \cite{SzF-book},  \cite{Pi},  and \cite{Pa-book}.

 The joint spectral radius of an $n$-tuple  $T:=(T_1,\ldots, T_n)\in {\bf
D}_f^m(\cH)$ is defined by
$$
r_f(T_1,\ldots, T_n):=\lim_{k\to\infty}\|\Phi_{f,T}^k(I)\|^{1/2k},
$$
where $\Phi_{f,X}:B(\cH)\to B(\cH)$ is given by $
\Phi_{f,X}(Y):=\sum_{k=1}^\infty\sum_{|\alpha|=k} a_\alpha X_\alpha
YX_\alpha^*$, \  $Y\in B(\cH)$. For each $T:=(T_1,\ldots, T_n)\in
{\bf D}_f^m(\cH)$ with $r_f(T_1,\ldots, T_n)<1$, we introduced (see
\cite{Po-Berezin})  the {\it noncommutative Berezin transform}  at
$T$ as the map  ${\bf B}_T:B(F^2(H_n)) \to B(\cH)$ defined by

\begin{equation*}
\left<{\bf B}_T[g]x,y\right>:= \left<\left( I-\sum_{|\alpha|\geq 1}
\overline{a}_{\tilde\alpha} \Lambda_\alpha^* \otimes
T_{\tilde\alpha} \right)^{-m}
 (g\otimes \Delta_{f,m,T}^2)
  \left(
I-\sum_{|\alpha|\geq 1} a_{\tilde\alpha} \Lambda_\alpha \otimes
T_{\tilde\alpha}^* \right)^{-m}(1\otimes x), 1\otimes y\right>
\end{equation*}
  where $g\in B(F^2(H_n))$, $\Delta_{f,m,T}:= [(id-\Phi_{f,T})^m(I)]^{1/2}$ and $x,y\in \cH$.
  We should add that the
$n$-tuple $(\Lambda_1,\ldots, \Lambda_n)$  is  the universal model
associated with the noncommutative domain
 ${\bold D}_{\widetilde f}^m $, where  $\widetilde
f= \sum a_{\tilde \alpha} Z_\alpha$ and $\widetilde \alpha:=
g_{i_k}\cdots g_{i_1}$ denotes the reverse of $\alpha =g_{i_1}\cdots
g_{i_k}\in \FF_n^+$.
    We remark that in the particular case when $n=1$, $m=1$, $f=Z$,
     $\cH=\CC$,
 and $T=\lambda\in \DD$, we recover the Berezin transform  \cite{Be} of a bounded
linear operator on the Hardy space $H^2(\DD)$, i.e.,
$$
{\bf B}_\lambda [g]=(1-|\lambda|^2)\left<g k_\lambda,
k_\lambda\right>,\qquad g\in B(H^2(\DD)),
$$
where $k_\lambda(z):=(1-\overline{\lambda} z)^{-1}$ and  $z,
\lambda\in \DD$. The noncommutative Berezin  transform  will play an
important role in this paper.

The {\it extended noncommutative Berezin transform}
$\widetilde{\bf B}_T$
 at  $T\in {\bf D}_f^m(\cH)$ is defined  by
 \begin{equation}
 \label{def-Be2}
 \widetilde{\bf B}_T[g]:= {K_{f,T}^{(m)}}^* (g\otimes I_\cH)K_{f,T}^{(m)},
 \qquad g\in B(F^2(H_n)),
 \end{equation}
where the {\it noncommutative Berezin kernel} \  $K_{f,T}^{(m)}:\cH
\to F^2(H_n)\otimes \overline{\Delta_{f,m,T}(\cH)}$  is given
  by
 \begin{equation*}
 K_{f,T}^{(m)}h=\sum_{\alpha\in \FF_n^+} \sqrt{b^{(m)}_\alpha}
e_\alpha\otimes \Delta_{f,m,T} T_\alpha^* h,\qquad h\in \cH.
\end{equation*}
We recall that
 the noncommutative Berezin transforms $\widetilde{\bf B}_T$ and
  ${\bf B}_T$ coincide for any $n$-tuple of operators
   $T:=(T_1, \ldots, T_n)\in{\bf D}_f^m(\cH)$ with joint spectral radius
   $r_f(T_1,\ldots, T_n)<1$. In the particular case when $m=1$ and $f=X_1+\cdots +X_n$, the extended Berezin transform coincides with the noncommutative Poisson transform
   (see \cite{Po-poisson}, \cite{Po-curvature}).

Let $T:=(T_1,\ldots, T_n)$ be an $n$-tuple of operators  in the
 noncommutative starlike domain $ {\bf D}_f^m(\cH)$   and let
  $\cS:=\overline{\text{\rm  span}} \{ W_\alpha W_\beta^*;\
\alpha,\beta\in \FF_n^+\}$.
   Then there is
    a unital completely contractive linear map
$ \cB_T:\cS \to B(\cH)$, called the {\it noncommutative Berezin
transform} at $T$  on the operator system $\cS$,  such that
\begin{equation}\label{BT}
\cB_T(g)=\lim_{r\to 1} {\bf B}_{rT}[g],\qquad g\in \cS,
\end{equation}
where the limit exists in the operator norm topology of $B(\cH)$,
and
$$
\cB_T(W_\alpha W_\beta^*)=T_\alpha T_\beta^*, \qquad \alpha,\beta\in
\FF_n^+. $$ In particular,  we have the following von Neumann type
inequality:
\begin{equation}
\label{vn1} \left\|\sum_{\alpha,\beta\in \Lambda} T_\alpha
T_\beta^*\otimes C_{\alpha,\beta}\right\| \leq
\left\|\sum_{\alpha,\beta\in \Lambda}
 W_\alpha W_\beta^*\otimes C_{\alpha,\beta}\right\|
 \end{equation}
 for any
  finite  set $\Lambda\subset \FF_n^+$ and $C_{\alpha,\beta}\in
  B(\cE)$, where $\cE$ is a Hilbert space. Moreover, the restriction of
$\cB_T$ to the domain algebra $\cA_n({\bf D}_f^m)$ is a
completely contractive homomorphism.

After these preliminaries on noncommutative Berezin transforms, we
are ready to
 introduce   the class of free holomorphic functions
on the noncommutative radial domain  ${\bf D}_{f,\text{\rm rad}}^m$,
with operator-valued coefficients.

For $0<r<1$, we define  the  domain ${\bf D}_{f,r}^m(\cH)\subset
B(\cH)^n$ by setting
$$
{\bf D}_{f,r}^m(\cH):=\left\{ (Y_1,\ldots, Y_n)\in B(\cH)^n:\
\left(\frac{1}{r} Y_1,\ldots,\frac{1}{r}Y_n\right)\in {\bf
D}_{f}^m(\cH)\right\}=r{\bf D}_{f}^m(\cH).
$$
  Since ${\bf
D}_{f}^m(\cH)$ is a starlike domain, we have ${\bf
D}_{f,r}^m(\cH)\subset {\bf D}_{f}^m(\cH)$ for any $r\in [0,1)$.
Consider  the radial domain
$$
{\bf D}_{f,\text{\rm rad}}^m(\cH):=\bigcup_{0\leq r<1}{\bf
D}_{f,r}^m(\cH)=\bigcup_{0\leq r<1} r{\bf D}_{f}^m(\cH).
$$
Since, throughout the paper,  ${\bf D}_f^m (\cH)$  is  assumed to be
closed in the operator norm topology,  we have $ {\bf
D}_{f,\text{\rm rad}}^m(\cH)^- ={\bf D}_f^m (\cH).$ Note that the
interior  of ${\bf D}_f^m (\cH)$, which we denote by $Int({\bf
D}_f^m (\cH))$, is a subset of ${\bf D}_{f,\text{\rm rad}}^m(\cH)$.
In particular, if $q$ is any positive regular noncommutative
polynomial, then
$$
Int({\bf D}_q^1 (\cH))={\bf D}_{q,\text{\rm rad}}^1(\cH)\quad \text{
and } \quad \overline{Int({\bf D}_q^1 (\cH))}={\bf D}_q^1 (\cH).
$$
A similar result holds, for example,  for the domains ${\bf D}_f^m
(\cH)$ if $m\geq 1$ and $f=\sum_{i=1}^n a_iX_i$ with $a_i>0$.

 Let $\cE $ be  a separable
Hilbert space. A formal power series $G :=\sum_{\alpha\in \FF_n^+}
Z_\alpha\otimes C_{(\alpha)}$, where $C_{(\alpha)}\in B(\cE)$,
  is called
  free holomorphic function on
the noncommutative domain ${\bf D}_{f,\text{\rm rad}}^m $,   with
coefficients in $B(\cE )$, if its representation on any Hilbert
space $\cH$, i.e.,  the  mapping
 $G:{\bf D}_{f,\text{\rm rad}}^m(\cH)\to B(\cH)\bar\otimes_{min} B(\cE)$
 given  by
$$
G(X_1,\ldots, X_n):=\sum_{k=0}^\infty \sum_{|\alpha|=k}
X_\alpha\otimes C_{(\alpha)}, \qquad  (X_1,\ldots, X_n)\in {\bf
D}_{f,\text{\rm rad}}^m(\cH),
$$
is well-defined  in the operator norm topology. The  map $G$ is
called
  {\it free
holomorphic function} on ${\bf D}_{f,\text{\rm rad}}^m(\cH)$. We
remark that if $m=1$ and $f=X_1+\cdots +X_n$, then our definition
coincides with the one for free holomorphic functions (see
\cite{Po-holomorphic}) on the open unit ball  of $B(\cH)^n$.

Our first result is the following characterization of free
holomorphic functions on the noncommutative domain ${\bf
D}_{f,\text{\rm rad}}^m$, with operator-valued coefficients.

\begin{theorem}\label{free-ho}
Let $G(Z_1,\ldots, Z_n):=\sum_{\alpha\in \FF_n^+}
   Z_\alpha\otimes C_{(\alpha)}$ be a  formal power series with operator-valued
coefficients  \  $C_{(\alpha)}\in B(\cE)$,
   and let $\cH$ be a separable infinite dimensional Hilbert space.
Then the following statements are equivalent:
\begin{enumerate}
\item [(i)] $G$ is a  free holomorphic function on
the noncommutative domain ${\bf D}_{f,\text{\rm rad}}^m$.
\item[(ii)] For  any  $ r\in
[0,1)$, the series
$$
 G(rW_1,\ldots, rW_n):=\sum_{k=0}^\infty  \sum_{|\alpha|=k}
r^{|\alpha|}  W_\alpha\otimes C_{(\alpha)}
$$
is convergent in the operator norm topology,  where $(W_1,\ldots,
W_n)$ is the universal model associated with the noncommutative
domain ${\bf D}_f^m $.
\item[(iii)] The inequality
$$
 \limsup\limits_{k\to\infty}
\left\|\sum_{|\alpha|=k}
 \frac{1}{b_\alpha^{(m)}}C_{(\alpha)}^* C_{(\alpha)}\right\|^{\frac{1} {2k}}\leq 1,
 $$
 holds,
where the coefficients $b_\alpha^{(m)}$   are given by relation
\eqref{b-al}.
\item[(iv)] For  any  $ r\in
[0,1)$, the series
$
 \sum_{k=0}^\infty \left\|\sum_{|\alpha|=k}
r^{|\alpha|}   W_\alpha\otimes C_{(\alpha)}\right\| $ is convergent.
\item[(v)] For  any $(X_1,\ldots, X_n)\in {\bf D}_{f,\text{\rm
rad}}^m(\cH)$, the series
$
 \sum_{k=0}^\infty \left\|\sum_{|\alpha|=k}
    X_\alpha\otimes C_{(\alpha)}\right\|
$ is convergent.
\end{enumerate}
\end{theorem}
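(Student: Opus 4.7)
The plan is to establish the cyclic chain (i)$\Rightarrow$(ii)$\Rightarrow$(iii)$\Rightarrow$(iv)$\Rightarrow$(v)$\Rightarrow$(i), with (iii)$\Rightarrow$(iv) as the main step. For (i)$\Rightarrow$(ii): since $(W_1,\ldots,W_n)\in {\bf D}_f^m(F^2(H_n))$ by the universal model property and ${\bf D}_f^m$ is starlike, $(rW_1,\ldots,rW_n)\in r{\bf D}_f^m(F^2(H_n))\subset {\bf D}_{f,\text{\rm rad}}^m(F^2(H_n))$ for every $r\in[0,1)$, so the definition of free holomorphicity gives operator-norm convergence at this point. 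For (ii)$\Rightarrow$(iii), I would evaluate $G_k(W)$ on the vacuum $1\otimes x$: using $W_\alpha 1=(b_\alpha^{(m)})^{-1/2}e_\alpha$ from \eqref{w-shift} and the orthonormality of $\{e_\alpha\}_{|\alpha|=k}$,
$$\|G_k(W)(1\otimes x)\|^2=\Big\langle \sum_{|\alpha|=k}(b_\alpha^{(m)})^{-1}C_{(\alpha)}^*C_{(\alpha)}\,x,\,x\Big\rangle,$$
so $\|G_k(W)\|^2\geq \|\sum_{|\alpha|=k}(b_\alpha^{(m)})^{-1}C_{(\alpha)}^*C_{(\alpha)}\|$. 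Operator-norm convergence of $\sum_k r^kG_k(W)$ forces $r^k\|G_k(W)\|\to 0$ for each $r<1$, whence $\limsup_k\|G_k(W)\|^{1/k}\leq 1$ and (iii) follows.

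The main step (iii)$\Rightarrow$(iv) proceeds by showing that under (iii), for every $r\in[0,1)$, the formal expression $G(rW)=\sum_\alpha r^{|\alpha|}W_\alpha\otimes C_{(\alpha)}$ defines a bounded operator in $F_n^\infty({\bf D}^m_f)\bar\otimes B(\cE)$. The root test applied to (iii) gives $\sum_k r^{2k}\|\sum_{|\alpha|=k}(b_\alpha^{(m)})^{-1}C_{(\alpha)}^*C_{(\alpha)}\|<\infty$, which via the vacuum computation of the previous paragraph yields $\|G(rW)(1\otimes x)\|\leq C_r\|x\|$; the Hardy-algebra characterization of $F_n^\infty({\bf D}^m_f)$ from \cite{Po-Berezin} then extends this bound on the vacuum to a bounded operator on $F^2(H_n)\otimes\cE$. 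The Fourier-coefficient bound $\|r^kG_k(W)\|\leq \|G(rW)\|$ for homogeneous parts of Hardy-algebra elements (a consequence of the contractivity of Cesaro means) gives $\|G_k(W)\|\leq \|G(rW)\|/r^k$, so $\limsup_k\|G_k(W)\|^{1/k}\leq 1/r$ for every $r<1$; letting $r\to 1^-$ produces $\limsup_k\|G_k(W)\|^{1/k}\leq 1$, and the root test yields (iv).

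For (iv)$\Rightarrow$(v), write $X\in {\bf D}_{f,\text{\rm rad}}^m(\cH)$ as $X=rY$ with $Y\in {\bf D}_f^m(\cH)$ and $r\in[0,1)$; the noncommutative von Neumann inequality \eqref{vn1} (applied with $\beta=g_0$) gives $\|\sum_{|\alpha|=k}X_\alpha\otimes C_{(\alpha)}\|\leq r^k\|G_k(W)\|$, and summing over $k$ using (iv) produces (v). Finally (v)$\Rightarrow$(i) is immediate, since absolute convergence in norm forces convergence in operator norm. The main obstacle is the implication (iii)$\Rightarrow$(iv): the vacuum-level estimate $\|G_k(W)P_\CC\|^2=\|\sum_{|\alpha|=k}(b_\alpha^{(m)})^{-1}C_{(\alpha)}^*C_{(\alpha)}\|$ alone is not sufficient to control the full operator norm $\|G_k(W)\|^2=\sup_\gamma\|\sum_{|\alpha|=k}(b_\gamma^{(m)}/b_{\alpha\gamma}^{(m)})C_{(\alpha)}^*C_{(\alpha)}\|$, since $W_\alpha^*W_\alpha$ has diagonal entries $b_\gamma^{(m)}/b_{\alpha\gamma}^{(m)}$ that can exceed $(b_\alpha^{(m)})^{-1}$ by factors depending on $|\gamma|$, and bridging this gap is precisely the role of the Hardy-algebra multiplier machinery from \cite{Po-Berezin} invoked above.
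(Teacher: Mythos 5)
Your chain of implications and your arguments for (i)$\Rightarrow$(ii), (ii)$\Rightarrow$(iii) (the vacuum-vector computation), (iv)$\Rightarrow$(v) (von Neumann inequality \eqref{vn1}), and (v)$\Rightarrow$(i) are essentially the paper's. The problem is exactly where you flag it, (iii)$\Rightarrow$(iv), and the remedy you propose does not close it. The estimate $\|G(rW)(1\otimes x)\|\leq C_r\|x\|$ only controls the image of the vacuum, i.e.\ it shows $G(rW)(1\otimes x)\in F^2(H_n)\otimes\cE$ with a norm bound; membership in $F_n^\infty({\bf D}_f^m)\bar\otimes B(\cE)$ requires $\sup_{\|p\|\leq 1}\|G(rW)p\|<\infty$ over \emph{all} polynomials $p$, and there is no result in \cite{Po-Berezin} that upgrades a vacuum (i.e.\ $F^2$-type) bound to a multiplier bound --- already for $n=m=1$, $f=X$, an $H^2$ estimate does not yield an $H^\infty$ estimate. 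So the sentence ``the Hardy-algebra characterization \dots\ extends this bound on the vacuum to a bounded operator'' is unsupported, and without it your subsequent Fourier-coefficient argument (which is fine in itself, via the gauge unitaries $e_\alpha\mapsto e^{i|\alpha|\theta}e_\alpha$) has nothing to feed on.

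What actually closes the gap, and is the paper's key input, is Lemma 1.1 of \cite{Po-Berezin} combined with \eqref{WbWb}: the weights satisfy $b_\gamma^{(m)}/b_{\beta\gamma}^{(m)}\leq \binom{|\beta|+m-1}{m-1}^2\big/ b_\beta^{(m)}$ \emph{uniformly in} $\gamma$ --- so, contrary to your closing remark, the excess over $(b_\beta^{(m)})^{-1}$ does not depend on $|\gamma|$ at all, only polynomially on $|\beta|$. Equivalently, $\bigl\|\sum_{|\beta|=k}b_\beta^{(m)}W_\beta W_\beta^*\bigr\|\leq\binom{k+m-1}{m-1}$, and a Cauchy--Schwarz argument then gives $\bigl\|\sum_{|\beta|=k}W_\beta\otimes C_{(\beta)}\bigr\|\leq \bigl\|\sum_{|\beta|=k}(b_\beta^{(m)})^{-1}C_{(\beta)}^*C_{(\beta)}\bigr\|^{1/2}\binom{k+m-1}{m-1}^{1/2}$. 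Since (iii) bounds the first factor by $\rho^{-k}$ for large $k$ and any $\rho\in(r,1)$, the series $\sum_k r^k\bigl\|\sum_{|\beta|=k}W_\beta\otimes C_{(\beta)}\bigr\|$ converges by comparison with $\sum_k (r/\rho)^k\binom{k+m-1}{m-1}^{1/2}$; this yields (iv) directly, with no detour through boundedness of $G(rW)$ as an element of the Hardy algebra. You should replace your appeal to unspecified ``multiplier machinery'' by this explicit weight estimate.
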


\begin{proof} Since $(W_1,\ldots, W_n)\in {\bf D}_f^m (F^2(H_n))$, the implication
  $(i)\implies (ii)$ is obvious. To prove that $(ii)\implies (iii)$, we assume that
$$
 \limsup\limits_{k\to\infty}
\left\|\sum_{|\alpha|=k}
 \frac{1}{b_\alpha^{(m)}}C_{(\alpha)}^* C_{(\alpha)}\right\|^{\frac{1} {2k}}>\gamma>1.
$$
Then there is $k$ as large as  necessary  such that
$\left\|\sum_{|\alpha|=k}
 \frac{1}{b_\alpha^{(m)}}C_{(\alpha)}^* C_{(\alpha)}\right\|>\gamma^{2k}$. Let $r$ be  such that
$\frac{1}{\gamma}<r<1$ and let $h\in \cE$.  Note that, using
relation \eqref{WbWb}, we have
\begin{equation*}
\begin{split}
\left\|\sum_{|\beta|=k}  r^{|\beta|}W_\beta(1) \otimes
C_{(\beta)}h\right\|&=\left\|\sum_{|\beta|=k}
r^{|\beta|}\frac{1}{\sqrt{b_\beta^{(m)}}} e_\beta \otimes
C_{(\beta)}h
 \right\|\\
&=\left(r^{2k}\left<\sum_{|\beta|=k}
 \frac{1}{b_\beta^{(m)}}C_{(\beta)}^*C_{(\beta)}h,h\right>\right)^{1/2}.
\end{split}
\end{equation*}
Hence, we deduce that
$$
\left\|\sum_{|\beta|=k}  r^{|\beta|}W_\beta \otimes
C_{(\beta)}\right\|\geq r^k\left\|\sum_{|\alpha|=k}
 \frac{1}{b_\alpha^{(m)}}C_{(\alpha)}^* C_{(\alpha)}\right\|^{1/2}
>(r\gamma)^k.
$$
 Since $r\gamma>1$, the series
$\sum\limits_{k=0}^\infty
 \sum\limits_{|\beta|=k}   r^{|\beta|}W_\beta\otimes C_{(\beta)}$ is divergent in the operator norm topology.

Now we prove that $(iii)\implies (iv)$. Assume that  item $(iii)$
holds and let $0<r<\rho<1$. Then, we can find $m_0\in \NN$ such that
\begin{equation}
\label{box}
 \left\|\sum_{|\alpha|=k}
 \frac{1}{b_\alpha^{(m)}}C_{(\alpha)}^* C_{(\alpha)}\right\|<\left(\frac{1}{\rho}\right)^{2k}\quad
\text{ for any } \ k\geq m_0.
\end{equation}
According to  Lemma 1.1 from \cite{Po-Berezin} and  relation
\eqref{WbWb}, the operators $\{W_\beta\}_{|\beta|=k}$ have
orthogonal ranges and

$$\|W_\beta x\|\leq \frac{1}{\sqrt{b^{(m)}_\beta}} M_{|\beta|,m}\|x\|,
\qquad x\in F^2(H_n),
$$
where $M_{|\beta|, m}:=\left(\begin{matrix} |\beta|+m-1\\m-1
\end{matrix}\right)$.
Consequently,  we have
\begin{equation*}
  \left\|\sum\limits_{|\beta|=k} b_\beta^{(m)} W_\beta
W_\beta^*\right\|\leq \left(\begin{matrix} k+m-1\\m-1
\end{matrix}\right)\quad \text{  for any } \quad k=0,1,\ldots.
\end{equation*}
  Hence, and using  relation \eqref{box},
 we deduce that
\begin{equation*}
 \begin{split}
 \sum_{k=0}^\infty r^k \left\|\sum_{|\beta|=k}   W_\beta\otimes C_{(\beta)}\right\|
 &\leq
 \sum_{k=0}^\infty r^k \left\|\sum_{|\beta|=k}  \frac{1}
 {b^{(m)}_\beta}C_{(\beta)}^* C_{(\beta)}\right\|^{1/2}\left\|\sum_{|\beta|=k} b^{(m)}_\beta
  W_\beta W_\beta^*\right\|^{1/2}  \\
  &\leq
  \sum_{k=0}^\infty \left\|\sum_{|\beta|=k}  \frac{1}
 {b^{(m)}_\beta}C_{(\beta)}^* C_{(\beta)}\right\|^{1/2} r^k \left(\begin{matrix}
k+m-1\\m-1 \end{matrix}\right)^{1/2}  \\
 &\leq
     \sum_{k=0}^\infty \left(\frac{r}{\rho}\right)^{k}\left(\begin{matrix}
k+m-1\\m-1 \end{matrix}\right)^{1/2} < \infty,
 \end{split}
 \end{equation*}
which proves  (iv).
  Since the implication $(v)\implies (i)$ is obvious, it
remains to prove that $(iv)\implies (v)$. To this end,  assume that
item $(iv)$ holds    and let $(X_1\ldots, X_n)\in {\bf
D}_{f,\text{\rm rad}}^m(\cH)$. Then there exists $r\in
(0,1)$ such that
 $(\frac{1}{r} X_1,\ldots,\frac{1}{r}X_n)\in {\bf D}_f^m(\cH)$
and,  using the  noncommutative von Neumann inequality \eqref{vn1},
we deduce that
$$
\left\|\sum_{|\alpha|=k}  X_\alpha\otimes C_{(\alpha)}\right\|\leq
\left\|\sum_{|\alpha|=k}  r^{|\alpha|}W_\alpha\otimes
C_{(\alpha)}\right\|.
$$
Hence, the series
$\sum\limits_{k=0}^\infty\left\|\sum\limits_{|\alpha|=k}
X_\alpha\otimes C_{(\alpha)}\right\|$ is convergent and, therefore,
item $(v)$ holds.
 The proof is complete.
\end{proof}

We remark  that  the coefficients  of a  free holomorphic function
are uniquely determined by its representation on
   an infinite dimensional   Hilbert space. Indeed,
   let $F:{\bf
D}_{f,\text{\rm rad}}^m(\cH)\to
  B( \cH)\bar\otimes_{min} B(\cE)$ be a
   free
holomorphic function  on  ${\bf D}_{f,\text{\rm rad}}^m(\cH)$  with
coefficients
 $A_{(\alpha)}\in B(\cE)$,
$\alpha\in \FF_n^+$. That is
$$
F(X_1,\ldots, X_n)=\sum\limits_{k=0}^\infty \sum\limits_{|\alpha|=k}
X_\alpha\otimes  A_{(\alpha)},
$$
where the series converges in the operator  norm topology  for any
$(X_1,\ldots, X_n)\in {\bf D}_{f,\text{\rm rad}}^m(\cH)$.
   Let $0<r<1$ and assume that
   $F(rW_1,\ldots, rW_n)=0$.  Taking into account  relation  \eqref{WbWb},
    we have
\begin{equation*}
\left< F(rW_1,\ldots, rW_n)(1\otimes x), ( W_\alpha\otimes
I_\cE)(1\otimes
y)\right>=\frac{r^{|\alpha|}}{b_\alpha^{(m)}}\left<A_{(\alpha)}x,y\right>=0
\end{equation*}
for any $x, y\in \cE$   and $\alpha\in \FF_n^+$. Therefore
$A_{(\alpha)}=0$ for any $\alpha\in \FF_n^+$, which proves our
assertion. Due to this reason, throughout  this paper, we assume
that $\cH$ is a separable infinite dimensional Hilbert space.

Let $G(X_1,\ldots, X_n):=\sum_{k=0}^\infty\sum_{|\alpha|=k }
  X_\alpha \otimes C_{(\alpha)}$ be  a  free holomorphic function on
  ${\bf D}^m_{f, \text{\rm rad}}(\cH)$ with coefficients in $B(\cE)$.
 Note that if $0<r_1<r_2<1$, then ${\bf D}_{f,r_1}^m(\cH)\subseteq
{\bf D}_{f,r_2}^m(\cH)$ and
\begin{equation}\label{Gr}
\|G_{r_1}(W_1,\ldots,  W_n)\|\leq \|G_{r_2}(W_1,\ldots,  W_n)\|,
\end{equation}
where $G_{r}(W_1,\ldots,  W_n):=\sum_{k=0}^\infty \sum_{|\alpha|=k}
r^{|\alpha|}W_\alpha\otimes C_{(\alpha)}$, $0\leq r<1$, is
convergent in the operator norm topology. Indeed, since
$\varphi(W_1,\ldots, W_n):=\sum_{k=0}^\infty \sum_{|\alpha|=k}
r_2^{|\alpha|} W_\alpha\otimes C_{(\alpha)}$ is in the tensor
algebra $\cA_n({\bf D}_f^m)\bar\otimes_{min}B(\cE)$ and
$(rW_1,\ldots, rW_n)\in {\bf D}_f^m(F^2(H_n))$, the noncommutative
von Neumann inequality \eqref{vn1} implies
$$\|\varphi(rW_1,\ldots, rW_n)\|\leq
\|\varphi(W_1,\ldots, W_n)\|, \qquad   r\in [0,1).
$$
Taking $r:=\frac{r_1}{r_2}$ in the latter inequality,  our assertion
follows.

 We  also remark that, for each $r\in [0,1)$,  the map \ $G:{\bf D}_{f,r}^m(\cH)\to
 B(\cH)\otimes_{min} B(\cE)$
defined by $G(X_1,\ldots, X_n):=\sum_{k=0}^\infty \sum_{|\alpha|=k}
X_\alpha \otimes C_{(\alpha)}$ is continuous in the operator norm
topology and \begin{equation} \label{vnr}\|G(X_1,\ldots, X_n)\|\leq
\|G(rW_1,\ldots, rW_n)\|, \qquad (X_1,\ldots, X_n) \in {\bf
D}_{f,r}^m(\cH).
\end{equation}
Moreover, the series defining $G$  converges uniformly on ${\bf
D}_{f,r}^m(\cH)$ in the same topology. Indeed, since
$G_r(W_1,\ldots, W_n)\in \cA_n({\bf D}_f^m)\bar \otimes_{min}
B(\cE)$ and $\left(\frac{1}{r} X_1,\ldots,\frac{1}{r}X_n\right)\in
{\bf D}_f^{\text{\rm rad}}(\cH)$, and  using  again    the
noncommutative von Neumann inequality \eqref{vn1}, we obtain
\begin{equation}
\label{2ine} \|G(X_1,\ldots, X_n)\|=\left\|G_r\left(\frac{1}{r}
X_1,\ldots,\frac{1}{r}X_n\right)\right\|\leq \|G_r(W_1,\ldots,
W_n)\|
\end{equation}
and
$$
\sum_{k=0}^\infty \left\|\sum_{|\alpha|=k} X_\alpha\otimes
C_{(\alpha)}\right\|\leq \sum_{k=0}^\infty \left\|\sum_{|\alpha|=k}
r^{|\alpha|}W_\alpha\otimes C_{(\alpha)}\right\|
$$
for any $(X_1,\ldots, X_n) \in {\bf D}_{f,r}^m(\cH)$.  This clearly
implies our assertion.

\bigskip

As in the particular case $m=1$ (see \cite{Po-domains}), there is an
important connection between the theory of free holomorphic
functions on noncommutative domains ${\bf D}_{f,\text{\rm rad}}^m$,
$m\geq 1$,  and the theory of holomorphic functions on domains in
$\CC^d$(\cite{Kr}).

Indeed, consider the case when $\cH=\CC^p$ and $p=1,2\ldots.$ Then
     ${\bf D}_{f}^m(\CC^p)$ can be seen as a subset of $\CC^{np^2}$ with
an arbitrary norm. We denote by $Int({\bf D}_{f}^m(\CC^p))$ the
interior of the closed set $ {\bf D}_{f}^m(\CC^p)$.  Note that
$Int({\bf D}_{f}^m(\CC^p))$ are circular domains, i.e.,
$(e^{i\theta}\Lambda_1,\ldots, e^{i\theta}\Lambda_n)\in Int({\bf
D}_{f}^m(\CC^p))$ for any $(\Lambda_1,\ldots \Lambda_n)\in Int({\bf
D}_{f}^m(\CC^p))$ and $\theta\in \RR$. In the particular case when
$p=1$, the interior $Int({\bf D}_{f}^m(\CC))$ is a Reinhardt domain,
i.e., $(e^{i\theta_1}\lambda_1,\ldots, e^{i\theta_n}\lambda_n)\in
Int({\bf D}_{f}^m(\CC))$ for any $(\lambda_1,\ldots \lambda_n)\in
Int({\bf D}_{f}^m(\CC))$ and $\theta_1,\ldots \theta_n\in \RR$.

If $K$ is a compact subset in the interior of
${\bf D}_{f}^m(\CC^p)$, then there exists  $r\in (0,1)$ such that
$K\subset {\bf D}_{f, r}^m(\CC^p)$. Given a free holomorphic
function on
  ${\bf D}^m_{f, \text{\rm rad}}(\cH)$ with scalar coefficients,
   $F(X_1,\ldots, X_n):=\sum\limits_{k=0}^\infty\sum\limits_{|\alpha|=k
} c_\alpha X_\alpha $, the results above show that
$$
\left\|F(\Lambda_1,\ldots, \Lambda_n)-\sum_{k=0}^q
\sum_{|\alpha|=k}c_\alpha \Lambda_\alpha\right\|\leq
\sum_{k=q+1}^\infty \left\|\sum_{|\alpha|=k} r^{|\alpha|}c_\alpha
W_\alpha  \right\|
$$
for any $(\Lambda_1,\ldots, \Lambda_n)\in K$. Consequently,
$\sum_{k=0}^q \sum_{|\alpha|=k}c_\alpha \Lambda_\alpha$ converges
 to $F(\Lambda_1,\ldots, \Lambda_n)$ uniformly on $K$, as
$q\to\infty$. Therefore, the map $(\Lambda_1,\ldots,
\Lambda_n)\mapsto F(\Lambda_1,\ldots, \Lambda_n)$ is holomorphic on
the interior of  ${\bf D}_{f}^m(\CC^p)$.   To record this result,
let $M_p$  denote the set of all $p\times p$ matrices with entries
in $\CC$.

\begin{corollary} \label{rep-finite1} If  $p\in \NN$ and
$F(X_1,\ldots, X_n):=\sum\limits_{k=0}^\infty\sum\limits_{|\alpha|=k
}
  c_\alpha X_\alpha $ is  a  free holomorphic function on
  ${\bf D}^m_{f, \text{\rm rad}}(\cH)$ with scalar coefficients,
  then its representation on $\CC^p$,  i.e., the map
  $F_p$ defined by
  $$
  \CC^{np^2}\supset {\bf
D}_{f,\text{\rm rad}}^m(\CC^p)\ni (\Lambda_1,\ldots,
\Lambda_n)\mapsto F(\Lambda_1,\ldots, \Lambda_n)\in M_{ p}\subset
\CC^{p^2}
$$
is a  holomorphic function on the interior of ${\bf
D}_{f}^m(\CC^p)$.
\end{corollary}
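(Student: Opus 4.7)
The plan is to reduce the statement to the Weierstrass theorem in several complex variables, which asserts that a locally uniform limit of holomorphic maps between domains in complex Euclidean spaces is itself holomorphic. Each partial sum
$$
F_{(q)}(\Lambda_1,\ldots,\Lambda_n):=\sum_{k=0}^q\sum_{|\alpha|=k}c_\alpha\Lambda_\alpha
$$
is a polynomial in the $np^2$ complex entries of $\Lambda_1,\ldots,\Lambda_n$, hence holomorphic as a map $\CC^{np^2}\to M_p\simeq \CC^{p^2}$. It therefore suffices to show that $F_{(q)}\to F_p$ uniformly on every compact subset $K\subset Int({\bf D}_f^m(\CC^p))$.

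The only non-routine step is establishing the inclusion $K\subset {\bf D}_{f,r}^m(\CC^p)$ for a single $r=r_K\in(0,1)$. I would argue as follows. Since ${\bf D}_f^m(\CC^p)$ is starlike from $0$, so is its interior $U:=Int({\bf D}_f^m(\CC^p))$; indeed, if $B(\Lambda,\varepsilon)\subset{\bf D}_f^m(\CC^p)$ then $B(t\Lambda,t\varepsilon)=tB(\Lambda,\varepsilon)\subset{\bf D}_f^m(\CC^p)$ for $t\in[0,1]$, so $t\Lambda\in U$. Consequently the sets $V_r:=rU$ are open, increase with $r\in(0,1]$, and are contained in $r\,{\bf D}_f^m(\CC^p)={\bf D}_{f,r}^m(\CC^p)$. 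For any $\Lambda\in K$ the point $\frac{1}{r}\Lambda$ lies in the open set $U$ whenever $r<1$ is sufficiently close to $1$, so $\Lambda\in V_r$. Thus $\{V_r\}_{0<r<1}$ is an increasing open cover of the compact set $K$, and compactness together with monotonicity yields a single $r_K<1$ with $K\subset V_{r_K}\subset {\bf D}_{f,r_K}^m(\CC^p)$.

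With such an $r=r_K$ in hand, inequality \eqref{2ine}, applied to the tail series, gives
$$
\sup_{(\Lambda_1,\ldots,\Lambda_n)\in K}\left\|F(\Lambda_1,\ldots,\Lambda_n)-F_{(q)}(\Lambda_1,\ldots,\Lambda_n)\right\|\leq \sum_{k=q+1}^\infty\left\|\sum_{|\alpha|=k}r^{|\alpha|}c_\alpha W_\alpha\right\|,
$$
and the right-hand side tends to $0$ as $q\to\infty$ by item (iv) of Theorem \ref{free-ho}. The polynomial partial sums $F_{(q)}$ therefore converge to $F_p$ uniformly on $K$, and Weierstrass's theorem identifies $F_p$ as holomorphic on $Int({\bf D}_f^m(\CC^p))$. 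The only genuine obstacle is the compactness argument in the middle paragraph, where starlikeness of the interior must be exploited to produce a single radius covering $K$; everything else is bookkeeping with machinery already in place.
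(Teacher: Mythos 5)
Your proof is correct and takes essentially the same route as the paper: the polynomial partial sums are holomorphic, the inclusion $K\subset {\bf D}_{f,r}^m(\CC^p)$ for compact $K\subset Int({\bf D}_{f}^m(\CC^p))$ combined with the von Neumann-type tail estimate gives uniform convergence on compact sets, and holomorphy follows for the locally uniform limit. The only difference is that you supply an explicit starlikeness-plus-compactness argument for the inclusion $K\subset {\bf D}_{f,r}^m(\CC^p)$, a step the paper simply asserts.
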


\bigskip

\section{Noncommutative   domain algebras and Hardy algebras}

We introduce the  Hardy algebra
$H^\infty({\bf D}_{f,\text{\rm rad}}^m)$ and  the domain algebra
$A({\bf D}_{f,\text{\rm rad}}^m)$ associated with the noncommutative domain ${\bf D}_{f,\text{\rm rad}}^m$, $m\geq1$. Using noncommutative Berezin
transforms, we identify  them with the  noncommutative algebras $F_n^\infty({\bf
D}^m_f)$ and  $\cA_n({\bf D}^m_f)$,
 respectively. Hardy algebras of bounded free holomorphic functions with
 operator-valued coefficients are  also discussed.

Let us recall some definitions concerning completely bounded maps
 on operator spaces. We identify $M_p(B(\cH))$, the set of
$p\times p$ matrices with entries from $B(\cH)$, with
$B( \cH^{(p)})$, where $\cH^{(p)}$ is the direct sum of $p$ copies
of $\cH$.
Thus we have a natural $C^*$-norm on
$M_p(B(\cH))$. If $X$ is an operator space, i.e., a closed subspace
of $B(\cH)$, we consider $M_p(X)$ as a subspace of $M_p(B(\cH))$
with the induced norm.
Let $X, Y$ be operator spaces and $u:X\to Y$ be a linear map. Define
the map
$u_p:M_p(X)\to M_p(Y)$ by
$$
u_p ([x_{ij}]):=[u(x_{ij})].
$$
We say that $u$ is completely bounded   if
$
\|u\|_{cb}:=\sup_{p\ge1}\|u_p\|<\infty.
$
If $\|u\|_{cb}\leq1$
(resp. $u_p$ is an isometry for any $p\geq1$) then $u$ is completely
contractive (resp. isometric),
 and if $u_p$ is positive for all $p$, then $u$ is called
 completely positive.
   For more information  on completely bounded maps,  we refer
 to \cite{Pa-book}, \cite{Pi}, and \cite{ER}.

 According to \cite{Po-Berezin}, if  $T:=(T_1,\ldots, T_n)\in {\bf
 D}^m_f(\cH)$  is {\it completely
non-coisometric }(c.n.c) with respect to ${\bf D}_f^m(\cH)$, i.e.,
there is no vector $h\in \cH$, $h\neq 0$, such that
$$
\left< \Phi_{f,T}^k(I)h,h\right>=\|h\|^2 \quad \text{ for any } \
k=1,2,\ldots,
$$
 then
 \begin{equation}
 \label{Be-transf}
 \cB_T(G):=\text{\rm SOT-}\lim_{r\to 1} G_r(T_1,\ldots, T_n), \qquad
  G=G(W_1,\ldots,
 W_n)\in F_n^\infty({\bf D}_f^m),
 \end{equation}
 exists in the strong operator topology   and defines a map
 $\cB_T:F_n^\infty({\bf D}_f^m)\to B(\cH)$, called the
 {\it noncommutative Berezin transform} at $T$ on  the Hardy algebra $F_n^\infty({\bf D}_f^m)$,
  with the following
 properties:
\begin{enumerate}
\item[(i)]
$\cB_T(G)=\text{\rm SOT-}\lim\limits_{r\to 1}{\bf
B}_{rT}[G]$, where ${\bf B}_{rT}$ is the noncommutative  Berezin
transform  at $rT\in {\bf D}_f^m(\cH)$;
\item[(ii)] $\cB_T$ is    WOT-continuous (resp.
SOT-continuous)  on bounded sets;
\item[(iii)]
$\cB_T$ is a unital completely contractive homomorphism.
\end{enumerate}
We remark that if $T$ is a {\it pure} $n$-tuple of operators, i.e.,
$\text{\rm SOT-}\lim_{p\to\infty}\Phi_{f,T}^p(I)=0$,  then $T$ is
c.n.c. and
$$
\cB_T(G)=\widetilde{\bf B}_T[G]={K_{f,T}^{(m)}}^* (G\otimes
I_{\cD_{f,m,T}})K_{f,T}^{(m)},
 \qquad G\in F_n^\infty({\bf D}_f^m),
$$
where  $\cD_{f,m,T}:=\overline{\Delta_{f,m,T}(\cH)}$  and
$\widetilde{\bf B}_T$ is the extended noncommutative Berezin
transform  defined by relation \eqref{def-Be2}. In particular, if
$G(W_1,\ldots, W_n):=\sum_{\alpha\in \FF_n^+} c_\alpha W_\alpha $ in
the Hardy algebra $F_n^\infty({\bf D}^m_f)$, then
\begin{equation}
\label{sot}
G(W_1,\ldots, W_n)=\text{\rm SOT-}\lim\limits_{r\to
1}G_r(W_1,\ldots, W_n),
\end{equation}
 where
$G_r(W_1,\ldots, W_n):= {\bf B}_{rW}[G]=\sum_{k=0}^\infty
\sum_{|\alpha|=k} c_\alpha r^{|\alpha|} W_\alpha $, and
\begin{equation}
\label{GrG}
\|G_r(W_1,\ldots, W_n)\|\leq \|G(W_1,\ldots, W_n)\|,
\qquad r\in [0,1).
\end{equation}
 Moreover,   we can prove that
\begin{equation}\label{many eq}
\|G(W_1,\ldots, W_n)\|=\sup_{0\leq r<1}\|G(rW_1,\ldots, rW_n)\|=
\lim_{r\to 1}\|G(rW_1,\ldots, rW_n)\|.
\end{equation}
Indeed, if   $G(W_1,\ldots, W_n)\in F_n^\infty({\bf D}^m_f))$ and
$\epsilon>0$, then there exists a polynomial $q\in F^2(H_n)$ with
$\|q\|=1$ such that
$$
\|G(W_1,\ldots, W_n)q\|>\|G(W_1,\ldots, W_n)\|-\epsilon.
$$
By   relation \eqref{sot}, there is $r_0\in (0,1)$ such that
$$\|G_{r_0}(W_1,\ldots, W_n)q\|>\|G(W_1,\ldots, W_n)\|-\epsilon.
$$
Combining this with the inequality \eqref{GrG}, we obtain
$$
\sup_{0\leq r<1}\|G(rW_1,\ldots, rW_n)\|=\|G(W_1,\ldots, W_n)\|.
$$
  Since    the   function
 $[0,1)\ni r\to \|G(rW_1,\ldots, rW_n)\|\in \RR^+$
is increasing (see relation \eqref{Gr}), we can  complete the proof
of our assertion.

Now, we are ready to introduce the noncommutative Hardy algebra
$H^\infty({\bf D}_{f,\text{\rm rad}}^m)$. We denote by $Hol({\bf
D}_{f,\text{\rm rad}}^m)$ the set of all free holomorphic functions
on  ${\bf D}_{f,\text{\rm rad}}^m$ with scalar coefficients. Due to
Theorem \ref{free-ho}, if $G\in Hol({\bf D}_{f,\text{\rm rad}}^m)$,
then $G(rW_1,\ldots, rW_n)$ is in the domain algebra $\cA_n({\bf
D}_f^m)$ for any $r\in [0,1)$. Consequently,  one can easily show
that $Hol({\bf D}_{f,\text{\rm rad}}^m)$ is an algebra. Let
$H^\infty({\bf D}_{f,\text{\rm rad}}^m)$  denote the set of  all
elements $\varphi$ in $Hol({\bf D}_{f,\text{\rm rad}}^m)$     such
that
$$\|\varphi\|_\infty:= \sup \|\varphi(X_1,\ldots, X_n)\|<\infty,
$$
where the supremum is taken over all $n$-tuples $(X_1,\ldots,
X_n)\in {\bf D}_{f,\text{\rm rad}}^m(\cH)$ and any Hilbert space
$\cH$. One can  show that $H^\infty({\bf D}_{f,\text{\rm
rad}}^m)$ is a Banach algebra under pointwise multiplication and the
norm $\|\cdot \|_\infty$.

For each $p=1,2,\ldots$, we define the norms $\|\cdot
\|_p:M_p\left(H^\infty({\bf D}_{f,\text{\rm rad}}^m)\right)\to
[0,\infty)$ by setting
$$
\|[F_{ij}]_p\|_p:= \sup \|[F_{ij}(X_1,\ldots, X_n)]_p\|,
$$
where the supremum is taken over all $n$-tuples $(X_1,\ldots,
X_n)\in {\bf D}_{f,\text{\rm rad}}^m$ and any Hilbert space $\cH$.
It is easy to see that the norms  $\|\cdot\|_p$, $p=1,2,\ldots$,
determine  an operator space structure  on $H^\infty({\bf
D}_{f,\text{\rm rad}}^m)$,
 in the sense of Ruan (\cite{ER}).

Given  $g\in F_n^\infty({\bf D}^m_f)$, the noncommutative Berezin
transform  associated with the noncommutative domain ${\bf D}^m_f$
generates a function
$$
{\bf B}[\varphi]:{\bf D}_{f,\text{\rm rad}}^m(\cH)\to B(\cH)
$$
by setting
$$
{\bf B}[\varphi](X_1,\ldots, X_n):={\bf B}_{X}[\varphi],\qquad
X:=(X_1,\ldots, X_n)\in {\bf D}_{f,\text{\rm rad}}^m(\cH).
$$
We call ${\bf B}[\varphi]$  the Berezin transform of $\varphi$. In
what follows,  we identify the  noncommutative algebra
$F_n^\infty({\bf D}^m_f)$ with the  Hardy subalgebra $H^\infty({\bf
D}_{f,\text{\rm rad}}^m)$ of   bounded free holomorphic functions on
${\bf D}_{f,\text{\rm rad}}^m$.

\begin{theorem}\label{f-infty} The map
$ \Phi:H^\infty({\bf D}_{f,\text{\rm rad}}^m)\to F_n^\infty({\bf
D}^m_f) $ defined by
$$
\Phi\left(\sum\limits_{\alpha\in \FF_n^+} c_\alpha
Z_\alpha\right):=\sum\limits_{\alpha\in \FF_n^+} c_\alpha W_\alpha
$$
is a completely isometric isomorphism of operator algebras.
Moreover, if  $G:=\sum\limits_{\alpha\in \FF_n^+} c_\alpha Z_\alpha$
is  a free holomorphic function on the noncommutative domain ${\bf
D}_{f,\text{\rm rad}}^m$, then the following statements are equivalent:
 \begin{enumerate}
 \item[(i)]$G\in H^\infty({\bf D}_{f,\text{\rm
rad}}^m)$;
\item[(ii)] $\sup\limits_{0\leq r<1}\|G(rW_1,\ldots,
rW_n)\|<\infty$, where $G(rW_1,\ldots,
rW_n):=\sum_{k=0}^\infty\sum_{|\alpha|=k}c_\alpha r^{|\alpha|} W_\alpha$;
\item[(iii)]
there exists $\varphi\in F_n^\infty({\bf D}^m_f)$ with $G={\bf
B}[\varphi]$.
\end{enumerate}

In this case,
$$
\Phi(G)=\text{\rm SOT-}\lim_{r\to 1}G(rW_1,\ldots, rW_n)  \quad
\text{ and } \quad  \Phi^{-1}(\varphi)={\bf B}[\varphi],\quad
\varphi\in F_n^\infty({\bf D}^m_f),
$$
where ${\bf B}$ is the  noncommutative Berezin
transform  associated with the noncommutative domain ${\bf D}^m_f$.
\end{theorem}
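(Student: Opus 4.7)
The plan is to establish the three-way equivalence (i) $\Leftrightarrow$ (ii) $\Leftrightarrow$ (iii) first, since the bijectivity of $\Phi$ and the formulas for $\Phi$ and $\Phi^{-1}$ both fall out of that chain; the complete-isometry claim then follows by two applications of the noncommutative von Neumann inequality \eqref{vn1} together with \eqref{many eq}.

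For the easy directions, (i) $\Rightarrow$ (ii) is immediate from $(rW_1,\dots,rW_n)\in {\bf D}_{f,\text{\rm rad}}^m(F^2(H_n))$ for each $r\in[0,1)$. For (iii) $\Rightarrow$ (i) I would first observe that any $X\in {\bf D}_{f,\text{\rm rad}}^m(\cH)$ can be written $X=rT$ with $T\in {\bf D}_f^m(\cH)$ and $r\in[0,1)$, whence $\Phi_{f,X}(I)\le r^{2}\Phi_{f,T}(I)\le r^{2}I$ forces $r_f(X)\le r<1$. Hence $X$ is pure, so the noncommutative Berezin transform $\cB_X$ acts on the full Hardy algebra $F_n^\infty({\bf D}^m_f)$ via \eqref{Be-transf} as a unital completely contractive homomorphism, and $\|\cB_X(\varphi)\|\le\|\varphi\|$. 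A dominated-convergence argument on $\cB_X(\varphi)=\text{\rm SOT-}\lim_{s\to 1}\sum c_\alpha s^{|\alpha|}X_\alpha$, using the uniform norm convergence of the free holomorphic series on radial slices supplied by Theorem \ref{free-ho}, identifies $\cB_X(\varphi)$ with $G(X)$. This gives (i) with $\|G\|_\infty\le\|\varphi\|$.

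The main step is (ii) $\Rightarrow$ (iii). Let $M:=\sup_{0\le r<1}\|G(rW_1,\dots,rW_n)\|$ and choose any sequence $r_k\nearrow 1$; the polynomials $\varphi_k:=G(r_kW_1,\dots,r_kW_n)$ lie in the closed $M$-ball of $F_n^\infty({\bf D}^m_f)$. Since $F_n^\infty({\bf D}^m_f)$ is the sequential $w^*$-closure of polynomials in $W_1,\dots,W_n$ and the identity, I would invoke its dual-space structure and extract a subsequence converging in the weak operator topology to some $\varphi\in F_n^\infty({\bf D}^m_f)$ with $\|\varphi\|\le M$. Pairing $\varphi_k(1)-\varphi(1)$ against $e_\alpha\in F^2(H_n)$ and using \eqref{WbWb} gives $\langle\varphi(1),e_\alpha\rangle=\lim_k c_\alpha r_k^{|\alpha|}/\sqrt{b_\alpha^{(m)}}=c_\alpha/\sqrt{b_\alpha^{(m)}}$, which identifies the Fourier representation of $\varphi$ as $\sum_\alpha c_\alpha W_\alpha$. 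The same dominated-convergence identification from the previous paragraph then gives $G(X)=\cB_X(\varphi)={\bf B}[\varphi](X)$ for $X\in {\bf D}_{f,\text{\rm rad}}^m(\cH)$, which is (iii). This is the step I expect to need the most care: extracting a weak-$*$ limit from a bounded family indexed by a continuous parameter, certifying that the limit lives in $F_n^\infty({\bf D}^m_f)$, and matching its Fourier coefficients against the prescribed $c_\alpha$; everything else in the theorem is essentially formal once this is done.

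With the equivalences in place, $\Phi$ is well defined, injective by uniqueness of Fourier coefficients on $F_n^\infty({\bf D}^m_f)$, and surjective by (iii) $\Rightarrow$ (i), with multiplicativity coming from a Cauchy-product check on formal series. For the complete-isometry claim, fix $[G_{ij}]\in M_p(H^\infty({\bf D}_{f,\text{\rm rad}}^m))$. Evaluating on $X=(rW_1,\dots,rW_n)\in {\bf D}_{f,\text{\rm rad}}^m(F^2(H_n))$ and using the matrix-valued analogue of \eqref{many eq} gives $\|[\Phi(G_{ij})]\|_p=\sup_r\|[\Phi(G_{ij})](rW_1,\dots,rW_n)\|=\sup_r\|[G_{ij}(rW_1,\dots,rW_n)]\|\le\|[G_{ij}]\|_p$. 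Conversely, for $X=rT\in {\bf D}_{f,\text{\rm rad}}^m(\cH)$ with $T\in {\bf D}_f^m(\cH)$, the matrix $[\sum_\alpha c_\alpha^{ij}r^{|\alpha|}W_\alpha]$ lies in $M_p(\cA_n({\bf D}^m_f))$, so \eqref{vn1} yields $\|[G_{ij}(X)]\|\le\|[\sum_\alpha c_\alpha^{ij}r^{|\alpha|}W_\alpha]\|\le\|[\Phi(G_{ij})]\|_p$. The explicit formulas $\Phi(G)=\text{\rm SOT-}\lim_{r\to 1}G(rW_1,\dots,rW_n)$ and $\Phi^{-1}(\varphi)={\bf B}[\varphi]$ then read off from \eqref{sot} and the identification $G(X)={\bf B}_X[\varphi]$ established above.
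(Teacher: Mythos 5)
Your proposal is correct in outline and reaches all the stated conclusions, but it handles the crucial direction (ii) $\Rightarrow$ (iii) by a genuinely different mechanism than the paper. The paper never uses compactness: from $\sup_{0\le r<1}\|G(rW_1,\ldots,rW_n)\|<\infty$ it first evaluates $G(rW)$ at the vacuum vector to get $\sum_\alpha |c_\alpha|^2/b_\alpha^{(m)}<\infty$ (so the formal series acts on polynomials), and then shows the action on polynomials is bounded by a contradiction argument using $\|G_r(W)q-G(W)q\|\to 0$ for each polynomial $q$; this puts $G(W_1,\ldots,W_n)$ in $F_n^\infty({\bf D}^m_f)$ directly from the definition of that algebra. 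You instead extract a WOT-limit $\varphi$ of the bounded family $G(r_kW)$ and match Fourier coefficients against the vacuum, which is fine, but the step you yourself flag -- certifying $\varphi\in F_n^\infty({\bf D}^m_f)$ -- does need an argument beyond "dual-space structure": either invoke the $w^*$-closedness of the Hardy algebra from \cite{Po-Berezin}, or note that each $G(r_kW)$ lies in $\cA_n({\bf D}^m_f)$ (they are not polynomials, as you call them, but norm limits of polynomials), choose polynomial approximants within $1/k$ and diagonalize to exhibit $\varphi$ as a bounded sequential WOT-limit of polynomials in $W_1,\ldots,W_n$, which is exactly the characterization of $F_n^\infty({\bf D}^m_f)$ quoted in Section 1; the paper's coefficient-based route is what lets it avoid this issue altogether. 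One further small point: your surjectivity claim "by (iii) $\Rightarrow$ (i)" is incomplete as stated, because the equivalences presuppose that $G$ is already a free holomorphic function, whereas for surjectivity one starts from an arbitrary $\varphi=\sum_\alpha c_\alpha W_\alpha\in F_n^\infty({\bf D}^m_f)$ and must first check that $\sum_\alpha c_\alpha Z_\alpha$ is free holomorphic; the paper does this in one line from $\sum_\alpha|c_\alpha|^2/b_\alpha^{(m)}<\infty$ via Theorem \ref{free-ho}(iii), and you should add it. Your identification $G(X)={\bf B}_X[\varphi]$ via \eqref{Be-transf}, purity of radial points, and norm continuity of $G$ along radii, and your complete-isometry computation via \eqref{vn1}, \eqref{2ine}, \eqref{GrG} and the matrix form of \eqref{many eq}, match the paper's argument.
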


\begin{proof} First we need to show that the map $\Phi$ is well-defined. Let $G\in H^\infty({\bf D}_{f,\text{\rm rad}}^m)$ and let
$$G(X_1,\ldots, X_n):=\sum\limits_{k=0}^\infty \sum\limits_{|\alpha|=k}c_\alpha
X_\alpha, \qquad (X_1,\ldots, X_n)\in {\bf D}_{f,\text{\rm rad}}^m
(\cH),
$$
be its representation on a separable infinite dimensional  Hilbert
space $\cH$. Since $(rW_1,\ldots, rW_n)\in {\bf D}_{f,\text{\rm
rad}}^m(F^2(H_n))$,   we have
$$
\sup_{0\leq r<1}\|G(rW_1,\ldots, rW_n)\|\leq \|G\|_\infty<\infty.
$$
 Hence,  taking into account
relation \eqref{WbWb}, we deduce that
\begin{equation*}
\begin{split}
\sum_{\alpha\in \FF_n^+} r^{2|\alpha|} |c_\alpha|^2
\frac{1}{b_\alpha^{(m)}}&=
\left\|\sum_{\alpha\in \FF_n^+} r^{|\alpha|} c_\alpha W_\alpha(1)\right\|\\
&\leq \sup\limits_{0\leq r<1}\|G(rW_1,\ldots, rW_n)\|<\infty
\end{split}
\end{equation*}
for any $0\leq r<1$. Consequently, $\sum\limits_{\alpha\in \FF_n^+}
|c_\alpha|^2\frac{1}{b_\alpha^{(m)}}<\infty$, which shows that $
G(W_1,\ldots, W_n)p$ is in the full Fock space $F^2(H_n)$ for any
polynomial $p\in F^2(H_n)$. Now assume that
  $G(W_1,\ldots, W_n)\notin F_n^\infty({\bf
D}^m_f)$.
 According to the definition of $F_n^\infty({\bf
D}^m_f)$,  for any fixed  positive
  number $M$, there exists a polynomial $q\in F^2(H_n)$ with $\|q\|=1$ such that
$$
\|G(W_1,\ldots, W_n)q\|>M.
$$
Since $\|G_r(W_1,\ldots, W_n)(1)-G(W_1,\ldots, W_n)(1)\|\to 0$ as
$r\to 1$, we have
$$\|G(W_1,\ldots, W_n)q-G_r(W_1,\ldots, W_n)q\| \to 0,
\quad \text{ as }\ r\to 1.
$$
Consequently, there is $r_0\in (0,1)$ such that $
\|G_{r_0}(W_1,\ldots, W_n) q\|> M$,  which implies $
\|G_{r_0}(W_1,\ldots, W_n)\|
>M. $ Since $M>0$ is arbitrary, we deduce that
$$
\sup_{0\leq r<1}\|G(rW_1,\ldots, rW_n)\|=\infty,
$$
which is a contradiction. Therefore, $G(W_1,\ldots, W_n)\in
F_n^\infty({\bf D}^m_f)$, which proves that the map $\Phi$ is
well-defined. Moreover,  due to relation \eqref{vnr}, we have
$\|G(X_1,\ldots, X_n)\|\leq \|G(rW_1,\ldots, rW_n)\|$  for any
$(X_1,\ldots, X_n)\in {\bf D}_{f,r}^m(\cH)$.  Using  now   relation
\eqref{many eq}, we deduce that
$$
\|G(W_1,\ldots, W_n)\|=\sup_{0\leq r<1}\|G(rW_1,\ldots,
rW_n)\|=\|G\|_\infty.
$$
Therefore, $\Phi$ is  a well-defined isometric linear map. Moreover,
 relation \eqref{sot},  implies
$$
\Phi(G)=G(W_1,\ldots, G_m)=\text{\rm SOT-}\lim\limits_{r\to
1}G_r(W_1,\ldots, W_n).
$$

 We show now
that $\Phi$ is a surjective map. To this end, let $
\varphi:=\sum_{\alpha\in \FF_n^+} c_\alpha W_\alpha$ be in
$F_n^\infty({\bf D}^m_f)$. Then $\sum_{\alpha\in \FF_n^+}
|c_\alpha|^2 \frac{1}{b_\alpha^{(m)}}<\infty$, which implies
\begin{equation*}
 \limsup\limits_{k\to\infty}
\left(\sum_{|\alpha|=k}
|c_\alpha|^2\frac{1}{b_\alpha^{(m)}}\right)^{\frac{1} {2k}}\leq 1.
\end{equation*}
Using now Theorem \ref{free-ho}, we deduce that $G(Z_1,\ldots,
Z_n):=\sum\limits_{\alpha\in \FF_n^+}
 c_\alpha Z_\alpha$
  is a free holomorphic function on the noncommutative domain ${\bf D}_{f,\text{\rm
  rad}}^m$.
   Due to inequalities  \eqref{2ine}, and \eqref{GrG}, we have
  $$
  \|G(X_1,\ldots, X_n)\|\leq \|G(rW_1,\ldots, rW_n)\|\leq \|G(W_1,\ldots, W_n)\|
  $$
  for any $(X_1,\ldots, X_n)\in {\bf D}_{f,r}^m(\cH)$ and $r\in [0,1)$. Hence, we
  deduce that
 $$ \sup_{(X_1,\ldots,
X_n)\in {\bf D}_{f,\text{\rm rad}}^m(\cH)} \|G(X_1,\ldots, X_n)\|\leq \|G(W_1,\ldots, W_n)\|<\infty,
$$
  which proves that $G\in H^\infty({\bf D}_{f,\text{\rm rad}}^m)$.
   This  shows that the map $\Phi$ is surjective.
   Therefore, we have proved
  that $\Phi$ is an isometric isomorphism of operator algebras.
  Using the same techniques and passing to matrices, one can prove that $\Phi$ is a
  completely isometric isomorphism.

Moreover,  note
   that if $X:=(X_1,\ldots, X_n)\in {\bf D}_{f,\text{\rm rad}}^m$,
   then  $G(X_1,\ldots, X_n)=\sum_{k=0}^\infty \sum_{|\alpha|=k}
   c_\alpha X_\alpha$ is convergent in the operator norm topology.
   Due to the properties of the noncommutative Berezin transform,
   for any $r\in [0,1)$, we have
   \begin{equation*}
   \begin{split}
\sum_{k=0}^\infty \sum_{|\alpha|=k}
   c_\alpha r^{|\alpha|}X_\alpha &=\sum_{k=0}^\infty {\bf B}_X\left[ \sum_{|\alpha|=k}
   c_\alpha r^{|\alpha|}W_\alpha\right]\\
   &={\bf B}_X\left[\sum_{k=0}^\infty \sum_{|\alpha|=k}
   c_\alpha r^{|\alpha|}W_\alpha\right]\\
   &={\bf B}_X[\varphi_r]={K_{X,f}^{(m)}}^* [\varphi_r\otimes
   I_\cH]K_{X,f}^{(m)}.
   \end{split}
   \end{equation*}
Recall that $\|\varphi_r\|\leq \|\varphi\|$ for any $r\in [0,1)$ and
$\varphi=\text{\rm SOT-}\lim\limits_{r\to 1}\varphi_r$.  Passing to
the limit as $r\to 1$ in the equalities above, and using the
continuity of the free holomorphic function $G$ on ${\bf
D}_{f,\text{\rm rad}}^m$, we obtain
$$
G(X_1,\ldots, X_n)={\bf B}_X[\varphi], \qquad X:=(X_1,\ldots,
X_n)\in {\bf D}_{f,\text{\rm rad}}^m.
$$
The equivalences mentioned in the theorem  can be easily deduced
from the considerations above and the properties of the
noncommutative Berezin transform.
  The proof is complete.
\end{proof}

 We  denote by  $A({\bf
D}_{f,\text{\rm rad}}^m)$   the set of all  elements $G$
  in $Hol({\bf
D}_{f,\text{\rm rad}}^m)$   such that the mapping
$${\bf
D}_{f,\text{\rm rad}}^m(\cH)\ni (X_1,\ldots, X_n)\mapsto
G(X_1,\ldots, X_n)\in B(\cH)$$
 has a continuous extension to  $[{\bf
D}_{f,\text{\rm rad}}^m(\cH)]^-={\bf
D}_{f}^m(\cH)$ for any Hilbert space $\cH$. One can show that  $A({\bf
D}_{f,\text{\rm rad}}^m)$ is a  Banach algebra under pointwise
multiplication and the norm $\|\cdot \|_\infty$.  Moreover, we can
identify the domain algebra $\cA_n({\bf D}^m_f)$ with the subalgebra
 $A({\bf D}_{f,\text{\rm rad}}^m)$.

\begin{theorem}\label{A-infty} The map
$ \Phi:A({\bf D}_{f,\text{\rm rad}}^m)\to \cA_n({\bf D}^m_f) $
defined by
$$
\Phi\left(\sum\limits_{\alpha\in \FF_n^+} c_\alpha
Z_\alpha\right):=\sum\limits_{\alpha\in \FF_n^+} c_\alpha W_\alpha
$$
is a completely isometric isomorphism of operator algebras.
Moreover, if  $G:=\sum\limits_{\alpha\in \FF_n^+} c_\alpha Z_\alpha$
is  a free holomorphic function on the  domain ${\bf
D}_{f,\text{\rm rad}}^m$,
then the following statements are equivalent:
 \begin{enumerate}
 \item[(i)]$G\in A({\bf D}_{f,\text{\rm
rad}}^m)$;
\item[(ii)] $G(rW_1,\ldots,
rW_n):=\sum\limits_{k=0}^\infty\sum\limits_{|\alpha|=k}c_\alpha r^{|\alpha|} W_\alpha$ is convergent in the operator norm topology  as $r\to 1$;
\item[(iii)]
there exists $\varphi\in \cA_n({\bf D}^m_f)$ with $G={\bf
B}[\varphi]$.
\end{enumerate}

In this case,
$$
\Phi(G)=\lim_{r\to 1}G(rW_1,\ldots,
rW_n)  \quad \text{ and } \quad  \Phi^{-1}(\varphi)={\bf B}[\varphi],\quad \varphi\in \cA_n({\bf D}^m_f),
$$
where ${\bf B}$ is the  noncommutative Berezin
transform  associated with the noncommutative domain ${\bf D}^m_f$.
\end{theorem}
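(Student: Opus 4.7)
The plan is to mirror the proof of Theorem \ref{f-infty}, replacing SOT-convergence by norm convergence and $F_n^\infty({\bf D}^m_f)$ by its norm closure $\cA_n({\bf D}^m_f)$. The two structural inputs are the assumed norm continuity of elements of $A({\bf D}_{f,\text{\rm rad}}^m)$ on the closed domain ${\bf D}_f^m(\cH)$, together with polynomial density in $\cA_n({\bf D}^m_f)$ combined with the noncommutative von Neumann inequality \eqref{vn1}.

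For well-definedness and complete isometry, I would take $G=\sum c_\alpha Z_\alpha$ in $A({\bf D}_{f,\text{\rm rad}}^m)$ and evaluate it on $\cH=F^2(H_n)$. Since $(rW_1,\dots,rW_n)\in{\bf D}_{f,\text{\rm rad}}^m(F^2(H_n))$ and $(rW)\to(W)$ in operator norm as $r\to 1$, the continuous-extension hypothesis delivers a norm limit $\Phi(G):=\lim_{r\to 1}G(rW_1,\dots,rW_n)$. By Theorem \ref{free-ho}(iv) each $G(rW)$ is a norm-convergent sum of polynomials in $W_1,\dots,W_n$, so $G(rW)\in\cA_n({\bf D}_f^m)$; the norm closure then forces $\Phi(G)\in\cA_n({\bf D}_f^m)$. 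Combining \eqref{vnr} and \eqref{many eq} and passing $r\to 1$ yields $\|\Phi(G)\|=\|G(W)\|=\sup_{0\le r<1}\|G(rW)\|=\|G\|_\infty$, and the identical argument at the matrix level makes $\Phi$ completely isometric.

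For surjectivity, given $\varphi\in\cA_n({\bf D}^m_f)$ I would choose polynomials $q_k(W)\to\varphi$ in norm. The inequality \eqref{vn1} gives $\|q_k(X)-q_j(X)\|\le\|q_k(W)-q_j(W)\|$ uniformly in $X\in{\bf D}_f^m(\cH)$, so $\{q_k(X)\}$ is uniformly Cauchy on the closed domain and its limit $\widetilde G$ is norm continuous there. For $X\in{\bf D}_{f,\text{\rm rad}}^m(\cH)$, the Berezin transform $\cB_X$ restricted to $\cA_n({\bf D}_f^m)$ is a completely contractive homomorphism sending $q_k(W)\mapsto q_k(X)$, so $\widetilde G(X)={\bf B}_X[\varphi]$; by Theorem \ref{f-infty} this coincides with the free holomorphic function attached to the Fourier coefficients of $\varphi$. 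Thus $\widetilde G$ continuously extends an element $G\in Hol({\bf D}_{f,\text{\rm rad}}^m)$ to ${\bf D}_f^m(\cH)$, meaning $G\in A({\bf D}_{f,\text{\rm rad}}^m)$ and $\Phi(G)=\varphi$. The equivalences then fall out: (i)$\Rightarrow$(ii) from the well-definedness step, (ii)$\Rightarrow$(iii) because the norm limit of $G(rW)$ lies in the norm-closed algebra $\cA_n({\bf D}^m_f)$, and (iii)$\Rightarrow$(i) from the surjectivity argument just sketched.

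The main obstacle will be the verification that the candidate inverse $\varphi\mapsto{\bf B}[\varphi]$ actually lands in $A({\bf D}_{f,\text{\rm rad}}^m)$, that is, that the continuous extension reaches the whole closed domain ${\bf D}_f^m(\cH)$ rather than only the radial part. This rests precisely on the interplay between polynomial density in $\cA_n({\bf D}_f^m)$ and the uniform bound on ${\bf D}_f^m(\cH)$ furnished by \eqref{vn1}; once both are in hand, uniform Cauchyness on the closed domain is automatic, and the Berezin transform identifies the limit with the free holomorphic function supplied by Theorem \ref{f-infty}.
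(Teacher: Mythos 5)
Your proposal is correct and follows essentially the same route as the paper: the forward direction uses norm-closedness of $\cA_n({\bf D}_f^m)$ together with the continuous extension evaluated along $rW\to W$, and the (complete) isometry is inherited from Theorem \ref{f-infty} via \eqref{vnr} and \eqref{many eq}. The only cosmetic difference is in the converse: you obtain continuity of ${\bf B}[\varphi]$ on the closed domain from uniform Cauchyness of polynomial approximants $q_k(X)$ via \eqref{vn1}, whereas the paper runs an $\epsilon/3$ argument with the radial approximants $\varphi_{r_0}=\varphi(r_0W_1,\ldots,r_0W_n)$ --- the same density-plus-von-Neumann mechanism.
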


\begin{proof}  Assume that $G\in A({\bf D}_{f,\text{\rm
rad}}^m)$.  Since $(rW_1,\ldots, rW_n)\in {\bf D}_{f,\text{\rm
rad}}^m (F^2(H_n))$ for $r\in [0,1)$, we deduce that
$\lim\limits_{r\to \infty} G(rW_1,\ldots, rW_n)$ exists in the
operator norm topology. Using  the fact that $G(rW_1,\ldots, rW_n)$
is in the domain algebra $ \cA_n({\bf D}^m_f)$, which is closed in
the operator norm topology, we have
$$
\varphi :=\lim\limits_{r\to \infty} G(rW_1,\ldots, rW_n)\in
\cA_n({\bf D}^m_f).
$$
  On the other
hand, due to Theorem \ref{f-infty}, we deduce that $\Phi(G):=\sum\limits_{\alpha\in \FF_n} c_\alpha W_\alpha$ is in $
F_n^\infty(\cD_f)$  and
$$\Phi(G)=\text{\rm
SOT-}\lim_{r\to 1} G(rW_1,\ldots, rW_n). $$
Therefore, we have
$$
\Phi(G)=\varphi  =\lim\limits_{r\to 1} G(rW_1,\ldots, rW_n)\in
\cA_n({\bf D}^m_f)
$$
 and $$G(X_1,\ldots, X_n)= \lim\limits_{r\to 1}{\bf B}_X[  G(rW_1,\ldots,
rW_n)]={\bf B}_X[\varphi],\qquad X:=(X_1,\ldots, X_n)\in {\bf D}_{f,\text{\rm
rad}}^m(\cH).
$$
Here we used the continuity of the Berezin transform  in the
operator norm topology.

Now, let $\varphi\in \cA_n({\bf D}^m_f)\subset F_n^\infty({\bf
D}^m_f)$ have the Fourier representation $\varphi=\sum_{\alpha\in
\FF_n^+} c_\alpha W_\alpha$.  Then, for any  $n$-tuple $(Y_1,\ldots,
Y_n)\in {\bf D}_{f}^m(\cH)$,
\begin{equation}
\label{tildaG} \tilde G(Y_1,\ldots, Y_n):=\lim_{r\to 1}{\bf
B}_{rY}[\varphi] =\lim_{r\to 1} \varphi(rY_1,\ldots, rY_n)
\end{equation}
exists in the operator norm  and
$$
\|\tilde G(Y_1,\ldots, Y_n)\|\leq \|\varphi\|\quad \text{ for any } \ (Y_1,\ldots, Y_n)\in {\bf
D}_{f}^m(\cH).
$$
 Note also that $\tilde G$ is an extension of the free holomorphic function
 $$G(X_1,\ldots, X_n)={\bf B}_X[\varphi]=\sum_{k=0}^\infty\sum_{|\alpha|=k}
  c_\alpha X_\alpha,\qquad (X_1,\ldots, X_n)\in  {\bf D}_{f,\text{\rm
rad}}^m(\cH).
$$
Indeed, if $ (X_1,\ldots, X_n)\in {\bf D}_{f,\text{\rm
rad}}^m(\cH)$, then
\begin{equation*}
\begin{split}
\tilde G(X_1,\ldots, X_n)&=\lim_{r\to 1}\varphi(rX_1,\ldots, rX_n)\\
&=\lim_{r\to 1}G(rX_1,\ldots, rX_n)=G(X_1,\ldots, X_n).
\end{split}
\end{equation*}
The last equality is due to the fact that $G$ is continuous on ${\bf
D}_{f,\text{\rm rad}}^m(\cH)$.

Let us prove that $\tilde G:{\bf D}_{f}^m(\cH)\to B(\cH)$ is a
continuous map. Since $\varphi\in \cA_n({\bf D}^m_f)$,  for any
$\epsilon>0$ there exists $r_0\in [0,1)$ such that
$\|\varphi-\varphi(r_0W_1,\ldots, r_0 W_n)\|<\epsilon$. Applying the
noncommutative von Neumann inequality to $ \varphi-\
\varphi_{r_0}\in \cA_n({\bf D}^m_f)$ and using relation
\eqref{tildaG}, we deduce that
\begin{equation}
\label{tild-f}
\|\tilde G(T_1,\ldots, T_n)-\varphi_{r_0}
(T_1,\ldots, T_n)\|\leq \|\varphi-\varphi_{r_0} \|< \frac{\epsilon}{3}
\end{equation}
for any $(T_1,\ldots, T_n)\in {\bf D}_{f}^m(\cH)$. We recall that
$G$ is a continuous function on ${\bf D}_{f,\text{\rm rad}}^m(\cH)$.
Therefore, there exists $\delta>0$ such that
$$
\|G_{r_0}(T_1,\ldots, T_n)-G_{r_0}(Y_1,\ldots, Y_n)\|<\frac{\epsilon}{3}
$$
 for any $n$-tuple  $(Y_1,\ldots, Y_n)$   in ${\bf
D}_{f}^m(\cH)$     such that  $\|(T_1-Y_1,\ldots,
T_n-Y_n)\|<\delta$. Hence, and using relation \eqref{tild-f}, we
have
\begin{equation*}
\begin{split}
\|\tilde G(T_1,\ldots, T_n)-\tilde G(Y_1,\ldots, Y_n)\|
&\leq \|\tilde G(T_1,\ldots, T_n)-\varphi_{r_0}(T_1,\ldots, T_n)\|\\
&\qquad + \| G_{r_0}(T_1,\ldots, T_n)- G_{r_0}(Y_1,\ldots, Y_n)\|\\
&\qquad + \|\varphi_{r_0}(Y_1,\ldots, Y_n)-\tilde G(Y_1,\ldots, Y_n)\|
<\epsilon,
\end{split}
\end{equation*}
whenever $\|(T_1-Y_1,\ldots, T_n-Y_n)\|<\delta$. This proves the
continuity of $\tilde G$ on ${\bf D}_{f}^m(\cH)$. Therefore, $G\in
A({\bf D}_{f}^m)$ and, moreover, we have
$$
\Phi(G)= \varphi  \quad \text{ and } \quad \Phi^{-1}(\varphi)={\bf
B}[\varphi],\quad \varphi\in \cA_n({\bf D}^m_f).
$$ Using now Theorem \ref{f-infty}, we deduce that
he map $ \Phi:A({\bf D}_{f,\text{\rm rad}}^m)\to \cA_n({\bf D}^m_f)
$ is  a completely isometric isomorphism of operator algebras. The
equivalences mentioned in the theorem  can be easily deduced from
the considerations above and the properties of the noncommutative
Berezin transform. This completes the proof.
  \end{proof}

Due to Theorem \ref{A-infty} and Corollary \ref{rep-finite1}, we
deduce the following result.

\begin{corollary} \label{rep-finite2} If  $p\in \NN$ and
$F(X_1,\ldots, X_n):=\sum\limits_{k=0}^\infty\sum\limits_{|\alpha|=k
}
  c_\alpha X_\alpha $ is in $A({\bf D}_{f,\text{\rm rad}}^m)$,
  then its representation on $\CC^p$,  i.e., the map
  $F_p$ defined by
  $$
  \CC^{np^2}\supset {\bf
D}_{f}^m(\CC^p)\ni (\Lambda_1,\ldots, \Lambda_n)\mapsto
F(\Lambda_1,\ldots, \Lambda_n)\in M_{ p}\subset \CC^{p^2}
$$
is a continuous map on  ${\bf D}_{f}^m(\CC^p)$ and  holomorphic
 on the interior of ${\bf D}_{f}^m(\CC^p)$.
\end{corollary}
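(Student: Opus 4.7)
The plan is to assemble the statement directly from the two cited results, since everything needed is already in place.

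First I would obtain continuity. By definition of $A({\bf D}_{f,\text{\rm rad}}^m)$, for \emph{any} Hilbert space $\cH$ the map
$${\bf D}_{f,\text{\rm rad}}^m(\cH)\ni (X_1,\ldots, X_n)\mapsto F(X_1,\ldots, X_n)\in B(\cH)$$
has a continuous extension to the closure $[{\bf D}_{f,\text{\rm rad}}^m(\cH)]^- = {\bf D}_f^m(\cH)$. Specializing to $\cH=\CC^p$ (where $B(\cH)=M_p$), one gets that $F_p$ is continuous on ${\bf D}_f^m(\CC^p)\subset \CC^{np^2}$. This handles the first assertion.

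For holomorphy, I would invoke Corollary \ref{rep-finite1}. Since $A({\bf D}_{f,\text{\rm rad}}^m) \subset Hol({\bf D}_{f,\text{\rm rad}}^m)$ by construction (the extension condition presupposes $F$ is free holomorphic), $F$ is in particular a free holomorphic function on ${\bf D}_{f,\text{\rm rad}}^m$ with scalar coefficients. Corollary \ref{rep-finite1} then asserts precisely that its representation $F_p$ on $\CC^p$ is holomorphic on $\mathrm{Int}({\bf D}_f^m(\CC^p))$. One needs only to note, as observed in the text, that $\mathrm{Int}({\bf D}_f^m(\CC^p)) \subset {\bf D}_{f,\text{\rm rad}}^m(\CC^p)$, so $F_p$ is genuinely defined on this interior and the corollary applies verbatim.

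There is essentially no obstacle: the two cited results were designed to be combined here. The only place one might pause is the compatibility between the definitions of $F$ on the radial domain and its continuous extension to the closure, but this is guaranteed by the continuity statement itself together with Theorem \ref{A-infty}, which identifies $A({\bf D}_{f,\text{\rm rad}}^m)$ with $\cA_n({\bf D}_f^m)$ via the Berezin transform and thereby ensures that the extension agrees with the free holomorphic series on the radial part. Thus the proof is a two-line deduction and I would present it as such.
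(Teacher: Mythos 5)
Your proposal is correct and matches the paper, whose entire proof of this corollary is the citation ``Due to Theorem \ref{A-infty} and Corollary \ref{rep-finite1}'': continuity on ${\bf D}_f^m(\CC^p)$ comes from the continuous-extension property of elements of $A({\bf D}_{f,\text{\rm rad}}^m)$ (equivalently Theorem \ref{A-infty}), and holomorphy on the interior comes from Corollary \ref{rep-finite1}. You have simply made the same two-step deduction explicit.
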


We remark that one can obtain operator-valued versions of Theorem
\ref{f-infty}  and Theorem \ref{A-infty}, which we summarize  in
what follows.
Let $\cE$ be a separable Hilbert space. We denote by $Hol_\cE({\bf
D}_{f,\text{\rm rad}}^m)$ the set of all free holomorphic functions
on the noncommutative ball ${\bf D}_{f,\text{\rm rad}}^m$ and
coefficients in $B(\cE)$. Let ${ H}_\cE^\infty({\bf D}_{f,\text{\rm
rad}}^m)$ denote the set of all elements $F$ in $Hol_\cE({\bf
D}_{f,\text{\rm rad}}^m)$  such that
$$
\|F\|_\infty:=\sup  \|F(X_1,\ldots, X_n)\|<\infty,
$$
where the supremum is taken over all $n$-tuples  of operators
$(X_1,\ldots, X_n)\in {\bf D}_{f,\text{\rm rad}}^m(\cH)$  and any
Hilbert space $\cH$. The noncommutative Hardy space
  ${ H}_\cE^\infty({\bf D}_{f,\text{\rm rad}}^m)$  can be identified
  to
$  F_n^\infty({\bf D}_{f }^m)\bar\otimes B(\cE)$, the weakly closed
operator algebra generated by the spatial tensor product. More
precisely, a bounded free holomorphic function $F$ on ${\bf
D}_{f,\text{\rm rad}}^m$ with coefficients in $B(\cE)$  is uniquely
determined by its {\it model boundary function} $\widetilde
F(W_1,\ldots, W_n)\in F_n^\infty({\bf D}_{f }^m)\bar \otimes B(\cE)$
defined by
$$\widetilde F=\widetilde F(W_1,\ldots, W_n):=\text{\rm SOT-}\lim_{r\to 1}
F(rW_1,\ldots, rW_n). $$ Moreover, $F$ is  the noncommutative
Berezin  transform of $\widetilde F(W_1,\ldots, W_n)$ at
$X:=(X_1,\ldots, X_n)\in{\bf D}_{f,\text{\rm rad}}^m(\cH)$, i.e.,
$$
F(X_1,\ldots, X_n)= ({\bf B}_X\otimes I_\cE)[\widetilde
F(W_1,\ldots, W_n)].
$$

We  denote by  $A_\cE({\bf D}_{f,\text{\rm rad}}^m)$   the set of
all elements $\varphi$
  in $Hol_\cE({\bf D}_{f,\text{\rm rad}}^m)$   such that the mapping
$${\bf
D}_{f,\text{\rm rad}}^m(\cH)\ni (X_1,\ldots, X_n)\mapsto
\varphi(X_1,\ldots, X_n)\in B(\cH)$$
 has a continuous extension to  $[{\bf
D}_{f,\text{\rm rad}}^m(\cH)]^-={\bf D}_{f}^m(\cH)$ for any Hilbert
space $\cH$. One can show that  $A_\cE({\bf D}_{f,\text{\rm
rad}}^m)$ is a  Banach algebra under pointwise multiplication and
the norm $\|\cdot \|_\infty$.  Moreover, we can identify the tensor
algebra $\cA_n({\bf D}^m_f)\bar\otimes_{min}  B(\cE)$ (the closed
operator algebra generated by the spatial tensor product) with the
subalgebra
 $A_\cE({\bf D}_{f,\text{\rm rad}}^m)$.
Since the proofs of these results are very similar to those of
Theorem \ref{f-infty}  and Theorem \ref{A-infty}, we shall omit
them.

 \bigskip

\section{ Compositions of   free holomorphic functions}

In this section we present several results concerning the
composition of free holomorphic functions on noncommutative domains
${\bf D}_{f,\text{\rm rad}}^m$. These results are used, throughout
this paper,  to study free biholomorphic functions.

Let $f$ be a positive regular  free holomorphic function with $n$
indeterminates and
 consider a free holomorphic function $\Phi:{\bf D}_{f,\text{\rm rad}}^m(\cH)\to [B(\cH)
\bar\otimes_{min} B(\cY)]^p$.  Then we have $\Phi=(\Phi_1,\ldots,
\Phi_p)$, where each mapping $\Phi_j:{\bf D}_{f,\text{\rm
rad}}^m(\cH)\to B(\cH)\bar \otimes_{min} B(\cY)$  is a  free
holomorphic function with  representation
$$
\Phi_j(X)=\sum_{k=0}^\infty \sum_{\alpha\in \FF_n^+,|\alpha|=k}
X_\alpha\otimes B_{(\alpha)}^{(j)}, \qquad X:=(X_1,\ldots, X_n)\in
{\bf D}_{f,\text{\rm rad}}^m(\cH),
$$
for some $B_{(\alpha)}^{(j)}\in B(\cY)$,   $\alpha\in \FF_n^+$.
Assume  now that
$$ \Phi(X)\in{\bf D}_{g,\text{\rm rad}}^l(\cH\otimes \cY) \quad \text{ for any } \  X\in
 {\bf D}_{f,\text{\rm rad}}^m(\cH),
$$
where $g$ is  a positive regular free holomorphic function with $p$
indeterminates, and  $l\geq 1$. Consider a free holomorphic function
$F:{\bf D}_{g,\text{\rm rad}}^l(\cK)\to B(\cK)\bar \otimes_{min}
B(\cE)$ with standard representation
$$
F(Y_1,\ldots, Y_p):= \sum_{k=0}^\infty \sum_{\alpha\in
\FF_p^+,|\alpha|=k} Y_\alpha\otimes A_{(\alpha)}, \qquad
(Y_1,\ldots, Y_p)\in{\bf D}_{g,\text{\rm rad}}^l(\cK),
$$
for some  bounded operators $A_{(\alpha)}\in B(\cE)$, $\alpha\in
\FF_p^+$. Note that  it makes sense to define the mapping  $F\circ
\Phi:{\bf D}_{f,\text{\rm rad}}^m(\cH)\to B(\cH)\bar \otimes_{min}
B(\cY)\bar\otimes_{min} B(\cE)$ by setting
$$
(F\circ \Phi)(X_1,\ldots, X_n):=\sum_{k=0}^\infty \sum_{\alpha\in
\FF_p^+,|\alpha|=k} \Phi_\alpha(X_1,\ldots, X_n)\otimes
A_{(\alpha)}, \qquad (X_1,\ldots, X_n)\in {\bf D}_{f,\text{\rm
rad}}^m(\cH),
$$
where  the convergence is in the operator norm topology. In what
follows, we   prove
 that  $F\circ \Phi$ is a free holomorphic
function on ${\bf D}_{f,\text{\rm rad}}^m(\cH)$ with standard
representation
$$
(F\circ \Phi)(X_1,\ldots, X_n)=\sum_{k=0}^\infty \sum_{\sigma\in
\FF_n^+,|\sigma|=k} X_\sigma\otimes C_{(\sigma)}, \qquad
(X_1,\ldots, X_n)\in {\bf D}_{f,\text{\rm rad}}^m(\cH),
$$
where $C_{(\sigma)}\in B(\cY\otimes \cE)$ is defined by
$$
\left< C_{(\sigma)}x,y\right>=
\frac{b_\sigma^{(m)}}{r^{|\sigma|}}\left<( W_\sigma^*\otimes
I_{\cY\otimes \cE})  (F\circ \Phi)(rW_1,\ldots, rW_n)(1\otimes x),
1\otimes y\right>
$$
for any $\sigma\in \FF_n^+$ and  $x,y\in \cY\otimes \cE$.

First, we need the following result.

\begin{lemma} \label{range} Let $f$ and $g$ be positive regular free holomorphic functions
with $n$ and $p$ indeterminates, respectively, and let $m,l\geq 1$.
If $\Phi:{\bf D}_{f,\text{\rm rad}}^m(\cH)\to
[B(\cH)\bar\otimes_{min} B(\cY)]^p$ is a free holomorphic function,
then $\text{\rm range}\, \Phi\subseteq {\bf D}_{g,\text{\rm
rad}}^l(\cH\otimes \cY)$ if and only if
$$
\Phi(rW_1,\ldots, rW_n)\in {\bf D}_{g,\text{\rm
rad}}^l(F^2(H_n)\otimes \cY) \quad \text{ for any } \ r\in[0,1),
$$
where $(W_1,\ldots,
W_n)$ is the universal model associated with the noncommutative
domain ${\bf D}_f^m$.
\end{lemma}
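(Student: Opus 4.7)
The implication $(\Rightarrow)$ is immediate: the universal model satisfies $(W_1,\ldots,W_n)\in {\bf D}_f^m(F^2(H_n))$, so by the starlike hypothesis $(rW_1,\ldots,rW_n)\in r{\bf D}_f^m(F^2(H_n))\subseteq {\bf D}_{f,\text{\rm rad}}^m(F^2(H_n))$ for every $r\in[0,1)$; applying the range condition with $\cH=F^2(H_n)$ yields $\Phi(rW)\in {\bf D}_{g,\text{\rm rad}}^l(F^2(H_n)\otimes\cY)$. The content is in the converse, and my plan is to transfer the positivity conditions defining ${\bf D}_g^l$ from the universal-model side to an arbitrary point of the range by means of the noncommutative Berezin transform.

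Fix $X\in {\bf D}_{f,\text{\rm rad}}^m(\cH)$ and choose $r\in(0,1)$ with $X/r\in {\bf D}_f^m(\cH)$. By hypothesis there is $s\in(0,1)$ such that $\Phi(rW)/s\in {\bf D}_g^l(F^2(H_n)\otimes\cY)$; writing $g=\sum c_\alpha Z_\alpha$ and $\Psi_{g,Y}(B):=\sum c_\alpha Y_\alpha BY_\alpha^*$, this means $(id-\Psi_{g,\Phi(rW)/s})^k(I_{F^2(H_n)\otimes\cY})\ge 0$ for $k=1,\ldots,l$. Now consider the Berezin transform $\cB_{X/r}:\cS\to B(\cH)$ from \eqref{BT}: it is unital completely contractive, restricts to a completely contractive homomorphism on $\cA_n({\bf D}_f^m)$, and satisfies $\cB_{X/r}(W_\alpha W_\beta^*)=(X/r)_\alpha(X/r)_\beta^*$. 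Expanding elements of $\cA_n({\bf D}_f^m)$ in the generators $W_\alpha$ gives the hereditary identity $\cB_{X/r}(ab^*)=\cB_{X/r}(a)\cB_{X/r}(b)^*$ for every $a,b\in\cA_n({\bf D}_f^m)$; in particular $\cB_{X/r}(rW_i)=X_i$. Amplifying by $id_{B(\cY)}$ and using that each $\Phi_j(rW)$ lies in $\cA_n({\bf D}_f^m)\bar\otimes_{min}B(\cY)$ by Theorem \ref{free-ho}, the map $\cB_{X/r}\otimes id$ sends $\Phi(rW)_\alpha$ to $\Phi(X)_\alpha$ for every $\alpha\in\FF_p^+$ and, via the tensored hereditary identity, sends $\Phi(rW)_\alpha\Phi(rW)_\alpha^*$ to $\Phi(X)_\alpha\Phi(X)_\alpha^*$. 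Expanding $(id-\Psi_{g,Y})^k(I)$ as a weighted sum of terms of this shape and pushing $\cB_{X/r}\otimes id$ through, I expect to arrive at
$$
(\cB_{X/r}\otimes id)\bigl[(id-\Psi_{g,\Phi(rW)/s})^k(I)\bigr]=(id-\Psi_{g,\Phi(X)/s})^k(I),
$$
and since $\cB_{X/r}\otimes id$ is a positive map, the right-hand side is positive for $k=1,\ldots,l$. This gives $\Phi(X)/s\in {\bf D}_g^l(\cH\otimes\cY)$, so $\Phi(X)\in {\bf D}_{g,\text{\rm rad}}^l(\cH\otimes\cY)$.

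The main technical obstacle is the last displayed identity: $\Psi_{g,Y}^j(I)$ is a weak-operator limit of the finite partial sums $\sum_{|\gamma|\le N}c_{\gamma_1}\cdots c_{\gamma_j}Y_{\gamma_j\cdots\gamma_1}Y_{\gamma_j\cdots\gamma_1}^*$, so passing $\cB_{X/r}\otimes id$ through this limit requires justification. The plan is to establish the identity first on the truncations, where every term sits inside the operator system generated by $\cA_n({\bf D}_f^m)\bar\otimes_{min}B(\cY)$ and its adjoint (and the hereditary rule is a purely algebraic statement), and then to control the tails using the operator-norm bounds from Theorem \ref{free-ho} together with the uniform bound $\Psi_{g,\Phi(rW)/s}(I)\le I$ already provided by $\Phi(rW)/s\in {\bf D}_g^l(F^2(H_n)\otimes\cY)$, invoking the WOT-continuity of $\cB_{X/r}$ on bounded sets.
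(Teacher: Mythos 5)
Your argument is essentially the paper's: both directions are handled the same way, with the converse obtained by scaling $X$ into ${\bf D}_f^m(\cH)$, scaling $\Phi(rW)$ by some $s\in(0,1)$ into ${\bf D}_g^l(F^2(H_n)\otimes\cY)$, and pushing the defect operators $(id-\Phi_{g,\frac{1}{s}\Phi(rW)})^k(I)$ through the (amplified) noncommutative Berezin transform at $X/r$ using its multiplicativity on $\cA_n({\bf D}_f^m)\bar\otimes_{min}B(\cY)$ and its complete positivity. The technical point you flag—interchanging $\cB_{X/r}\otimes id$ with the WOT-convergent series defining $\Phi_{g,\cdot}^j(I)$—is resolved in the paper by exactly the continuity-on-bounded-sets appeal you propose (and is clean if you choose $r$ so that $X/r$ still lies in the radial part, where the Berezin transform has the explicit kernel form), so your proposal is correct and follows the same route.
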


\begin{proof}
Since $\cH$ is a separable infinite dimensional Hilbert space and
$(rW_1,\ldots, rW_n)\in {\bf D}_{f,\text{\rm rad}}^m (F^2(H_n))$ the
direct implication is obvious. To prove the converse, assume that
$\Phi=(\Phi_1, \ldots, \Phi_p)$ is a free holomorphic function on
${\bf D}_{f,\text{\rm rad}}^m(\cH)$ and $\Phi(rW_1,\ldots,
rW_n)\in{\bf D}_{g,\text{\rm rad}}^l(F^2(H_n)\otimes \cY)$ for any
$r\in(0,1)$. Let $X:=(X_1,\ldots, X_n)$ be in ${\bf D}_{f,\text{\rm
rad}}^m(\cH)$. Then there exists $\gamma \in (0,1)$ such that $ X
\in {\bf D}_{f,\gamma}^m(\cH)$.  Since $\Phi$ is a free holomorphic
function on ${\bf D}_{f,\text{\rm rad}}^m(\cH)$,  for each
$j=1,\ldots,p$, the operator $\Phi_j(\gamma W_1,\ldots, \gamma W_n)$
is in $ \cA_n({\bf D}_f^m)\bar\otimes_{min} B(\cY)$  and there is
$s\in (0,1)$ such that
\begin{equation}
\label{fract} \frac{1}{s} \Phi(\gamma W):=\left( \frac{1}{s}
\Phi_1(\gamma W_1,\ldots, \gamma W_n),\ldots,
 \frac{1}{s} \Phi_p(\gamma W_1,\ldots, \gamma W_n)\right)\in {\bf D}_g^l (F^2(H_n)\otimes
 \cY).
 \end{equation}
 This implies $(id-\Phi_{g,\frac{1}{s} \Phi(\gamma W)})^k(I)\geq 0$   for  $ 1\leq k\leq
 l$.
Since $\Phi=(\Phi_1, \ldots, \Phi_p)$ is a free holomorphic function
on ${\bf D}_{f,\text{\rm rad}}^m(\cH)$, each $\Phi_j(X_1,\ldots,
X_n)$ is given by a series which is convergent in the operator
topology and so is $\Phi_j(\gamma W)\in \cA_n({\bf D}_f^m)$.
Therefore, using the properties of the noncommutative Berezin
transform, we  have
$$
\Phi_\alpha (X )\Phi_\alpha (X )^*={\bf
B}_{\frac{1}{\gamma}X}[\Phi_\alpha (\gamma W )\Phi_\alpha (\gamma W
)^*]
$$
for any  $ X \in {\bf D}_{f,\gamma}^m(\cH)$ and $\alpha\in \FF_n^+$.
Since, due to \eqref{fract},  $(id-\Phi_{g,\frac{1}{s} \Phi(\gamma
W)})^k(I)\leq I$ for $1\leq k\leq l$,  and using the fact that the
noncommutative Berezin transform is a completely positive map which
is SOT-continuous on bounded sets, we deduce that
$$
(id-\Phi_{g,\frac{1}{s} \Phi(X)})^k(I)={\bf
B}_{\frac{1}{\gamma}X}\left[(id-\Phi_{g,\frac{1}{s} \Phi(\gamma
W)})^k(I)\right]\geq 0,\qquad 1\leq k\leq l.
$$
 Hence,  we infer that  $\left( \frac{1}{s}
\varphi_1(X_1,\ldots, X_n),\ldots,
 \frac{1}{s} \varphi_p(X_1,\ldots, X_n)\right)\in {\bf D}_g^l(\cH\otimes \cY)$, which shows that the $n$-tuple
 $\left(  \varphi_1(X_1,\ldots, X_n),\ldots,
  \varphi_p(X_1,\ldots, X_n)\right)$ is in $ {\bf D}_{g,
  \text{\rm rad}}^l(\cH\otimes \cY)$.
This completes the proof.
\end{proof}

The next result shows that the composition of free holomorphic
functions  with operator-valued coefficients is a free holomorphic function.

\begin{theorem}
\label{compo} Let $f$ and $g$ be positive regular free holomorphic
functions with $n$ and $p$ indeterminates, respectively, and let
$m,l\geq 1$. Let $F:{\bf D}_{g,\text{\rm rad}}^l(\cK)\to B(\cK)\bar
\otimes B(\cE)$ and $\Phi: {\bf D}_{f,\text{\rm rad}}^m(\cH)\to
[B(\cH)\bar\otimes_{min}B(\cY)]^p$ be free holomorphic functions
such that $\text{\rm range}\, \Phi \subseteq {\bf D}_{g,\text{\rm
rad}}^l(\cH\otimes \cY)$.  Then $F\circ \Phi$ is a free holomorphic
function on ${\bf D}_{f,\text{\rm rad}}^m(\cH)$. If, in addition,
$F$ is bounded, then $F\circ \Phi$ is bounded and $\|F\circ
\Phi\|_\infty\leq \|F\|_\infty$.
\end{theorem}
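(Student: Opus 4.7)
The plan is to proceed in three steps: establish pointwise well-definedness and the norm bound; extract the putative Fourier coefficients $C_{(\sigma)}$ using the universal model; and verify that the resulting series represents $F \circ \Phi$ as a free holomorphic function on ${\bf D}_{f,\text{\rm rad}}^m$. For the first step, fix $X \in {\bf D}_{f,\text{\rm rad}}^m(\cH)$. The hypothesis $\Phi(X) \in {\bf D}_{g,\text{\rm rad}}^l(\cH \otimes \cY)$ together with the fact that $F$ is free holomorphic on the $g$-domain immediately yields operator-norm convergence of
$$
(F \circ \Phi)(X) = \sum_{k=0}^\infty \sum_{|\alpha|=k} \Phi_\alpha(X) \otimes A_{(\alpha)}.
$$
The boundedness assertion then falls out for free: if $F$ is bounded, then $\|F(Y)\| \le \|F\|_\infty$ for every $Y \in {\bf D}_{g,\text{\rm rad}}^l$, and substituting $Y = \Phi(X)$ gives $\|(F \circ \Phi)(X)\| \le \|F\|_\infty$, hence $\|F \circ \Phi\|_\infty \le \|F\|_\infty$.

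For the second step, I would apply Lemma \ref{range} to $\Phi$ to obtain $\Phi(rW_1, \ldots, rW_n) \in {\bf D}_{g,\text{\rm rad}}^l(F^2(H_n) \otimes \cY)$ for every $r \in [0,1)$. Thus $(F \circ \Phi)(rW)$ is itself given by an operator-norm convergent series sitting inside $\cA_n({\bf D}_f^m) \bar\otimes_{min} B(\cY \otimes \cE)$. Define $C_{(\sigma)} \in B(\cY \otimes \cE)$ by the extraction formula stated before the theorem. Its $r$-independence is checked by direct computation: substituting $\Phi_j(rW) = \sum_\gamma r^{|\gamma|} W_\gamma \otimes B_{(\gamma)}^{(j)}$ and using the identity $\langle W_\sigma^* W_\beta(1), 1 \rangle = \frac{1}{b_\sigma^{(m)}} \delta_{\sigma,\beta}$ from \eqref{WbWb} shows the right-hand side picks out precisely the formal coefficient of $W_\sigma$ in the expansion of $(F \circ \Phi)(W)$, which does not depend on $r$.

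For the third step, I want to show that $G(X) := \sum_\sigma X_\sigma \otimes C_{(\sigma)}$ is free holomorphic on ${\bf D}_{f,\text{\rm rad}}^m$ and coincides with $F \circ \Phi$. Free holomorphy follows from Theorem \ref{free-ho}(ii) once one observes that $\sum_k \sum_{|\sigma|=k} r^{|\sigma|} W_\sigma \otimes C_{(\sigma)}$ is, by construction of $C_{(\sigma)}$, term-by-term equal to the already norm-convergent series $(F \circ \Phi)(rW)$. The identity $G(X) = (F \circ \Phi)(X)$ at a general $X \in {\bf D}_{f,r}^m(\cH)$ then follows by applying the operator-norm continuous Berezin transform $\cB_{X/r} \otimes \mathrm{id}_{\cY \otimes \cE}$ (equivalently, the noncommutative von Neumann inequality \eqref{vn1}) to both sides of $(F \circ \Phi)(rW) = \sum_\sigma r^{|\sigma|} W_\sigma \otimes C_{(\sigma)}$, using that $\cB_{X/r} \otimes \mathrm{id}$ is a completely contractive homomorphism on $\cA_n({\bf D}_f^m) \bar\otimes_{min} B(\cY \otimes \cE)$ sending $W_\sigma \mapsto (X/r)_\sigma$ and mapping the product $\Phi_\alpha(rW)$ to $\Phi_\alpha(X)$.

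The main obstacle is the implicit rearrangement in passing from the series indexed by $\alpha \in \FF_p^+$ to the series indexed by $\sigma \in \FF_n^+$: expanding each $\Phi_\alpha = \Phi_{j_1} \cdots \Phi_{j_k}$ as a series in the $W_\sigma$'s and then collecting contributions across all $\alpha$ is not obviously justified. My plan sidesteps any explicit rearrangement by defining $C_{(\sigma)}$ through the extraction formula directly on the already-convergent object $(F \circ \Phi)(rW)$, and then using the homomorphism property and norm-continuity of the Berezin transform to transport the identity from the universal model back to arbitrary $X \in {\bf D}_{f,\text{\rm rad}}^m(\cH)$.
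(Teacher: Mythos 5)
Your overall architecture is the same as the paper's: the pointwise norm convergence of $(F\circ\Phi)(X)$ and the bound $\|F\circ \Phi\|_\infty\leq\|F\|_\infty$ are immediate from the hypotheses; Lemma \ref{range} places $\Phi(rW_1,\ldots,rW_n)$ in ${\bf D}_{g,\text{\rm rad}}^l(F^2(H_n)\otimes\cY)$, so that $Q_r:=(F\circ\Phi)(rW_1,\ldots,rW_n)$ is a norm-convergent element of $\cA_n({\bf D}_f^m)\bar\otimes_{min}B(\cY\otimes\cE)$; the coefficients $C_{(\sigma)}$ are extracted from $Q_r$ and shown to be independent of $r$ exactly as in the paper; and the final transfer to an arbitrary $X\in{\bf D}_{f,\text{\rm rad}}^m(\cH)$ via the Berezin transform at $\frac{1}{\gamma}X$ is also the paper's closing step.

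The gap is in your third step, and it sits precisely at the point you yourself flag as the main obstacle. To invoke Theorem \ref{free-ho}(ii) you must know that $\sum_{k}\sum_{|\sigma|=k}s^{|\sigma|}W_\sigma\otimes C_{(\sigma)}$ converges in operator norm for \emph{every} $s\in[0,1)$, and to run the Berezin transfer you must know that this series at $s=r$ equals $Q_r$. Your justification --- that the series is ``term-by-term equal to the already norm-convergent series $(F\circ\Phi)(rW)$'' --- is not an argument: the two series are indexed by different free semigroups ($\FF_n^+$ versus $\FF_p^+$), so there are no matching terms to compare, and asserting their equality is exactly the rearrangement you set out to avoid; as written the step is circular, since the asserted equality presupposes that the $C_{(\sigma)}$-series converges, which is what Theorem \ref{free-ho} is supposed to deliver. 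The paper closes this by the Fourier-representation machinery of the Hardy algebra: since $Q_r\in\cA_n({\bf D}_f^m)\bar\otimes_{min}B(\cY\otimes\cE)\subset F_n^\infty({\bf D}_f^m)\bar\otimes B(\cY\otimes\cE)$, one has $Q_r=\text{\rm SOT-}\lim_{\gamma\to 1}\sum_{k}\sum_{|\sigma|=k}r^{|\sigma|}\gamma^{|\sigma|}W_\sigma\otimes C_{(\sigma)}$ with each $\gamma$-dilated series norm convergent (relation \eqref{rep2}); the $r$-independence of the $C_{(\sigma)}$ then gives norm convergence of $\sum s^{|\sigma|}W_\sigma\otimes C_{(\sigma)}$ for every $s<1$ (write $s=r\gamma$ with $r,\gamma<1$), so Theorem \ref{free-ho} applies, and letting $\gamma\to 1$ and using the continuity of the resulting free holomorphic function $G$ in the operator norm topology identifies $G(rW_1,\ldots,rW_n)=Q_r$ (relation \eqref{AB}). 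With this supplied, the rest of your outline goes through as in the paper.
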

\begin{proof}
Suppose that  $F$ has the  standard representation
\begin{equation*}
 F(Y_1,\ldots, Y_p)=\sum_{k=0}^\infty \sum_{\beta\in
\FF_p^+, |\beta|=k}
 Y_\alpha\otimes A_{(\beta)}, \qquad (Y_1,\ldots, Y_p)\in
{\bf
D}_{g,\text{\rm rad}}^l(\cK),
\end{equation*}
where the  convergence is  in the operator norm topology. Let
$\Phi=(\Phi_1,\ldots, \Phi_p)$, where $\Phi_1,\ldots, \Phi_p$ are
free holomorphic functions on ${\bf D}_{f,\text{\rm rad}}^m(\cH)$
with coefficients in $B(\cY)$, and  the property  that $\text{\rm
range}\, \Phi \subseteq {\bf D}_{g,\text{\rm rad}}^l(\cH\otimes
\cY)$.
 Hence, we deduce that
$$
(F\circ \Phi)(X_1,\ldots, X_n)=\sum_{k=0}^\infty \sum_{\beta\in
\FF_p^+, |\beta|=k}  \Phi_\beta(X_1,\ldots, X_n)\otimes
A_{(\beta)},
$$
where   $\Phi_\beta:=\Phi_{i_1}\cdots \Phi_{i_k}$ if
$\beta=g_{i_1}\cdots g_{i_k}\in \FF_p^+$, and
 the  convergence is  in the operator norm topology for any
$(X_1,\ldots, X_n)\in {\bf D}_{f,\text{\rm rad}}^m(\cH)$. According
to Lemma \ref{range}, we have $\Phi(rW_1,\ldots, rW_n)\in {\bf
D}_{f,\text{\rm rad}}^m(F^2(H_n)\otimes \cY)$ for  $r\in [0,1)$.
Taking into account that $F$ is free holomorphic  function on ${\bf
D}_{f,\text{\rm rad}}^m(\cK)$,  we deduce that, for each $r\in
[0,1)$,
\begin{equation}
\label{Mr}
 Q_r:=\sum_{k=0}^\infty \sum_{\beta\in
\FF_p^+, |\beta|=k}
 \Phi_\beta(rW_1,\ldots, rW_n)\otimes A_{(\beta)}
\end{equation}
is convergent in the operator norm topology. Since
$\Phi_i(rW_1,\ldots, rW_n)$ is in the tensor algebra $\cA_n({\bf
D}_f^m)\bar\otimes_{min} B(\cY)$ for each $i=1,\ldots, p$, the
operator $Q_r$ is in the operator algebra $$\cA_n({\bf
D}_f^m)\bar\otimes_{min} B(\cY\otimes\cE)\subset F_n^\infty({\bf
D}_f^m)\bar \otimes B(\cY\otimes\cE).$$ Therefore,
  $M_r$ has a   Fourier
representation  $\sum_{k=0}^\infty \sum_{\alpha\in \FF_n^+,
|\alpha|=k} r^{|\alpha|} W_\alpha\otimes B_{(\alpha)}(r)$, where $B_{(\alpha)}(r)\in
B(\cY\otimes \cE)$, $\alpha\in \FF_n^+$,  and
\begin{equation}
\label{rep2} Q_r=\text{\rm SOT-}\lim_{\gamma\to 1}\sum_{k=0}^\infty
\sum_{\alpha\in \FF_n^+, |\alpha|=k} r^{|\alpha|}\gamma^{|\alpha|}
W_\alpha\otimes B_{(\alpha)}(r),
\end{equation}
where the series $\sum_{k=0}^\infty \sum_{\alpha\in \FF_n^+,
|\alpha|=k}
 r^{|\alpha|}\gamma^{|\alpha|} W_\alpha\otimes B_{(\alpha)}(r)$
converges in the operator norm topology.

 Now, we show that the coefficients $B_{(\alpha)}(r)$, $\alpha\in \FF_n^+$,  don't depend on $r\in
[0,1)$. Taking into account  relations \eqref{Mr} and \eqref{rep2},
we  have
\begin{equation*}
\begin{split}
\left< B_{\alpha}(r)(y\otimes x),y'\otimes z\right>&= \left<( W_\alpha^*\otimes
I_{\cY\otimes \cE})\frac{b_\alpha^{(m)}}{r^{|\alpha|}} Q_r(1\otimes y\otimes x), 1\otimes y'\otimes  z\right>\\
&=\lim_{q\to\infty}\sum_{k=0}^q\sum_{\beta\in \FF_p^+,
|\beta|=k}\left< A_{(\beta)}
x,z\right>\left<\frac{b_\alpha^{(m)}}{r^{|\alpha|}}
(W_\alpha^*\otimes I_\cY)\Phi_\beta(rW_1,\ldots, rW_n)(1\otimes y),1\otimes y'\right>
\end{split}
\end{equation*}
for any $x, z\in \cE$,  $y,y'\in \cY$,  and $\alpha\in \FF_n^+$. On
the other hand, for each $\beta\in \FF_p^+$, $\Phi_\beta$  is a free
holomorphic function on ${\bf D}_{f,\text{\rm rad}}^m(\cH)$  with
coefficients  in $B(\cY)$ and has a representation
$\Phi_\beta(X_1,\ldots, X_n)=\sum_{k=0}^\infty \sum_{\alpha\in
\FF_n^+, |\alpha|=k} X_\alpha\otimes D_{(\alpha)}$ for $(X_1,\ldots,
X_n)\in {\bf D}_{f,\text{\rm rad}}^m(\cH)$,  where $D_{(\alpha)}\in
B(\cY)$.  This implies
$$
\left<\frac{1}{r^{|\alpha|}} (W_\alpha^*\otimes I_\cY)\Phi_\beta(rW_1,\ldots,
rW_n)(1\otimes y), 1\otimes y'\right>=\frac{1}{b_\alpha^{(m)}} \left< D_{(\alpha)}y,y'\right>
$$
for any $r\in [0,1)$, and $\alpha,\beta\in \FF_n^+$. Now, it is
clear that $B_{(\alpha)}:=B_{(\alpha)}(r)$ does not depend on $r\in
[0,1)$.  Now   relation \eqref{rep2} becomes
\begin{equation}\label{Mr2}
 Q_r=\text{\rm SOT-}\lim_{\gamma\to 1}\sum_{k=0}^\infty
\sum_{\alpha\in \FF_n^+, |\alpha|=k}  r^{|\alpha|}\gamma^{|\alpha|} W_\alpha \otimes
B_{(\alpha)},
\end{equation}
where the series $\sum_{k=0}^\infty \sum_{\alpha\in \FF_n^+,
|\alpha|=k} r^{|\alpha|}\gamma^{|\alpha|} W_\alpha\otimes
B_{(\alpha)}$ converges in the operator norm topology, for any $r,
\gamma\in[0,1)$. According to Theorem \ref{free-ho},
$$
G(X_1,\ldots, X_n):=\sum_{k=0}^\infty \sum_{\alpha\in \FF_n,
|\alpha|=k} X_\alpha\otimes B_{(\alpha)},\qquad (X_1,\ldots, X_n)\in
{\bf
D}_{f,\text{\rm rad}}^m(\cH),
$$
is a free holomorphic function on ${\bf D}_{f,\text{\rm
rad}}^m(\cH)$. Using the continuity of $G$ in the norm operator
topology and relations \eqref{Mr} and \eqref{Mr2}, we deduce that
\begin{equation}
\label{AB} Q_r:=\sum_{k=0}^\infty \sum_{\beta\in \FF_p^+, |\beta|=k}
 \Phi_\beta(rW_1,\ldots, rW_n)\otimes A_{(\beta)}=
\sum_{k=0}^\infty \sum_{\alpha\in \FF_n^+, |\alpha|=k} r^{|\alpha|}
W_\alpha\otimes B_{(\alpha)}
\end{equation}
for any $r\in [0,1)$.

Let  $X:=(X_1,\ldots, X_n)\in {\bf D}_{f,\text{\rm rad}}^m(\cH)$.
 Then there exists $\gamma \in (0,1)$
such that $ \frac{1}{\gamma}X \in {\bf D}_{f}^m(\cH)$.
 Applying  the noncommutative Berezin
transform  at $(\frac{1}{\gamma} X_1,\ldots, \frac{1}{\gamma}
X_n)\in {\bf D}_f^m(\cH)$ to relation \eqref{AB}, when $r:=\gamma$,
we deduce that
\begin{equation*}
 (F\circ \Phi)(X_1,\ldots, X_n)=\sum_{k=0}^\infty \sum_{\beta\in
\FF_p^+, |\beta|=k} \Phi_\beta(X_1,\ldots, X_n)\otimes
A_{(\beta)}=\sum_{k=0}^\infty \sum_{\alpha\in \FF_n^+, |\alpha|=k}
X_\alpha\otimes B_{(\alpha)}
\end{equation*}
for any $(X_1,\ldots, X_n)\in {\bf
D}_{f,\text{\rm rad}}^m(\cH)$. This completes
the proof.
\end{proof}

Now, we present  results concerning the composition of {\it bounded
free holomorphic functions}, with operator-valued coefficients, on
noncommutative domains ${\bf D}_{f\text{\rm rad}}^m$, $m\geq 1$.

\begin{theorem}
\label{more-prop} Let $f$ and $g$ be positive regular free
holomorphic functions with $n$ and $p$ indeterminates, respectively,  and let $m,l\geq 1$.
 Let  $F:{\bf
D}_{g,\text{\rm rad}}^l(\cK)\to B(\cK)\bar
\otimes_{min} B(\cE)$  and $\Phi: {\bf
D}_{f,\text{\rm rad}}^m(\cH)\to
[B(\cH)\bar \otimes_{min} B(\cY)]^p$  be   bounded free holomorphic
functions.
If $F$ and $\Phi$ have continuous extensions to the noncommutative domains ${\bf D}_g^l(\cK)$ and ${\bf D}_f^m(\cH)$, respectively, and $\text{\rm range}\, \Phi \subseteq
{\bf D}_g^l(\cH\otimes \cY)$, then $F\circ \Phi$ is a  bounded free holomorphic function  on
${\bf
D}_{f,\text{\rm rad}}^m(\cH)$ which has continuous extension to ${\bf D}_f^m(\cH)$.

Moreover,  we have
\begin{enumerate}
\item[(a)]
 $\|F\circ \Phi\|_\infty\leq
\|F\|_\infty$;
\item[(b)]
$ (F\circ \Phi)(X)=({\cB}_X\otimes I_{\cY\otimes \cE})\left\{
({\cB}_{\widetilde \Phi}\otimes I_\cE)[\widetilde F]\right\}$, \ $
X\in {\bf D}_{f}^m(\cH), $ where ${\cB}_X$, ${\cB}_{\widetilde \Phi}$ are the noncommutative Berezin transforms at
$X$ and  $\widetilde \Phi$, respectively;
\item[(c)] the model boundary function  of  the composition $F\circ\Phi$  satisfies the equation
$$
\widetilde {F\circ\Phi}= \lim_{r\to 1} F(r\widetilde
\Phi_1,\ldots, r\widetilde \Phi_p)=({\cB}_{\widetilde \Phi}\otimes
I_\cE)[\widetilde F],
$$
where  the convergence is in the operator norm topology and ${\cB}_{\widetilde \Phi}$ is the noncommutative Berezin transform
at $\widetilde \Phi$.
\end{enumerate}
\end{theorem}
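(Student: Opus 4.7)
The plan is to build on Theorem \ref{compo}, which already guarantees that $F\circ\Phi$ is a free holomorphic function on ${\bf D}_{f,\text{\rm rad}}^m(\cH)$, so that I only need to establish the continuous extension, the bound (a), and the Berezin transform identities (b)--(c). First I would observe that the continuous extension of $F\circ\Phi$ to ${\bf D}_f^m(\cH)$ is just the composition of continuous extensions: $\Phi$ extends continuously to ${\bf D}_f^m(\cH)$ taking values in ${\bf D}_g^l(\cH\bar\otimes\cY)$, and $F$ extends continuously on ${\bf D}_g^l(\cK)$, so the composed map $X\mapsto F(\Phi(X))$ is a continuous extension to the closed domain ${\bf D}_f^m(\cH)$.

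For item (a), since ${\bf D}_{g,\text{\rm rad}}^l(\cH\bar\otimes\cY)$ is dense in ${\bf D}_g^l(\cH\bar\otimes\cY)$ and $F$ is continuous on the closure, the bound $\|F(Y)\|\le \|F\|_\infty$ extends from the radial part to the whole closed domain. Taking $Y=\Phi(X)$ with $X\in{\bf D}_{f,\text{\rm rad}}^m(\cH)$ and passing to the supremum gives (a).

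For (b) and (c), the strategy is to apply the operator-valued analogue of Theorem \ref{A-infty} twice. Identify $F$ with $\widetilde F\in \cA_p({\bf D}_g^l)\bar\otimes_{min} B(\cE)$ and each coordinate $\Phi_j$ with $\widetilde \Phi_j\in \cA_n({\bf D}_f^m)\bar\otimes_{min} B(\cY)$, so that
\[
F(Y)=(\cB_Y\otimes I_\cE)[\widetilde F],\qquad \Phi(X)=(\cB_X\otimes I_\cY)[\widetilde\Phi],
\]
for $Y\in{\bf D}_g^l(\cK)$ and $X\in{\bf D}_f^m(\cH)$. Because $\Phi$ extends continuously, the SOT-limit defining $\widetilde\Phi_j$ is actually a norm limit equal to $\Phi_j(W_1^{(f)},\ldots,W_n^{(f)})$. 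Applying the range hypothesis with $\cH=F^2(H_n)$ gives $\widetilde\Phi\in{\bf D}_g^l(F^2(H_n)\bar\otimes\cY)$. Starlikeness of ${\bf D}_g^l$ then makes $r\widetilde\Phi\in{\bf D}_{g,\text{\rm rad}}^l(F^2(H_n)\bar\otimes\cY)$ for $r\in[0,1)$, and continuity of $F$ on its closure yields
\[
\lim_{r\to 1}F(r\widetilde\Phi_1,\ldots,r\widetilde\Phi_p)=F(\widetilde\Phi_1,\ldots,\widetilde\Phi_p)=(\cB_{\widetilde\Phi}\otimes I_\cE)[\widetilde F]
\]
in the operator norm topology. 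On the other hand, the model boundary function of $F\circ\Phi$ is by definition $\lim_{r\to 1}(F\circ\Phi)(rW^{(f)})$, and by the continuous extension this equals $(F\circ\Phi)(W^{(f)})=F(\widetilde\Phi)$; this identification proves (c). Finally, Theorem \ref{A-infty} applied to $F\circ\Phi\in A_{\cY\otimes\cE}({\bf D}_{f,\text{\rm rad}}^m)$ gives $(F\circ\Phi)(X)=(\cB_X\otimes I_{\cY\otimes\cE})[\widetilde{F\circ\Phi}]$, and substituting (c) yields (b).

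The main obstacle I expect is bookkeeping with the tensor factors: verifying the natural identification $B(F^2(H_n))\bar\otimes_{min}B(\cY\otimes\cE)\simeq B(F^2(H_n))\bar\otimes_{min}B(\cY)\bar\otimes_{min}B(\cE)$ and checking that the Berezin transforms $\cB_X$, $\cB_{\widetilde\Phi}$ commute with the appropriate identity factors when composed. A subsidiary concern is justifying norm (rather than merely SOT) convergence of $F(r\widetilde\Phi)\to F(\widetilde\Phi)$, which hinges precisely on the continuous extension hypothesis on $F$ and the fact that $\widetilde\Phi$ lands in ${\bf D}_g^l$ (not just in its radial interior).
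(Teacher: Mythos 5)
Your outline reproduces the main ingredients of the paper's argument (operator-valued Theorem \ref{A-infty}, the boundary functions $\widetilde F$, $\widetilde\Phi$, the norm limit $\lim_{r\to 1}F(r\widetilde\Phi)=(\cB_{\widetilde\Phi}\otimes I_\cE)[\widetilde F]$, and the estimate in (a)), but there is a genuine gap in how you obtain (b) and (c). You start from Theorem \ref{compo} to assert that $F\circ\Phi$ is free holomorphic on ${\bf D}_{f,\text{\rm rad}}^m(\cH)$; however, Theorem \ref{compo} requires $\text{\rm range}\,\Phi\subseteq{\bf D}_{g,\text{\rm rad}}^l(\cH\otimes\cY)$, whereas the present theorem only assumes $\text{\rm range}\,\Phi\subseteq{\bf D}_g^l(\cH\otimes\cY)$. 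Under this weaker hypothesis $\Phi(X)$ may lie on the boundary of ${\bf D}_g^l$, where the defining series of $F$ need not converge (continuity of the extension of $F$ does not give norm convergence of its series at boundary points), so the composition is only defined through the continuous extension, $(F\circ\Phi)(X)=\lim_{r\to 1}F(r\Phi_1(X),\ldots,r\Phi_p(X))$, and its free-holomorphy is exactly what has to be proved. Because of this, your derivation of (c) ("the model boundary function of $F\circ\Phi$ is by definition $\lim_{r\to 1}(F\circ\Phi)(rW^{(f)})$") and your final step ("Theorem \ref{A-infty} applied to $F\circ\Phi\in A_{\cY\otimes\cE}({\bf D}_{f,\text{\rm rad}}^m)$ gives (b)") presuppose that $F\circ\Phi$ is already a bounded free holomorphic function with continuous extension, i.e.\ the conclusion you are trying to establish; the argument is circular as written.

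The paper closes this loop in the opposite order, and that is the step missing from your proposal: one first checks that $Q:=\lim_{r\to 1}F(r\widetilde\Phi_1,\ldots,r\widetilde\Phi_p)=(\cB_{\widetilde\Phi}\otimes I_\cE)[\widetilde F]$ lies in the norm-closed algebra $\cA_n({\bf D}_f^m)\bar\otimes_{min}B(\cY\otimes\cE)$ (each $F(r\widetilde\Phi)$ is a norm-convergent series of terms $\widetilde\Phi_\beta\otimes A_{(\beta)}$ in that algebra), and then proves (b) by a direct computation: for $X\in{\bf D}_f^m(\cH)$, expand $F(r\Phi(X))=\sum_\beta r^{|\beta|}\Phi_\beta(X)\otimes A_{(\beta)}$, use that $\cB_X$ is a norm-continuous completely contractive homomorphism on the domain algebra so that $\Phi_\beta(X)\otimes A_{(\beta)}=(\cB_X\otimes I_{\cY\otimes\cE})[\widetilde\Phi_\beta\otimes A_{(\beta)}]$, and pull the limit $r\to 1$ through $\cB_X\otimes I_{\cY\otimes\cE}$ to get $(F\circ\Phi)(X)=(\cB_X\otimes I_{\cY\otimes\cE})[Q]$. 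Only after (b) is in hand does the operator-valued Theorem \ref{A-infty} yield that $F\circ\Phi$ is free holomorphic on ${\bf D}_{f,\text{\rm rad}}^m(\cH)$, continuous on ${\bf D}_f^m(\cH)$, with model boundary function $Q$, which is (c); (a) then follows as you indicate. Your remaining points (the identification $\widetilde\Phi\in{\bf D}_g^l(F^2(H_n)\otimes\cY)$ via closedness of the domain, the norm convergence $F(r\widetilde\Phi)\to(\cB_{\widetilde\Phi}\otimes I_\cE)[\widetilde F]$, and the bound in (a)) are sound, so the fix is to replace the appeal to Theorem \ref{compo} and the application of Theorem \ref{A-infty} to $F\circ\Phi$ by this identification computation.
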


\begin{proof}
Let $F$ have the representation
$$
F(Y_1,\ldots, Y_p):= \sum_{k=0}^\infty \sum_{\beta\in
\FF_p^+,|\beta|=k} Y_\beta\otimes A_{(\beta)}, \qquad (Y_1,\ldots,
Y_p)\in {\bf
D}_{g,\text{\rm rad}}^l(\cK),
$$
where the  series converges in the operator norm  topology. Since
$F$ and $\Phi$ have continuous extensions to the noncommutative domains ${\bf D}_g^l(\cK)$ and ${\bf D}_f^m(\cH)$, respectively, the operator-valued version of Theorem \ref{A-infty} implies
 that  $\widetilde F\in \cA_p({\bf D}_g^l)\bar
\otimes_{min} B(\cE)$ and $\widetilde \Phi:=(\widetilde
\Phi_1,\ldots, \widetilde \Phi_p)$ is
  such that   $\widetilde \Phi_j \in \cA_n({\bf D}_f^m)\bar \otimes_{min} B(\cY)$,
$j=1,\ldots, p$.
  Due to the properties  of the noncommutative Berezin  transform  $\cB_{\widetilde \Phi}$
 and  the fact that $(\widetilde \Phi_1,\ldots, \widetilde
\Phi_p)$ is in ${\bf D}_g^l(F^2(H_n)\otimes \cY)$,
$$\lim_{r\to 1} F(r\widetilde \Phi_1,\ldots,
r\widetilde \Phi_p))=\lim_{r\to 1}\sum_{k=0}^\infty \sum_{\beta\in
\FF_p^+,|\beta|=k} r^{|\beta|}\widetilde \Phi_\beta\otimes
A_{(\beta)}=({\cB}_{\widetilde \Phi}\otimes I_\cE)[\widetilde F]$$
 where the convergence is  in the  operator
norm topology. Hence and  due to the fact that ${\cA}_n({\bf
D}_f^m)\bar \otimes_{min} B(\cY\otimes \cE)$ is closed in the
operator norm, we deduce that $\lim_{r\to 1} F(r\widetilde
\Phi_1,\ldots, r\widetilde \Phi_p)$ is in $\cA_n({\bf D}_f^m)\bar
\otimes_{min} B(\cY\otimes \cE)$. If $X\in {\bf D}_{f}^m(\cH)$, then
using the continuity of $F, \Phi$, and  the continuity of the
noncommutative Berezin  transform ${\cB}_X$ in the operator norm
topology, we obtain
\begin{equation*}
\begin{split}
(F\circ \Phi)(X)&=\lim_{r\to 1} F(r\Phi_1(X),\ldots, r\Phi_p(X))\\
&=\lim_{r\to 1} \sum_{k=0}^\infty \sum_{\beta\in
\FF_p^+,|\beta|=k} r^{|\beta|}\Phi_\beta(X)\otimes A_{(\beta)}\\
&=\lim_{r\to 1}\sum_{k=0}^\infty \sum_{\beta\in
\FF_p^+,|\beta|=k} r^{|\beta|} ({\cB}_X\otimes I_\cY)[\widetilde\Phi_\beta]\otimes A_{(\beta)}\\
&= \lim_{r\to 1}({\cB}_X\otimes I_{\cY\otimes
\cE})\left(\sum_{k=0}^\infty \sum_{\beta\in
\FF_p^+,|\beta|=k} r^{|\beta|} \widetilde\Phi_\beta\otimes A_{(\beta)}\right)\\
&= ({\cB}_X\otimes I_{\cY\otimes \cE})\lim_{r\to
1}\left(\sum_{k=0}^\infty \sum_{\beta\in
\FF_p^+,|\beta|=k} r^{|\beta|} \widetilde\Phi_\beta\otimes A_{(\beta)}\right)\\
&=({\cB}_X \otimes I_{\cY\otimes \cE})\left\{( {\cB}_{\widetilde
\Phi}\otimes I_\cE)[\widetilde F]\right\},
\end{split}
\end{equation*}
which proves part (b).
Since $( {\cB}_{\widetilde \Phi}\otimes I_\cE)[\widetilde F]\in
\cA_n({\bf D}_f^m)\bar\otimes_{min} B(\cY\otimes \cE)$, the operator-valued version of Theorem \ref{A-infty} shows that
$F\circ \Phi$ is a free holomorphic  function on ${\bf
D}_{f,\text{\rm rad}}^m(\cH)$ and continuous on ${\bf D}_f^m(\cH)$ and
its model boundary function   satisfies the equation
$$
\widetilde {F\circ\Phi}= \lim_{r\to 1} F(r\widetilde
\Phi_1,\ldots, r\widetilde \Phi_p)=({\cB}_{\widetilde \Phi}\otimes
I_\cE)[\widetilde F],
$$
which proves part (c).  Part (a) is now obvious.
The proof is complete.
\end{proof}

We recall that an $n$-tuple  $T:=(T_1,\ldots, T_n)\in {\bf D}_{f
}^m(\cH)$  is  {\it pure} if ~$\text{\rm SOT-}\lim_{k\to\infty}
\Phi_{f,T}^k(I)=0$, where
 $\Phi_{f,T}:B(\cH)\to
B(\cH)$ is the  positive linear map defined  by $
\Phi_{f,T}(X)=\sum_{k=1}^\infty\sum_{|\alpha|=k} a_\alpha T_\alpha
XT_\alpha^*, $ where the convergence is in the weak operator
topology. We define
$${\bf
D}_{f,\text{\rm pure}}^m(\cH):=\{(X_1,\ldots, X_n)\in {\bf  D}_f^{m}(\cH):\
 (X_1,\ldots, X_n) \text{ is pure}\}.
$$
Note that ${\bf D}_{f,\text{\rm rad}}^m(\cH)\subset{\bf
D}_{f,\text{\rm pure}}^m(\cH)\subset {\bf D}_{f}^m(\cH)$.

\begin{lemma}\label{pure}  Let $f$ and $g$ be positive regular free
holomorphic functions with $n$ and $p$ indeterminates, respectively,
and let $m,l\geq 1$. Let $\psi_i\in H^\infty( {\bf D}_{f}^m)$,
$i=1,\ldots,p$, be such that
 $\widetilde \psi:= (\widetilde
\psi_1,\ldots, \widetilde \psi_p)$ is in  ${\bf D}_{g,\text{\rm
pure}}^l(F^2(H_n))$, where $\widetilde \psi_i\in F_n^\infty({\bf
D}_f^m)$  is the model boundary function. Then
$$
\psi(X):=(\psi_1(X),\ldots, \psi_p(X))\in {\bf D}_{g,\text{\rm
pure}}^l(\cH)
$$
for any $X:=(X_1,\ldots, X_n)\in {\bf D}_{f,\text{\rm
pure}}^m(\cH)$.
\end{lemma}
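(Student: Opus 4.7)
The plan is to transfer the hypotheses on $\widetilde\psi$ to $\psi(X)$ through the noncommutative Berezin kernel of $X$, exploiting both the intertwining of the kernel and the fact that the extended Berezin transform on $F_n^\infty({\bf D}_f^m)$ is a completely contractive homomorphism.

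Write $K := K_{f,X}^{(m)}:\cH\to F^2(H_n)\otimes \overline{\Delta_{f,m,X}(\cH)}$. Since $X$ is pure, $K$ is an isometry and satisfies the standard intertwining $K X_i^* = (W_i^*\otimes I) K$; by Theorem \ref{f-infty} each $\psi_j$ corresponds to $\widetilde\psi_j\in F_n^\infty({\bf D}_f^m)$ and one has $\psi_j(X)=\cB_X[\widetilde\psi_j]=K^*(\widetilde\psi_j\otimes I) K$. The first step is to extend the intertwining from the generators to the whole Hardy algebra, namely
\begin{equation*}
\psi_j(X) K^* = K^*(\widetilde\psi_j\otimes I), \qquad K\, \psi_j(X)^* = (\widetilde\psi_j^*\otimes I)\, K,
\end{equation*}
which follows from the fact that the range of $K$ is co-invariant for $W_i\otimes I$ and hence invariant for the adjoints of every element of $F_n^\infty({\bf D}_f^m)\otimes I$ (obtained by taking SOT-limits of polynomials in $W_i^*\otimes I$ via the sequential SOT-density in $F_n^\infty({\bf D}_f^m)$). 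Writing $Y_j:=\psi_j(X)$, one then obtains $Y_\alpha K^*=K^*(\widetilde\psi_\alpha\otimes I)$ and $K Y_\alpha^*=(\widetilde\psi_\alpha^*\otimes I) K$ for every $\alpha\in \FF_p^+$.

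The heart of the argument is then to introduce the unital completely positive map $\Gamma:B(F^2(H_n))\to B(\cH)$ defined by $\Gamma(A):=K^*(A\otimes I)K$, and to verify the key transfer identity
\begin{equation*}
\Phi_{g,Y}\bigl(\Gamma(A)\bigr)=\Gamma\bigl(\Phi_{g,\widetilde\psi}(A)\bigr), \qquad A\in B(F^2(H_n)).
\end{equation*}
This is a direct computation: for each $\beta\in \FF_p^+$,
\begin{equation*}
Y_\beta\,\Gamma(A)\,Y_\beta^* = (Y_\beta K^*)(A\otimes I)(K Y_\beta^*) = K^*(\widetilde\psi_\beta A \widetilde\psi_\beta^*\otimes I) K = \Gamma(\widetilde\psi_\beta A \widetilde\psi_\beta^*),
\end{equation*}
and summing over $\beta$ with the weights $b_\beta^{(g)}$ of $g$ yields the identity. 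Iterating and using $\Gamma(I)=I$ (since $K$ is an isometry) gives $\Phi_{g,Y}^k(I)=\Gamma\bigl(\Phi_{g,\widetilde\psi}^k(I)\bigr)$ for every $k\geq 0$.

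The conclusion then falls out in two pieces. For the defining positivity of ${\bf D}_g^l$, complete positivity of $\Gamma$ gives
\begin{equation*}
(id-\Phi_{g,Y})^k(I)=\Gamma\bigl((id-\Phi_{g,\widetilde\psi})^k(I)\bigr)\geq 0,\qquad 1\leq k\leq l,
\end{equation*}
because $\widetilde\psi\in {\bf D}_g^l(F^2(H_n))$. For pureness, $\text{SOT-}\lim_k \Phi_{g,\widetilde\psi}^k(I)=0$ together with boundedness of $K$ and $K^*$ yields $\text{SOT-}\lim_k\Phi_{g,Y}^k(I)=\text{SOT-}\lim_k K^*\bigl(\Phi_{g,\widetilde\psi}^k(I)\otimes I\bigr)K=0$. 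The main obstacle I anticipate is cleanly establishing the intertwining $\psi_j(X)K^* = K^*(\widetilde\psi_j\otimes I)$ for elements of the Hardy algebra rather than mere polynomials; this requires appealing to the sequential SOT-closure description of $F_n^\infty({\bf D}_f^m)$ and the SOT-continuity of the Berezin transform on bounded sets as recorded in Section 2, after which the algebraic computations leading to the transfer identity become routine.
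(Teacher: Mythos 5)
Your proposal is correct and follows essentially the same route as the paper: the paper's proof also passes through the identity $\psi_\alpha(X)\psi_\beta(X)^*={K_{f,X}^{(m)}}^*(\widetilde\psi_\alpha\widetilde\psi_\beta^*\otimes I)K_{f,X}^{(m)}$ (your intertwining argument is exactly what justifies it) and the resulting transfer $\Phi_{g,\psi(X)}^k(I)={K_{f,X}^{(m)}}^*(\Phi_{g,\widetilde\psi}^k(I)\otimes I)K_{f,X}^{(m)}$, then concludes pureness from $\text{\rm SOT-}\lim_k\Phi_{g,\widetilde\psi}^k(I)=0$. You merely spell out the co-invariance/intertwining step and the positivity conditions $(id-\Phi_{g,\psi(X)})^k(I)\geq 0$ more explicitly than the paper does (and your ``weights $b_\beta^{(g)}$'' should read the coefficients of $g$), but the argument is the same.
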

\begin{proof} Taking into account that
 $\widetilde \psi:=(\widetilde \psi_1,\ldots, \widetilde \psi_p)\in
{\bf
D}_{g,\text{\rm pure}}^l(F^2(H_n))$ and
 $X:=(X_1,\ldots, X_n)\in {\bf
D}_{f,\text{\rm pure}}^m(\cH)$,  and  using the properties of the
extended noncommutative Berezin transform, we deduce that
$$\psi_\alpha(X)\psi_\alpha(X)^*={K_{f, X}^{(m)}}^*
(\widetilde \psi_\alpha \widetilde\psi_\beta^*\otimes I_\cH)K_{f,
X}^{(m)},\qquad \alpha\in \FF_n^+.
$$
Hence, we obtain
$$
\Phi_{g, \psi(X)}^k(I)={K_{f, X}^{(m)}}^* ( \Phi_{g,\widetilde \psi}^k(I)\otimes
 I_\cH)K_{f, X}^{(m)}, \qquad  k\in \NN.
$$
 Since $\Phi_{g,\widetilde \psi}^k(I)\leq I$ for any $k\in \NN$ and
  $\text{\rm SOT-}\lim_{k\to\infty}
\Phi_{g,\widetilde \psi}^k(I)=0$, we deduce that
$$\text{\rm
SOT-}\lim_{k\to\infty} \Phi_{g, \psi(X)}^k(I)=0,
$$
which shows that
$\psi(X)$ is pure.
\end{proof}

\begin{theorem}
\label{more-prop2} Let $f$ and $g$ be positive regular free
holomorphic functions with $n$ and $p$ indeterminates, respectively,
and let $m,l\geq 1$.
 Let  $F:{\bf
D}_{g,\text{\rm rad}}^l(\cK)\to B(\cK)\bar \otimes_{min} B(\cE)$ and
$\Phi: {\bf D}_{f,\text{\rm rad}}^m(\cH)\to B(\cH)^p$ be bounded
free holomorphic functions. If the model boundary function
$\widetilde \Phi$ is a pure $n$-tuple in ${\bf D}_g^l(F^2(H_n))$,
then the map
$$
(F\circ \Phi)(X):=\text{\rm SOT-}\lim_{r\to 1} F(r\Phi_1(X),\ldots,
r\Phi_n(X)),\qquad X\in {\bf D}_{f,\text{\rm pure}}^m(\cH),
$$
is a bounded free holomorphic function on ${\bf
D}_{f,\text{\rm rad}}^m(\cH)$.
Moreover,  we have
\begin{enumerate}
\item[(a)]
 $\|F\circ \Phi\|_\infty\leq
\|F\|_\infty$;
\item[(b)]
$ (F\circ \Phi)(X)=({\cB}_X\otimes I_{\cE})\left\{
({\cB}_{\widetilde \Phi}\otimes I_\cE)[\widetilde F]\right\}$, \ $
X\in {\bf D}_{f,\text{\rm pure}}^m(\cH), $ where ${\cB}_X$,
${\cB}_{\widetilde \Phi}$ are the noncommutative Berezin transforms
at $X$ and  $\widetilde \Phi$, respectively;
\item[(c)] the model boundary function  of  the composition $F\circ\Phi$  satisfies the equation
$$
\widetilde {F\circ\Phi}=\text{\rm SOT-}\lim_{r\to 1} F(r\widetilde
\Phi_1,\ldots, r\widetilde \Phi_p)=({\cB}_{\widetilde \Phi}\otimes
I_\cE)[\widetilde F],
$$
where ${\cB}_{\widetilde \Phi}$ is the noncommutative Berezin
transform at $\widetilde \Phi$.
\end{enumerate}
\end{theorem}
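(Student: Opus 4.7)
The plan is to construct $F\circ\Phi$ as the bounded free holomorphic function associated, via the operator-valued analogue of Theorem~\ref{f-infty}, to the candidate model boundary function
\[
\psi := (\cB_{\widetilde\Phi}\otimes I_\cE)[\widetilde F],
\]
and then to identify its values on ${\bf D}_{f,\text{\rm pure}}^m(\cH)$ with the SOT--limit in the statement. Three ingredients drive the argument: since $\widetilde\Phi$ is pure (hence c.n.c.), the Berezin transform $\cB_{\widetilde\Phi}$ is defined on the Hardy algebra $F_p^\infty({\bf D}_g^l)$ and is a unital completely contractive homomorphism that is SOT--continuous on bounded sets; the sequential SOT--closure description of $F_n^\infty({\bf D}_f^m)$; and Lemma~\ref{pure}, which guarantees that $\Phi(X)$ is pure whenever $X$ is.

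First I would verify that $\psi\in F_n^\infty({\bf D}_f^m)\bar\otimes B(\cE)$ with $\|\psi\|\leq\|F\|_\infty$. The norm bound is immediate from complete contractivity of $\cB_{\widetilde\Phi}\otimes I_\cE$. For the containment, approximate $\widetilde F$ sequentially in SOT by polynomials $q_k(W^{(g)})$ with bounded norms; since $\cB_{\widetilde\Phi}(W_i^{(g)})=\widetilde\Phi_i\in F_n^\infty({\bf D}_f^m)$, each $q_k(\widetilde\Phi)$ lies in $F_n^\infty({\bf D}_f^m)\bar\otimes B(\cE)$, and SOT--continuity on bounded sets combined with sequential SOT--closure of this algebra yields the claim. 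The operator-valued version of Theorem~\ref{f-infty} then produces $H\in H_\cE^\infty({\bf D}_{f,\text{\rm rad}}^m)$ with $\widetilde H=\psi$, $\|H\|_\infty=\|\psi\|\leq\|F\|_\infty$, and $H(X)=(\cB_X\otimes I_\cE)[\psi]$ for pure $X$; this already yields part~(a) and the right-hand side of part~(b).

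The remaining task is the composition identity
\[
(\cB_X\otimes I_\cE)\circ(\cB_{\widetilde\Phi}\otimes I_\cE)=\cB_{\Phi(X)}\otimes I_\cE
\]
on $F_p^\infty({\bf D}_g^l)\bar\otimes B(\cE)$. Both sides are unital completely contractive homomorphisms, SOT--continuous on bounded sets, and they agree on each generator $W_i^{(g)}\otimes I_\cE$ since both send it to $\Phi_i(X)\otimes I_\cE$ (using $\cB_X[\widetilde\Phi_i]=\Phi_i(X)$ from Theorem~\ref{f-infty}). Sequential SOT--density of the polynomials extends the identity to the full Hardy algebra. By Lemma~\ref{pure}, $\Phi(X)$ is pure and hence c.n.c., so the defining SOT--formula for the Berezin transform on the Hardy algebra gives
\[
(\cB_{\Phi(X)}\otimes I_\cE)[\widetilde F]=\text{\rm SOT-}\lim_{r\to 1}F(r\Phi_1(X),\ldots,r\Phi_p(X)).
\]
Chaining these equalities produces $H(X)=(F\circ\Phi)(X)$, which simultaneously establishes well-definedness of the composition and part~(b); part~(c) follows by applying the same SOT--formula with $T=\widetilde\Phi$ in place of $\Phi(X)$.

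The main obstacle I expect is the composition identity in the tensored setting: on polynomials it is direct, but one must carefully pass through the sequential SOT--closure for two successive completely bounded maps and verify that the identity survives at the level of operator-valued coefficients. A subordinate subtlety is bookkeeping the distinction between purity (needed to invoke Lemma~\ref{pure}) and the c.n.c.\ property (needed for the Berezin transform on the Hardy algebra to have the SOT--limit description), which is harmless here since pure implies c.n.c.
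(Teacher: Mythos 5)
Your proposal is correct and uses essentially the same ingredients as the paper's proof: Lemma \ref{pure}, the operator-valued version of Theorem \ref{f-infty}, the SOT-limit description \eqref{Be-transf} of the Berezin transform at pure tuples, and its SOT-continuity on bounded sets. The only difference is packaging — the paper expands $F$ into its series and pushes the transforms through the partial sums directly, while you prove the composition identity $(\cB_X\otimes I_\cE)\circ(\cB_{\widetilde \Phi}\otimes I_\cE)=\cB_{\Phi(X)}\otimes I_\cE$ once on the whole tensored Hardy algebra by checking generators and invoking sequential SOT-density; this is the same computation in abstract form.
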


\begin{proof}
Let $F$ have the representation
$$
F(Y_1,\ldots, Y_p):= \sum_{k=0}^\infty \sum_{\beta\in
\FF_p^+,|\beta|=k} Y_\beta\otimes A_{(\beta)}, \qquad (Y_1,\ldots,
Y_p)\in {\bf
D}_{g,\text{\rm rad}}^l(\cK),
$$
where the  series converges in the operator norm  topology. Since
$F$ and $\Phi$ are bounded free holomorphic functions on  the
 noncommutative domains ${\bf D}_{g, \text{\rm rad}}^l(\cK)$ and
  ${\bf D}_{f,\text{\rm rad}}^m(\cH)$, respectively, the operator-valued
   version of Theorem \ref{f-infty} implies
 that  $\widetilde F\in F_n^\infty({\bf D}_g^l)\bar
\otimes B(\cE)$ and $\widetilde \Phi:=(\widetilde \Phi_1,\ldots,
\widetilde \Phi_p)$ is
  such that   $\widetilde \Phi_j \in F_n^\infty({\bf D}_f^m)$,
$j=1,\ldots, p$.
  Due to  the properties  of the noncommutative Berezin  transform  $\cB_{\widetilde \Phi}$
 and  the fact that $(\widetilde \Phi_1,\ldots, \widetilde
\Phi_p)$ is a pure $n$-tuple in ${\bf D}_g^l(F^2(H_n))$, we have
\begin{equation}
\label{sot-tilde}\text{\rm SOT-}\lim_{r\to 1} F(r\widetilde
\Phi_1,\ldots, r\widetilde \Phi_p))=\text{\rm SOT-}\lim_{r\to
1}\sum_{k=0}^\infty \sum_{\beta\in \FF_p^+,|\beta|=k}
r^{|\beta|}\widetilde \Phi_\beta\otimes
A_{(\beta)}=({\cB}_{\widetilde \Phi}\otimes I_\cE)[\widetilde F].
\end{equation}
    Hence and  due to the fact that $F_n^\infty({\bf D}_f^m)\bar
    \otimes
B( \cE)$ is closed in the  strong operator  topology, we deduce that
$\text{\rm SOT-}\lim_{r\to 1} F(r\widetilde \Phi_1,\ldots,
r\widetilde \Phi_p)$ is in $F_n^\infty({\bf D}_f^m)\bar \otimes
B(\cE)$. If $X\in {\bf D}_{f,\text{\rm pure}}^m(\cH)$, then Lemma
\ref{pure} implies that $(\Phi_1(X),\ldots, \Phi_p(X))$ is a pure
$n$-tuple of operators in ${\bf D}_g^l(\cH)$. Consequently,  as in
the proof of Theorem \ref{more-prop}, using   the continuity of the
noncommutative Berezin transform ${\cB}_X$ in the  strong operator
topology (see \eqref{Be-transf}), and relation \eqref{sot-tilde},
we obtain
\begin{equation*}
\begin{split}
(F\circ \Phi)(X)&=\text{\rm SOT-}\lim_{r\to 1} F(r\Phi_1(X),\ldots, r\Phi_p(X))\\
&=\text{\rm SOT-}\lim_{r\to 1} \sum_{k=0}^\infty \sum_{\beta\in
\FF_p^+,|\beta|=k} r^{|\beta|}\Phi_\beta(X)\otimes A_{(\beta)}\\
&=({\cB}_X \otimes I_{ \cE})\left\{( {\cB}_{\widetilde \Phi}\otimes
I_\cE)[\widetilde F]\right\},
\end{split}
\end{equation*}
which proves part (b).
Since $( {\cB}_{\widetilde \Phi}\otimes I_\cE)[\widetilde F]\in
F^\infty_n({\bf D}_f^m)\bar\otimes_{min} B(\cE)$, the
operator-valued version of Theorem \ref{A-infty} shows that $F\circ
\Phi$ is a free holomorphic  function on ${\bf D}_{f,\text{\rm
rad}}^m(\cH)$  and its model boundary function   satisfies the
equation
$$
\widetilde {F\circ\Phi}=\text{\rm SOT-}\lim_{r\to 1} F(r\widetilde
\Phi_1,\ldots, r\widetilde \Phi_p)=({\bf B}_{\widetilde \Phi}\otimes
I_\cE)[\widetilde F],
$$
which proves part (c).  Part (a) is now obvious.
The proof is complete.
\end{proof}

\bigskip

\section{Free biholomorphic functions and noncommutative Cartan type results}

In this section we obtain  noncommutative   Cartan type results for
formal power series in several noncommuting   indeterminates, which
leave invariant the nilpotent parts of the corresponding domains.
These results are used to
    characterize  the set   of all
   free biholomorphic functions $F:{\bf D}_f^m(\cH)\to{\bf D}_g^l(\cH)$
with $F(0)=0$.
  As a consequence, we  determine  the free holomorphic
automorphisms of the noncommutative domain  ${\bf D}_{f,\text{\rm
rad}}^m$ which  fix the origin. Several other  consequences
concerning the biholomorphic classification of  noncommutative
domains are obtained.

An $n$-tuple of operators $(N_1,\ldots, N_n)\in B(\cH)^n$ is called
nilpotent if there is  $p\in \NN:=\{1, 2,\ldots\}$  such that
$N_\alpha=0$ for any $\alpha\in \FF_n^+$ with $|\alpha|=p$. Define
the nilpotent part of the noncommutative domain  ${\bf D}_f^m(\cH)$
by setting
 $$
  {\bf D}_{f,\text{\rm nil}}^m(\cH):=\{(N_1,\ldots, N_n)\in {\bf D}_f^m(\cH):
  \ (N_1,\ldots, N_n) \text{ is nilpotent} \}.
  $$

For simplicity, throughout this paper, $[X_1,\ldots, X_n]$ denotes
either the $n$-tuple $(X_1,\ldots, X_n)\in B(\cH)^n$ or the operator
row matrix $[ X_1\, \cdots \, X_n]$ acting from $\cH^{(n)}$, the
direct sum  of $n$ copies of  a Hilbert space $\cH$, to $\cH$.

 \begin{theorem} \label{cartan1} Let $f$   be  a positive regular free
holomorphic function with $n$  indeterminates  and let $m \geq 1$.
  Let $H_1,\ldots, H_n$ be  formal  power series in $n$ noncommuting indeterminates
   $Z_1,\ldots, Z_n$
   of the form
   $$
 H_i(Z_1,\ldots, Z_n):=\sum_{k=2}\sum_{|\alpha|=k} a_\alpha^{(i)}
 Z_\alpha,\qquad a_\alpha^{(i)}\in \CC,\  i=1,\ldots,n.
 $$
 If
 $$
 F(Z_1,\ldots, Z_n):=\left(Z_1+H_1(Z_1,\ldots, Z_n),\ldots, Z_n+ H_n(Z_1,\ldots, Z_n)\right)
 $$
 has the property  that
  $$F({\bf D}_{f,\text{\rm nil}}^m(\cH))\subseteq {\bf D}_{f,\text{\rm nil}}^m(\cH)
  $$
  for any Hilbert space $\cH$,
   then
 $$F(Z_1,\ldots, Z_n)=(Z_1,\ldots, Z_n).
 $$
\end{theorem}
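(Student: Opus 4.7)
The plan is to adapt the classical Cartan uniqueness argument to the noncommutative setting, iterating $F$ on nilpotent $n$-tuples and exploiting the uniform bound \eqref{bound} to kill the lowest non-trivial homogeneous component of $(H_1,\dots,H_n)$. Argue by contradiction: suppose some $H_i\not\equiv 0$ and let $k\geq 2$ be the smallest degree at which any $H_i$ has a non-zero homogeneous component. Set
$$
P_i^{(k)}(Z):=\sum_{|\alpha|=k} a_\alpha^{(i)} Z_\alpha,\qquad P^{(k)}:=(P_1^{(k)},\dots,P_n^{(k)})\not\equiv 0,
$$
and, for each $s\geq 1$, let $\cN_s$ denote the class of $n$-tuples $N$ with $N_\alpha=0$ for all $|\alpha|\geq s+1$.

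The first step is a purely algebraic identity. I would prove by induction on $p$ that the $p$-fold composition $F^{\circ p}:=F\circ\cdots\circ F$, evaluated as a formal substitution on any $N\in\cN_k$, equals
$$
F^{\circ p}(N)=N+p\,P^{(k)}(N).
$$
The evaluation is unambiguous since every formal power series in an $N\in\cN_k$ collapses to a finite sum. For the induction step, first observe that $F$ maps $\cN_k$ into itself: each entry $F(N)_\alpha$ with $|\alpha|\geq k+1$ expands into monomials of total $N$-degree $\geq k+1$, all of which vanish in $\cN_k$. Then expand $P^{(k)}\bigl(N+pP^{(k)}(N)\bigr)$: every cross term containing at least one factor of $P^{(k)}(N)$ has total $N$-degree $\geq 2k\geq k+1$ and vanishes on $\cN_k$, while the degree-$>k$ tails of the $H_i$ vanish for the same reason; only $P^{(k)}(N)$ survives, which closes the induction.

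Next I would produce a concrete $N\in\cN_k$ on which the coefficients of $P^{(k)}$ can be read off. Let $\cH_k:=\mathrm{span}\{e_\alpha:|\alpha|\leq k\}\subset F^2(H_n)$ and define $N_i:=P_{\cH_k}W_i|_{\cH_k}$, where $(W_1,\dots,W_n)$ is the universal model of ${\bf D}_f^m$. A direct computation using \eqref{w-shift} shows that $N_\alpha e_{g_0}$ is a non-zero scalar multiple of $e_\alpha$ for $|\alpha|\leq k$ and $N_\alpha=0$ for $|\alpha|\geq k+1$, so $N\in\cN_k$. Because $N$ is nilpotent, each $\Phi_{f,rN}^j(I)$ is a finite sum with norm tending to $0$ as $r\to 0^+$; hence $(id-\Phi_{f,rN})^j(I)\geq 0$ for $1\leq j\leq m$ and all sufficiently small $r>0$, which places $rN$ in ${\bf D}_{f,\mathrm{nil}}^m(\cH_k)$.

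By hypothesis, $F^{\circ p}(rN)\in{\bf D}_f^m(\cH_k)$ for every $p\geq 1$, and the algebraic identity gives $F^{\circ p}(rN)=rN+pr^k P^{(k)}(N)$. The bound \eqref{bound} then yields
$$
\bigl\|rN_i+pr^k P_i^{(k)}(N)\bigr\|^{2}\leq \frac{1}{\min_{|\beta|=1} a_\beta}
$$
uniformly in $p$; dividing by $p$ and letting $p\to\infty$ forces $P_i^{(k)}(N)=0$ for every $i$. Applying $P_i^{(k)}(N)$ to $e_{g_0}$ and using the linear independence of $\{e_\alpha:|\alpha|=k\}$ together with $N_\alpha e_{g_0}\neq 0$, I conclude $a_\alpha^{(i)}=0$ for every $|\alpha|=k$ and every $i$, contradicting the minimality of $k$. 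Hence every $H_i$ is identically zero, and $F(Z)=(Z_1,\dots,Z_n)$. The main obstacle is the combinatorial bookkeeping behind the iteration identity on $\cN_k$: one must carefully check that every cross term in $P^{(k)}(N+pP^{(k)}(N))$ and in the higher-order tails of $H_i$ accumulates enough extra $N$-factors to vanish, which rests on $H_i$ starting in degree $\geq 2$ and on the homogeneity of $P^{(k)}$. Once that step is in place, the problem reduces to the single one-parameter estimate above.
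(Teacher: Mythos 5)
Your proof is correct and follows essentially the same route as the paper's: argue by contradiction from the minimal nonzero homogeneous degree, establish the iteration identity $F^{\circ p}=\text{\rm id}+p\,P^{(k)}+(\text{terms of degree}\geq k+1)$ on the compression of the universal model $(W_1,\ldots,W_n)$ to $\text{\rm span}\{e_\alpha:\ |\alpha|\leq k\}$, and play the uniform bound \eqref{bound} on ${\bf D}_f^m(\cH)$ against the linear growth in $p$. The only cosmetic differences are your rescaling $rN$ (the paper uses the compressed tuple itself, which already lies in ${\bf D}_{f,\text{\rm nil}}^m$) and your reading off the degree-$k$ coefficients by applying $P_i^{(k)}(N)$ to the vacuum vector instead of applying adjoints to $e_{\alpha_0}$ as the paper does.
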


\begin{proof}
Assume that there exists $\alpha\in \FF_n^+$, $|\alpha|\geq2$, and
$i\in \{1,\ldots, n\}$ such that $a_\alpha^{(i)}\neq 0$ . Let $p\geq
2$ be the smallest natural number such that there exists
$\alpha_0\in \FF_n^+$, $|\alpha_0|=p$, and $i_0\in \{1,\ldots, n\}$
such that $a_{\alpha_0}^{(i_0)}\neq 0$. Then $F$ has the form
$F=(F_1,\ldots, F_n)$, where, for  each  $i=1,\ldots, n$,
$$
F_i(Z_1,\ldots, Z_n)=Z_i+ H_i(Z_1,\ldots, Z_n) \quad \text{ and }
\quad H_i(Z_1,\ldots, Z_n)= \sum_{k=p}^\infty G_k^{(i)}(Z_1,\ldots,
Z_n)
$$
 where
$G_k^{(i)}(Z_1,\ldots, Z_n):= \sum_{|\alpha|=k} a_\alpha^{(i)}
Z_\alpha$ for each $k\geq p$ and $i\in \{1,\ldots, n\}$.
 Note that, for each $i\in \{1,\ldots, n\}$,
$G_p^{(i)}\circ F$ is a power series and
$$
(G_p^{(i)}\circ F)(Z_1,\ldots, Z_n)=\sum_{|\alpha|=p} a_\alpha^{(i)}
F_\alpha (Z_1,\ldots, Z_n)=G_p^{(i)}(Z_1,\ldots, Z_n)+
K_{p+1}^{(i)}(Z_1,\ldots, Z_n),
$$
where $K_{p+1}^{(i)}$ is a power series containing only monomials of
degree $\geq p+1$  in its representation.  Therefore,  $F\circ F$ is
a power series and
\begin{equation*}
\begin{split}
(F\circ F)(Z_1,\ldots, Z_n) &=(Z_1,\ldots,
Z_n)+\left(2G_p^{(1)}(Z_1,\ldots, Z_n),\ldots,2G_p^{(n)}(Z_1,\ldots, Z_n)\right)\\
&\qquad+ \left(K_{p+1}^{(1)}(Z_1,\ldots, Z_n),\ldots,
K_{p+1}^{(n)}(Z_1,\ldots, Z_n)\right).
\end{split}
\end{equation*}
Iterating this process  and setting $F^N:=\underbrace{(F\circ\cdots
\circ F)}_{\text{$N$}\ times}$,  $N\in \NN$,  we deduce  that
\begin{equation*}
\begin{split}
 F^N(Z_1,\ldots, Z_n) &=(Z_1,\ldots,
Z_n)+\left(NG_p^{(1)}(Z_1,\ldots, Z_n),\ldots,NG_p^{(n)}(Z_1,\ldots, Z_n)\right)\\
&\qquad+ \left(E_{p+1}^{(1)}(Z_1,\ldots, Z_n),\ldots,
E_{p+1}^{(n)}(Z_1,\ldots, Z_n)\right),
\end{split}
\end{equation*}
where, for each $i=1,\ldots, n$, $E_{p+1}^{(i)}$  is a power series
containing only monomials of degree $\geq p+1$ in its
representation. Let $\cM\subset F^2(\cH_n)$ be the linear span
generated by $e_\alpha$, where $\alpha\in \FF_n^+$ with
$|\alpha|\leq p$. Note that $\cM$ is an invariant subspace under
$W_1^*,\ldots, W_n^*$. Using the definition of the weighted shifts
$W_1,\ldots, W_n$ associated with the noncommutative domain ${\bf
D}_f^m$, we deduce that
\begin{equation}
\label{Fk} [F^N(P_\cM W_1|_\cM,\ldots, P_\cM W_n|_\cM)]^* e_\alpha =
\left[\begin{matrix} W_1^*\\\vdots
\\W_n^*\end{matrix}\right]e_\alpha+
N\left[\begin{matrix} [G_p^{(1)}(P_\cM W_1|_\cM,\ldots, P_\cM W_n|_\cM)]^*\\\vdots\\
[G_p^{(n)}(P_\cM W_1|_\cM,\ldots, P_\cM
W_n|_\cM)]^*\end{matrix}\right]e_\alpha
\end{equation}
for any $\alpha\in \FF_n^+$ with $|\alpha|\geq 2$, and $N=1,2,
\ldots$, where $P_\cM$ is the orthogonal projection from $F^2(H_n)$
onto $\cM$. We recall that $\{e_\alpha\}_{\alpha\in \FF_n^+}$ is the
standard orthonormal basis for $F^2(H_n)$.
 Since, for any $\alpha\in \FF_n^+$, $|\alpha|=p$,
 $$W_\alpha^* e_{\alpha_0} =\begin{cases}
\frac {{1}}{\sqrt{b_{\alpha_0}^{(m)}}} 1& \text{ if }
\alpha=\alpha_0 \\
0& \text{ otherwise }
\end{cases}
 $$
 and
$G_p^{(i)}$, $i=1,\ldots, n$, are homogeneous  noncommutative
polynomials of degree $p$ and $|\alpha_0|=p$, we have
\begin{equation}\label{gam}
\gamma:=\left\|\left[\begin{matrix} [G_p^{(1)}(P_\cM W_1|_\cM,\ldots, P_\cM W_n|_\cM)]^*\\\vdots\\
[G_p^{(n)}(P_\cM W_1|_\cM,\ldots, P_\cM
W_n|_\cM)]^*\end{matrix}\right]e_{\alpha_0}\right\|\geq
\frac{|a_{\alpha_0}^{(i_0)}|}{\sqrt{b_{\alpha_0}^{(m)}}}>0.
\end{equation}
Assume that $\alpha_0=g_j \gamma_0$, where $j=1,\ldots, n$ and
$\gamma_0\in \FF_n^+$, $|\gamma_0|=p-1$. Then, due to relation
\eqref{WbWb}, we have
$W_j^*e_{\alpha_0}=\frac{\sqrt{b_{\gamma_0}^{(m)}}}{\sqrt{b_{\alpha_0}^{(m)}}}
e_{\gamma_0}$ and $W_i^*e_{\alpha_0}=0$ for $i\neq j$. Hence and
using  relation  \eqref{Fk},  we deduce that
\begin{equation}
\label{NC}
  N \gamma< \frac{\sqrt{b_{\gamma_0}^{(m)}}}{\sqrt{b_{\alpha_0}^{(m)}}}+
  \|F^N(P_\cM W_1|_\cM,\ldots, P_\cM W_n|_\cM)^*e_{\alpha_0}\|\quad
\text{ for any } \ N\in \NN.
\end{equation}
On the other hand, since $(P_\cM W_1|_\cM,\ldots, P_\cM W_n|_\cM)$
is a nilpotent $n$-tuple in ${\bf D}_f^m(\cM)$ and due to the
hypothesis, we have $ F^N(P_\cM W_1|_\cM,\ldots, P_\cM W_n|_\cM)
\subseteq {\bf D}_{f,\text{\rm nil}}^m(\cM)$. Applying  inequality
\eqref{bound}  we obtain
 $$\|F^N(P_\cM W_1|_\cM,\ldots, P_\cM W_n|_\cM)\|<\frac{1}{\min\{\sqrt{a_\alpha}: \ |\alpha|=1\}}$$
  for any $N\in \NN$, where $a_\alpha$ are the coefficients of
   $f=\sum_{|\alpha|\geq 1} a_\alpha X_\alpha$.
   We recall that $a_\alpha>0$ if $|\alpha|=1$.  Hence and using  relation \eqref{NC}, we
deduce that
$$ N \gamma<\frac{\sqrt{b_{\gamma_0}^{(m)}}}{\sqrt{b_{\alpha_0}^{(m)}}}+
\frac{1}{\min\{\sqrt{a_\alpha}: \ |\alpha|=1\}}$$ for any $N\in
\NN$, which, due to \eqref{gam}, is a contradiction. This completes
the proof.
\end{proof}

In what follows, if $L:=[a_{ij}]_{n\times n}$ is a  bounded linear
operator on $\CC^n$,  we use the matrix notation
$$
 [X_1,\ldots, X_n]{
L}:=\left(\sum_{i=1}^n a_{i1} X_i,\cdots,\sum_{i=1}^n a_{in}
X_i\right).
$$

\begin{theorem}\label{Cartan2} Let $f$ and $g$ be positive regular free
holomorphic functions with $n$  indeterminates  and let $m,l\geq 1$.
 Let $F=(F_1,\ldots, F_n)$ and  $G=(G_1,\ldots, G_n)$ be  $n$-tuples of  formal power series in $n$ noncommuting indeterminates such that
$$
F(0)=G(0)=0\quad \text{ and } \quad F\circ G=G\circ F=\text{\rm id}.
$$
If  $ F({\bf D}_{f,\text{\rm nil}}^m(\cH))={\bf D}_{g,\text{\rm
nil}}^l(\cH)$ for any Hilbert space $\cH$, then  $F$  has the form
$$ F(Z_1,\ldots, Z_n)=[Z_1,\ldots, Z_n]U,
$$
where $U$ is  an invertible bounded linear operator  on $\CC^n$.
\end{theorem}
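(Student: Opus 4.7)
The plan is to reduce the statement to the previous theorem (Theorem \ref{cartan1}) via a symmetrization argument with respect to the $S^1$-action on the nilpotent part.

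First, I would extract the linear parts. Write $F = L_U + H$ and $G = L_V + K$, where $L_U(Z) := [Z_1,\ldots,Z_n] U$ and $L_V(Z) := [Z_1,\ldots,Z_n] V$ for some $n\times n$ complex matrices $U, V$, and $H,K$ are $n$-tuples of formal power series whose monomials all have length $\geq 2$. Reading off the degree-one terms in $F\circ G = G\circ F = \mathrm{id}$ forces $UV = VU = I$, so $U\in M_n(\CC)$ is invertible with $V = U^{-1}$.

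The geometric input I need is that, for any $c\in\CC$ with $|c|=1$, the scalar action $R_c : Z\mapsto cZ$ preserves ${\bf D}_{f,\mathrm{nil}}^m(\cH)$ (and similarly ${\bf D}_{g,\mathrm{nil}}^l(\cH)$). Indeed, for such $c$ one has
$$
\Phi_{f,cZ}(Y) = \sum_{|\alpha|\geq 1} a_\alpha\, c^{|\alpha|} \overline c^{|\alpha|}\, Z_\alpha Y Z_\alpha^* = \Phi_{f,Z}(Y),
$$
so $(\mathrm{id}-\Phi_{f,cZ})^k(I)=(\mathrm{id}-\Phi_{f,Z})^k(I)\geq 0$ for $1\leq k\leq m$, and nilpotency is obviously preserved. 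Now fix such a $c$ and introduce the $n$-tuple of formal power series
$$
\Phi := G\circ R_c\circ F\circ R_{c^{-1}}.
$$
This composition makes sense because each factor vanishes at $0$. I claim $\Phi$ satisfies the hypotheses of Theorem \ref{cartan1}. On the one hand, chasing $Z\in {\bf D}_{f,\mathrm{nil}}^m(\cH)$ through $R_{c^{-1}}$, then $F$ (which by hypothesis lands in ${\bf D}_{g,\mathrm{nil}}^l(\cH)$), then $R_c$, and finally $G$ (which by the bijectivity $F({\bf D}_{f,\mathrm{nil}}^m(\cH))={\bf D}_{g,\mathrm{nil}}^l(\cH)$ together with $F\circ G=G\circ F=\mathrm{id}$ is the inverse bijection) shows $\Phi(Z)\in {\bf D}_{f,\mathrm{nil}}^m(\cH)$. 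On the other hand, the linear part of $\Phi$ is $V\cdot (cI)\cdot U\cdot (c^{-1}I) = VU = I$, so $\Phi=(Z_1+\widetilde H_1,\ldots,Z_n+\widetilde H_n)$ with each $\widetilde H_i$ of order $\geq 2$. Theorem \ref{cartan1} then gives $\Phi = \mathrm{id}$.

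Unwinding $\Phi=\mathrm{id}$ and using $F\circ G=\mathrm{id}$, I obtain $R_c\circ F\circ R_{c^{-1}} = F$, i.e.
$$
F(c^{-1}Z) = c^{-1} F(Z) \qquad \text{for every } c\in\CC \text{ with } |c|=1.
$$
Writing each component as $F_i(Z)=\sum_\alpha c_\alpha^{(i)} Z_\alpha$ and comparing coefficients of $Z_\alpha$ yields $c^{-|\alpha|} c_\alpha^{(i)} = c^{-1} c_\alpha^{(i)}$ for all $c\in S^1$, which forces $c_\alpha^{(i)}=0$ unless $|\alpha|=1$. Hence $F(Z)=[Z_1,\ldots,Z_n]U$, and $U$ is invertible by the first step.

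I expect the main obstacle to be packaging the symmetrization cleanly: one must simultaneously verify the $S^1$-invariance of ${\bf D}_{f,\mathrm{nil}}^m$ and ${\bf D}_{g,\mathrm{nil}}^l$, check that $G$ carries nilpotent elements back to nilpotent elements (which follows from the bijectivity hypothesis together with $F\circ G = G\circ F=\mathrm{id}$), and confirm that the formal composition $G\circ R_c\circ F\circ R_{c^{-1}}$ admits a legitimate power series expansion falling under the scope of Theorem \ref{cartan1}. Once these ingredients are in place, the remainder is a straightforward coefficient comparison in a scalar-homogeneity identity.
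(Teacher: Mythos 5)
Your proposal is correct and is essentially the paper's own argument: you extract the linear parts from $F\circ G=G\circ F=\mathrm{id}$ to get $UV=VU=I_n$, conjugate $F$ by the rotation $R_c$ (the paper's $H(Z)=G(e^{-i\theta}F(e^{i\theta}Z))$ is exactly your $G\circ R_c\circ F\circ R_{c^{-1}}$), invoke Theorem \ref{cartan1} using the circular invariance of the nilpotent domains, and finish by comparing coefficients in the homogeneity identity $F(cZ)=cF(Z)$. No substantive differences from the paper's proof.
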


\begin{proof}
Since $F(0)=0$, $F$ has the representation $F=(F_1,\ldots, F_n)$,
where  each $F_j$ is a power series with scalar coefficients, having
the form
\begin{equation}
\label{Fj} F_j(Z_1,\ldots, Z_n)=\sum_{k=1}^n a_{kj}Z_k+
\Psi_2^{(j)}(Z_1,\ldots, Z_n),
\end{equation}
and $\Psi_2^{(j)}$ is a power series of the form
$\Psi_2^{(j)}(Z_1,\ldots, Z_n)=\sum_{p=2}^\infty \sum_{|\alpha|=p}
a_\alpha^{(j)} Z_\alpha$, $a_\alpha^{(j)}\in \CC$.
 Similarly, $G=(G_1,\ldots, G_n)$, where  each $G_j$ is a power series,
having the form
\begin{equation}
\label{Gj} G_j(Z_1,\ldots, Z_n)=\sum_{k=1}^n b_{kj}Z_k+
\Gamma_2^{(j)}(Z_1,\ldots, Z_n),
\end{equation}
and $\Gamma_2^{(j)}$ is a power series of the form
$\Gamma_2^{(j)}(Z_1,\ldots, Z_n)=\sum_{p=2}^\infty \sum_{|\alpha|=p}
b_\alpha^{(j)} Z_\alpha$, $b_\alpha^{(j)}\in \CC$. Consider the
matrices $U:=[a_{ij}]_{n\times n}$ and $B:=[b_{ij}]_{n\times n}$.
 Using the representations \eqref{Fj} and
\eqref{Gj}, we deduce that
\begin{equation*}
\begin{split}
&(G\circ F)(Z_1,\ldots, Z_n)\\
 & =\left(\sum_{j=1}^n b_{j1}
F_j+\Gamma_2^{(1)}(F_1,\ldots, F_n),\ldots, \sum_{j=1}^n b_{jn}
F_j+\Gamma_2^{(n)}(F_1,\ldots, F_n)\right)\\
&=\left(\sum_{j=1}^n b_{j1} \left(\sum_{k=1}^n a_{kj}
Z_k\right),\ldots, \sum_{j=1}^n b_{jn} \left(\sum_{k=1}^n a_{kj}
Z_k\right)\right) \\
&\quad
 +\left(\sum_{j=1}^n b_{j1} \Psi_2^{(j)}(Z_1,\ldots, Z_n),\ldots, \sum_{j=1}^n b_{jn}
  \Psi_2^{(j)}(Z_1,\ldots, Z_n)\right)
+ \left(\Gamma_2^{(1)}(F_1,\ldots,
F_n),\ldots,\Gamma_2^{(n)}(F_1,\ldots, F_n)\right)\\
&=[Z_1,\ldots, Z_n]UB + \left(\Lambda_2^{(1)}(Z_1,\ldots,
Z_n),\ldots,\Lambda_2^{(n)}(Z_1,\ldots, Z_n)\right),
\end{split}
\end{equation*}
where $\Lambda_2^{(j)}$, $j=1,\ldots, n$, are power series
containing only monomials of degree $\geq 2$ in their
representations. Since $(G\circ F)(Z_1,\ldots,
Z_n)=(Z_1,\ldots,Z_n)$, we  deduce that $UB=I_n$ and
$\Lambda_2^{(j)}(Z_1,\ldots, Z_n)=0$ for $j=1,\ldots, n$. Similarly,
since $(F\circ G)(Z_1,\ldots, Z_n)=(Z_1,\ldots,Z_n)$, we can prove
that $BU=I_n$. Therefore $U$ is an invertible operator on $\CC^n$.
Note that
$$
F_\theta(Z_1,\ldots, Z_n):= e^{-i\theta}F(e^{i\theta} Z_1, \ldots,
e^{i\theta} Z_n)
$$
is a power series, for all $\theta\in\RR$.  Due to relation
\eqref{Fj}, we  deduce that
\begin{equation*}
\begin{split}
e^{-i\theta}F(e^{i\theta} Z_1,& \ldots, e^{i\theta}
Z_n)\\
&=\left(\sum_{k=1}^n a_{k1}Z_k+
e^{-i\theta}\Psi_2^{(1)}(e^{i\theta}Z_1,\ldots,e^{i\theta}
Z_n),\ldots, \sum_{k=1}^n a_{kn}Z_k+
e^{-i\theta}\Psi_2^{(n)}(e^{i\theta}Z_1,\ldots,
e^{i\theta}Z_n)\right)
\end{split}
\end{equation*}
 Note that
\begin{equation}
\label{HGF} H(Z_1,\ldots, Z_n):=G\left(e^{-i\theta}F(e^{i\theta}
Z_1, \ldots, e^{i\theta} Z_n)\right)
\end{equation}
is a power series  with $H(0)=0$. Taking into account the
representations of the  power series involved in the definition of
$H$, calculations as above  lead to
$$
H(Z_1,\ldots, Z_n)=[Z_1,\ldots, Z_n]UB +
\left(\Phi_2^{(1)}(Z_1,\ldots, Z_n),\ldots,\Phi_2^{(n)}(Z_1,\ldots,
Z_n)\right),
$$
where $\Phi_2^{(j)}$, $j=1,\ldots, n$, are power series  containing
only monomials of degree $\geq 2$ in their representations. Note
that if $(N_1,\ldots, N_n)\in {\bf D}_{f,\text{\rm nil}}^m(\cH)$,
then $(e^{i\theta}N_1,\ldots, e^{i\theta}N_n)\in {\bf
D}_{f,\text{\rm nil}}^m(\cH)$ for  $\theta\in \RR$, and due to
relation $ F({\bf D}_{f,\text{\rm nil}}^m(\cH))={\bf D}_{g,\text{\rm
nil}}^l(\cH)$, we deduce that $F(e^{i\theta}N_1,\ldots,
e^{i\theta}N_n)\in {\bf D}_{g,\text{\rm nil}}^l(\cH)$. Consequently,
$e^{i\theta}F(e^{i\theta}N_1,\ldots, e^{i\theta}N_n)\in {\bf
D}_{g,\text{\rm nil}}^l(\cH)$ which together with relation $G({\bf
D}_{g,\text{\rm nil}}^l(\cH))={\bf D}_{f,\text{\rm nil}}^m(\cH)$
show that the  formal power series $H(Z_1,\ldots,
Z_n):=G\left(e^{-i\theta}F(e^{i\theta} Z_1, \ldots, e^{i\theta}
Z_n)\right)$ has the properties  that   $H(0)=0$ and $H({\bf
D}_{f,\text{\rm nil}}^m(\cH))\subseteq {\bf D}_{f,\text{\rm
nil}}^m(\cH)$.

 Now, since $UB=I_n$, we can
apply Theorem \ref{cartan1} to the power series $H$ to conclude that
$H(Z_1,\ldots, Z_n)=(Z_1,\ldots, Z_n)$. Hence, taking into account
that $G\circ F=id$ and due to relation \eqref{HGF}, we obtain
$$
e^{i\theta} F(Z_1,\ldots, Z_n)=F(e^{i\theta} Z_1, \ldots,
e^{i\theta} Z_n)
$$
for any $\theta\in
\RR$.  Using the representations given by \eqref{Fj},  the
latter equality implies
$$
a_{\alpha}^{(j)} e^{i\theta|\alpha|}=e^{i\theta} a_\alpha^{(j)}\quad
\text{ for any }\ \theta\in \RR,
$$
where $\alpha\in \FF_n^+$ with $|\alpha|\geq 2$, and $j=1,\ldots,n$.
Hence,  $a_\alpha^{(j)}=0$ and, consequently,
$$
F(Z_1,\ldots, Z_n)=\left(\sum_{k=1}^n a_{k1} Z_k,\ldots,
\sum_{k=1}^n a_{kn} Z_k\right)=[Z_1,\ldots, Z_n]U.
$$
The proof is complete.
\end{proof}

Let $f$ and $g$ be positive regular free holomorphic functions with
$n$ and $q$ indeterminates, respectively,  and let $m,l\geq 1$. A
map $F:{\bf D}_f^m(\cH)\to {\bf D}_g^l(\cH)$ is called free
biholomorphic function  if $F$ is a homeomorphism  in the operator
norm topology and $F|_{{\bf D}_{f,\text{\rm rad}}^m(\cH)}$ and
$F^{-1}|_{{\bf D}_{g,\text{\rm rad}}^l(\cH)}$ are   free holomorphic
functions on ${\bf D}_{f,\text{\rm rad}}^m(\cH)$ and ${\bf
D}_{g,\text{\rm rad}}^l(\cH)$, respectively. In this case,  the
domains ${\bf D}_f^m(\cH)$ and  ${\bf D}_g^l(\cH)$ are called free
biholomorphic equivalent. We denote by  $Bih({\bf D}_f^m, {\bf
D}_g^l)$
 the set of all the free biholomorphic functions $F:{\bf D}_f^m(\cH)\to{\bf D}_g^l(\cH)$.

As in the particular case $m=l=1$ (see \cite{Po-domains},
\cite{ArL}), there is  an important connection between the theory of
free biholomorphic functions on noncommutative domains  and the
theory of biholomorphic functions on domains in $\CC^d$(\cite{Kr}).
Let $\Omega_1$, $\Omega_2$ be domains (open and connected sets) in
$\CC^d$. If there exist free holomorphic maps $\varphi:\Omega_1\to
\Omega_2$ and $\psi:\Omega_2\to \Omega_1$ such that
$\varphi\circ\psi=id_{\Omega_2}$ and $\psi\circ
\varphi=id_{\Omega_1}$, then $\Omega_1$ and $\Omega_2$ are called
biholomorphic equivalent  and $\varphi$ and $\psi$ are called
biholomorphic maps.

\begin{theorem} \label{rep-finite3} Let $f$ and $g$ be positive regular free
holomorphic functions with $n$ and $q$ indeterminates, respectively,  and let $m,l,p\geq 1$. If
$F:{\bf D}_{f}^m(\cH)\to {\bf D}_{g}^l(\cH)$  is a free
biholomorphic function, then $n=q$ and
  its representation on $\CC^p$,  i.e., the map
  $F_p$ defined by
  $$
  \CC^{np^2}\supset {\bf
D}_{f}^m(\CC^p)\ni (\Lambda_1,\ldots, \Lambda_n)\mapsto
F(\Lambda_1,\ldots, \Lambda_n)\in {\bf D}_{g}^l(\CC^p) \subset
\CC^{qp^2}
$$
is a homeomorphism from   ${\bf D}_{f}^m(\CC^p)$ onto  ${\bf
D}_{g}^l(\CC^p)$ and   a biholomorphic function from $Int({\bf
D}_{f}^m(\CC^p))$ onto  $Int({\bf D}_{g}^l(\CC^p))$.
\end{theorem}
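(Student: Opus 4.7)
The plan is to reduce the finite-dimensional conclusion to the infinite-dimensional hypothesis via a block-diagonal embedding, combined with Corollaries \ref{rep-finite1}--\ref{rep-finite2} and Brouwer's invariance of domain.

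The key tool will be the following block-diagonal criterion. For $\cH=\cH_1\oplus\cH_2$ and an $n$-tuple acting block-diagonally as $T_i=T_i^{(1)}\oplus T_i^{(2)}$, induction on $k$ using $T_\alpha=T_\alpha^{(1)}\oplus T_\alpha^{(2)}$ for $|\alpha|\geq 1$ yields
$$
(id-\Phi_{f,T})^k(I_\cH)=(id-\Phi_{f,T^{(1)}})^k(I_{\cH_1})\oplus(id-\Phi_{f,T^{(2)}})^k(I_{\cH_2}),
$$
so $T\in{\bf D}_f^m(\cH_1\oplus\cH_2)$ iff $T^{(j)}\in{\bf D}_f^m(\cH_j)$ for $j=1,2$, and the same holds for $g$. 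In particular, for $\Lambda\in{\bf D}_f^m(\CC^p)$, choose a separable infinite-dimensional Hilbert space $\cK$, set $\cH:=\CC^p\oplus\cK$, and form $\widetilde\Lambda:=\Lambda\oplus 0\in{\bf D}_f^m(\cH)$.

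Next, I would show $F_p({\bf D}_f^m(\CC^p))\subseteq{\bf D}_g^l(\CC^p)$ and establish bijectivity. Since $F$ is a homeomorphism of the closed domain, each $F_j$ lies in $A({\bf D}_{f,\text{\rm rad}}^m)$, so Corollary \ref{rep-finite2} makes $F_p$ continuous on ${\bf D}_f^m(\CC^p)$ and holomorphic on its interior. For $r\in[0,1)$, $r\widetilde\Lambda\in{\bf D}_{f,\text{\rm rad}}^m(\cH)$; expanding the convergent power series for $F$ and using $\widetilde\Lambda_\alpha=\Lambda_\alpha\oplus 0$ for $|\alpha|\geq 1$ versus $I_\cH=I_{\CC^p}\oplus I_\cK$ gives $F(r\widetilde\Lambda)=F_p(r\Lambda)\oplus F(0)$. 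Letting $r\to 1^-$ and using continuity of $F$ on ${\bf D}_f^m(\cH)$ and of $F_p$ on ${\bf D}_f^m(\CC^p)$ yields $F(\widetilde\Lambda)=F_p(\Lambda)\oplus F(0)\in{\bf D}_g^l(\cH)$, from which the block criterion places $F_p(\Lambda)$ in ${\bf D}_g^l(\CC^p)$. The symmetric argument for $G:=F^{-1}$ shows $G_p$ sends ${\bf D}_g^l(\CC^p)$ into ${\bf D}_f^m(\CC^p)$. For $\mu\in{\bf D}_g^l(\CC^p)$, the same decomposition applied to $\mu\oplus F(0)\in{\bf D}_g^l(\cH)$ gives $G(\mu\oplus F(0))=G_p(\mu)\oplus G(F(0))=G_p(\mu)\oplus 0$ (since $G(F(0))=F^{-1}(F(0))=0$), and then $F\circ G=id$ yields $F_p(G_p(\mu))=\mu$; symmetrically $G_p\circ F_p=id$. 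Combined with continuity, this makes $F_p$ a homeomorphism.

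Finally, Brouwer's invariance of domain completes the proof. Specializing to $p=1$, $F_1$ is a homeomorphism between subsets of $\CC^n$ and $\CC^q$ containing Euclidean open neighborhoods of $0$ and $F(0)$ respectively (both starlike domains have nonempty interiors at these points). Invariance of domain applied to $F_1$ near $0$ gives $2n\leq 2q$ and to $G_1$ near $F(0)$ gives $2q\leq 2n$, hence $n=q$. With $n=q$ established, both ${\bf D}_f^m(\CC^p)$ and ${\bf D}_g^l(\CC^p)$ sit inside $\CC^{np^2}$; for $\Lambda\in Int({\bf D}_f^m(\CC^p))$, pick an open neighborhood $U\subset\CC^{np^2}$ with $U\subseteq{\bf D}_f^m(\CC^p)$. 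By Corollary \ref{rep-finite1}, $F_p|_U$ is holomorphic, and by the homeomorphism property it is injective, so invariance of domain forces $F_p(U)$ open in $\CC^{np^2}$ and places $F_p(\Lambda)$ in $Int({\bf D}_g^l(\CC^p))$. The symmetric statement together with $G_p\circ F_p=id$ shows $F_p$ restricts to a holomorphic bijection between the interiors with holomorphic inverse. The main technical subtlety will be justifying the boundary identity $F(\widetilde\Lambda)=F_p(\Lambda)\oplus F(0)$, which relies on the continuous extension $F\in A({\bf D}_{f,\text{\rm rad}}^m)$ from the biholomorphism hypothesis together with its finite-dimensional counterpart in Corollary \ref{rep-finite2}.
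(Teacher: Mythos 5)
Your proposal is correct and reaches the theorem by a more self-contained route than the paper. The paper disposes of the homeomorphism part in one line, citing Theorem \ref{A-infty} and Corollary \ref{rep-finite2}: the components of $F$ and $F^{-1}$ lie in $A({\bf D}_{f,\text{\rm rad}}^m)$ and $A({\bf D}_{g,\text{\rm rad}}^l)$, so their representations on $\CC^p$ are continuous on the closed domains and holomorphic on the interiors, while the transfer to $\CC^p$ of the range containments and of the identities $F\circ F^{-1}=id$ and $F^{-1}\circ F=id$ is left implicit (it rests on the Berezin-transform machinery and the composition results of Section 3). You prove exactly these implicit facts by an explicit block-diagonal dilation $\Lambda\mapsto\Lambda\oplus 0$ into $\CC^p\oplus\cK$, using the identity $(id-\Phi_{f,T^{(1)}\oplus T^{(2)}})^k(I)=(id-\Phi_{f,T^{(1)}})^k(I)\oplus(id-\Phi_{f,T^{(2)}})^k(I)$ together with the boundary relation $F(\Lambda\oplus 0)=F_p(\Lambda)\oplus F(0)$, obtained from the norm-convergent series at $r(\Lambda\oplus 0)$ and the continuous extensions as $r\to 1$; this is elementary, bypasses the composition theorems, and makes the bijectivity of $F_p$ between the closed finite-dimensional domains completely transparent, while the endgame (Brouwer's invariance of domain, both for $n=q$ at $p=1$ and for mapping interiors onto interiors) is the same "standard argument" the paper invokes, which you spell out in detail. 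One small slip: in the dimension count you apply invariance of domain to $G_1$ near $F(0)$, asserting $F(0)\in Int({\bf D}_{g}^l(\CC))$; this is not known at that stage (it would follow from invariance of domain only after $n=q$ is established, so as written it is circular). The repair is immediate: apply the same non-embedding argument to $G_1$ on a Euclidean ball about $0\in\CC^q$, which is contained in ${\bf D}_{g}^l(\CC)$ because $g$ is positive regular, to obtain $q\le n$.
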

\begin{proof} The fact that $F_p$ is a homeomorphism from   ${\bf D}_{f}^m(\CC^p)$ onto  ${\bf
D}_{g}^l(\CC^p)$ is due to Theorem \ref{A-infty} and  Corollary
\ref{rep-finite2}. Moreover, from Corollary \ref{rep-finite2}, we
also have that $F_p$ and $(F^{-1})_p$  are  holomorphic functions on
$Int({\bf D}_{f}^m(\CC^p))\subset \CC^{np^2}$ and $Int({\bf D}_{g}^l(\CC^p))\subset \CC^{qp^2}$,
respectively. Now, since $F_p$ is a homeomorphism from   ${\bf
D}_{f}^m(\CC^p)$ onto  ${\bf D}_{g}^l(\CC^p)$, a standard argument
using Brouwer's invariance of domain theorem \cite{Bo}  shows that $F_p$ is a  biholomorphic function from $Int({\bf
D}_{f}^m(\CC^p))$ onto  $Int({\bf D}_{g}^l(\CC^p))$ and $n=q$.
This  completes the
proof.
\end{proof}

We remark that under the conditions of Theorem \ref{rep-finite3},
when $p=1$, the sets  $Int({\bf D}_{f}^m(\CC))\subset \CC^n$ and
$Int({\bf D}_{g}^l(\CC))\subset \CC^q$ are Reinhardt domains which
contain $0$. According to Sunada's result \cite{Su}, we deduce that
there exists a permutation $\sigma$ of the set $\{1,\ldots, n\}$ and
scalars $\mu_1,\ldots, \mu_n >0$ such that the map
$$
Int({\bf
D}_{f}^m(\CC))\ni(z_1,\ldots, z_n)\mapsto (\mu_1z_{\sigma(1)},\ldots, \mu_n z_{\sigma(n)})\in Int({\bf D}_{g}^l(\CC))
$$
is a biholomorphic map. It would be interesting to see  if there is an  analogue of Sunada's result for our noncommutative domains.

\begin{corollary}Let $f$ and $g$ be positive regular free
holomorphic functions with $n$ and $q$ indeterminates, respectively,  and let $m,l,p\geq 1$. If $n\neq q$ or there is $p\in \{1,2,\ldots\}$ such that  $Int({\bf
D}_{f}^m(\CC^p))$ is not biholomorphic equivalent  to  $Int({\bf D}_{g}^l(\CC^p))$, then the noncommutative domains ${\bf D}_{f}^m(\cH)$ and ${\bf D}_{g}^l(\cH)$  are not free
biholomorphic equivalent.
\end{corollary}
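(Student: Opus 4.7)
The plan is simply to argue by contrapositive and quote Theorem~\ref{rep-finite3}. Assume, for contradiction, that the noncommutative domains ${\bf D}_{f}^m(\cH)$ and ${\bf D}_{g}^l(\cH)$ are free biholomorphic equivalent, i.e., that there exists a free biholomorphic map $F\in Bih({\bf D}_f^m,{\bf D}_g^l)$. I want to show that then $n=q$ and that for every $p\in\NN$ the interior $Int({\bf D}_{f}^m(\CC^p))$ is biholomorphic equivalent to $Int({\bf D}_{g}^l(\CC^p))$ in the classical sense.

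To do this, I would fix an arbitrary $p\in\NN$ and look at the representation of $F$ on $\CC^p$, namely the map
\[
F_p:{\bf D}_{f}^m(\CC^p)\to {\bf D}_{g}^l(\CC^p),\qquad (\Lambda_1,\ldots,\Lambda_n)\mapsto F(\Lambda_1,\ldots,\Lambda_n).
\]
Theorem~\ref{rep-finite3} applied to $F$ gives directly that $n=q$, that $F_p$ is a homeomorphism from ${\bf D}_{f}^m(\CC^p)$ onto ${\bf D}_{g}^l(\CC^p)$, and that $F_p$ restricts to a biholomorphic map from $Int({\bf D}_{f}^m(\CC^p))$ onto $Int({\bf D}_{g}^l(\CC^p))$. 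In particular $Int({\bf D}_{f}^m(\CC^p))$ and $Int({\bf D}_{g}^l(\CC^p))$ are biholomorphic equivalent for every $p\geq 1$.

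Thus if either $n\neq q$, or there exists some $p\geq 1$ such that the two interior domains fail to be biholomorphic equivalent, then no such free biholomorphic map $F$ can exist, proving the corollary. There is no real obstacle here: the entire content has been packaged into Theorem~\ref{rep-finite3}, and the present corollary is just a convenient contrapositive restatement that will be combined later with classical obstructions (such as Sunada's theorem \cite{Su} in the Reinhardt case $p=1$) to produce concrete non-equivalence results between noncommutative domains.
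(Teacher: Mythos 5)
Your proof is correct and is exactly the intended argument: the corollary is simply the contrapositive of Theorem \ref{rep-finite3}, which is why the paper states it without further proof. Nothing is missing or different in your treatment.
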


Let $f=\sum_{|\alpha|\geq 1}a_\alpha X_\alpha$ be a positive regular
free holomorphic function and fix $c_1,\ldots, c_n>0$. Define
$g:=\sum_{|\alpha|\geq 1} d_\alpha X_\alpha $ by setting
$d_\alpha:=\frac{1}{c^{2\alpha}}a_\alpha$ if $|\alpha|\geq 1$, where
$c:=(c_1,\ldots, c_n)$ and $c^\alpha:=c_{i_1}\cdots c_{i_k}$ when
$\alpha =g_{i_1}\cdots g_{i_k}\in \FF_n^+$. It is easy to see that
$g$ is also a positive regular free holomorphic function. Note that
the noncommutative domains ${\bf D}_f^m(\cH)$ and ${\bf D}_g^m(\cH)$
are free biholomorphic equivalent. Indeed, the map $\Lambda:{\bf
D}_f^m(\cH)\to {\bf D}_g^m(\cH)$ defined by $\Lambda(X_1,\ldots,
X_n):=(c_1X_1,\ldots, c_nX_n)$ is a free biholomorphic function.
Therefore, by rescaling  variables, we obtain free biholomorphic
equivalent domains.

In what follows, we characterize  the set $Bih_0({\bf D}_f^m, {\bf
D}_g^l)$ of all  free biholomorphic functions $F:{\bf
D}_f^m(\cH)\to{\bf D}_g^l(\cH)$ with $F(0)=0$. We use the notation
$(W_1^{(f)},\ldots, W_n^{(f)})$ for  the universal model associated
with the noncommutative domain ${\bf D}_f^m$. Here is the main
result of this section.

\begin{theorem}\label{Cartan3} Let $f$ and $g$ be positive regular free
holomorphic functions with $n$ and $q$  indeterminates, respectively,  and let $m,l\geq 1$.
A map $F:{\bf D}_f^m(\cH)\to{\bf D}_g^l(\cH)$ is a free
biholomorphic function  with $F(0)=0$ if and only if  $n=q$ and $F$  has the
form
$$ F(X_1,\ldots, X_n)=[X_1,\ldots, X_n]U,\qquad (X_1,\ldots, X_n)\in {\bf D}_f^m(\cH),
$$
where $U$ is  an invertible bounded linear operator  on
$\CC^n$ such that
$$
[W_1^{(f)},\ldots, W_n^{(f)}]U\in {\bf D}_g^l(F^2(H_n)) \quad \text{
and } \quad [W_1^{(g)},\ldots, W_n^{(g)}]U^{-1}\in {\bf
D}_f^m(F^2(H_n)).
$$

\end{theorem}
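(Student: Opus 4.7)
The plan is to prove both implications, with the forward (``only if'') direction being the substantial one. The strategy is to reduce to Theorem \ref{Cartan2} by restricting $F$ and $F^{-1}$ to the nilpotent part.

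\smallskip

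\noindent\emph{Forward direction.} Suppose $F\colon {\bf D}_f^m(\cH)\to {\bf D}_g^l(\cH)$ is a free biholomorphism with $F(0)=0$. Theorem \ref{rep-finite3} immediately gives $n=q$. Next, I would show that $F$ maps ${\bf D}_{f,\text{\rm nil}}^m(\cH)$ bijectively onto ${\bf D}_{g,\text{\rm nil}}^l(\cH)$. For any nilpotent $N\in {\bf D}_f^m(\cH)$, the power series $F(rN)=\sum_\alpha c_\alpha r^{|\alpha|}N_\alpha$ reduces to a finite polynomial in $r$ since $N_\alpha=0$ once $|\alpha|$ is large; continuity of $F$ at $N$ then yields $F(N)=\sum_\alpha c_\alpha N_\alpha$ as a finite sum. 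Since $F(0)=0$, every nonzero term has $|\alpha|\geq 1$, so any product $F_{j_1}(N)\cdots F_{j_k}(N)$ is a combination of $N_\gamma$ with $|\gamma|\geq k$; thus $F(N)$ is nilpotent, and together with $F(N)\in {\bf D}_g^l(\cH)$ we obtain $F(N)\in {\bf D}_{g,\text{\rm nil}}^l(\cH)$. The symmetric argument for $F^{-1}$ yields the bijection. Since $F\circ F^{-1}=F^{-1}\circ F=\mathrm{id}$ as maps on the radial parts, and since free holomorphic functions are determined by their coefficients (as noted after Theorem \ref{free-ho}), these are equalities of formal power series. Applying Theorem \ref{Cartan2} gives $F(Z_1,\ldots,Z_n)=[Z_1,\ldots,Z_n]U$ with $U$ invertible on $\CC^n$. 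Finally, evaluating at the universal model $(W_1^{(f)},\ldots,W_n^{(f)})\in {\bf D}_f^m(F^2(H_n))$ and using $F({\bf D}_f^m)\subseteq {\bf D}_g^l$ gives $[W_1^{(f)},\ldots,W_n^{(f)}]U\in {\bf D}_g^l(F^2(H_n))$; the analogous statement for $U^{-1}$ follows from $F^{-1}$.

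\smallskip

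\noindent\emph{Reverse direction.} Given $U$ satisfying the stated hypotheses, set $F(X):=[X_1,\ldots,X_n]U$. This is a linear (hence free holomorphic) map with $F(0)=0$. The main step is verifying $F({\bf D}_f^m(\cH))\subseteq {\bf D}_g^l(\cH)$. Fix $X\in {\bf D}_f^m(\cH)$ and $r\in[0,1)$, and apply the noncommutative Berezin transform $\cB_{rX}$ from Section~1, which is a unital completely positive map on the operator system $\overline{\text{\rm span}}\{W_\alpha W_\beta^*:\alpha,\beta\in\FF_n^+\}$, to the positivity conditions $(id-\Phi_{g,[W^{(f)}]U})^k(I)\geq 0$ for $1\leq k\leq l$ (these hold because $[W^{(f)}]U\in {\bf D}_g^l(F^2(H_n))$). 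Since each entry of $[W^{(f)}]U$ is a polynomial in the $W_i^{(f)}$ and $\cB_{rX}$ sends $W_i^{(f)}$ to $rX_i$, the inequalities transfer to $(id-\Phi_{g,[rX]U})^k(I)\geq 0$, i.e., $[rX]U\in {\bf D}_g^l(\cH)$. Letting $r\to 1$ and using continuity of $\Phi_{g,\cdot}$ in the operator norm yields $[X]U\in {\bf D}_g^l(\cH)$. Symmetrically, $F^{-1}(Y):=[Y]U^{-1}$ maps ${\bf D}_g^l(\cH)$ into ${\bf D}_f^m(\cH)$, so $F$ and $F^{-1}$ are mutually inverse continuous polynomial maps, hence $F$ is a free biholomorphism.

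\smallskip

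\noindent\emph{Main obstacle.} The most delicate step in the reverse direction is the Berezin transfer of the positivity $(id-\Phi_{g,[W^{(f)}]U})^k(I)\geq 0$: the map $\Phi_{g,Y}$ involves an infinite weighted sum, so one must either work with polynomial truncations and pass to limits, or verify that the relevant operators lie in (or are approximable within) the operator system on which $\cB_{rX}$ acts as a completely positive map. In the forward direction, the subtle but routine point is that nilpotent $n$-tuples in ${\bf D}_f^m(\cH)$ form a rich enough testing class; this is precisely the content of Theorems \ref{cartan1} and \ref{Cartan2}, which show that information on nilpotents forces rigidity of the underlying formal power series, and once those results are in hand the forward direction is essentially mechanical.
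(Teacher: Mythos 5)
Your proof is correct, and its forward direction coincides with the paper's: after Theorem \ref{rep-finite3} gives $n=q$, the paper likewise passes through nilpotent invariance and Theorem \ref{Cartan2} (your finite-sum verification that $F$ and $F^{-1}$ preserve the nilpotent parts fills in what the paper dismisses as ``easy to see''), and your evaluation at $(W_1^{(f)},\ldots,W_n^{(f)})$ is equivalent to the paper's evaluation at $(rW_1^{(f)},\ldots,rW_n^{(f)})$ followed by $r\to 1$ and norm-closedness of the domains. The converse is where you genuinely diverge. The paper does not use the Berezin transform there: it notes that $rX$ is pure, invokes the dilation theorem of \cite{Po-Berezin} to realize $rX_i^*=({W_i^{(f)}}^*\otimes I_\cM)|_{\cH}$ with $\cH$ co-invariant, and transfers the positivity $(id-\Phi_{g,[W^{(f)}]U})^k(I)\geq 0$ to $rX$ by compressing the WOT-convergent series term by term, which sidesteps any continuity question for the Berezin transform. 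Your completely positive transfer via $\cB_{rX}$ is morally the same mechanism (the extended Berezin transform is exactly compression by the Berezin kernel), and the obstacle you flag is real but closable exactly as in the paper's own proof of Lemma \ref{range}: since the joint spectral radius of $rX$ is at most $r<1$, the (extended) Berezin transform at $rX$ is defined on all of $B(F^2(H_n))$ and is WOT/SOT-continuous on bounded sets, so it passes through the series defining $\Phi_{g,\cdot}^p(I)$; the dilation route buys freedom from these continuity considerations, while your route stays inside the Berezin machinery of Sections 1--3. One small point: to let $r\to 1$ at the end, it is cleaner to invoke the standing assumption that ${\bf D}_g^l(\cH)$ is closed in the operator norm (as the paper does) rather than a norm-continuity claim for $Y\mapsto \Phi_{g,Y}(I)$.
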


\begin{proof} Assume that $F:{\bf D}_f^m(\cH)\to{\bf D}_g^l(\cH)$ is a free
biholomorphic function  with $F(0)=0$. According to Theorem
\ref{rep-finite3}, we must have $n=q$. Then    $F$ has a
representation $F=(F_1,\ldots, F_n)$, where $F_j$ is a power series
with scalar coefficients, having the  representation
\begin{equation*}
 F_j(Z_1,\ldots, Z_n)=\sum_{k=1}^n a_{kj}Z_k+
\Psi_2^{(j)}(Z_1,\ldots, Z_n),
\end{equation*}
and $\Psi_2^{(j)}$ is a power series of the form
$\Psi_2^{(j)}(Z_1,\ldots, Z_n)=\sum_{p=2}^\infty \sum_{|\alpha|=p}
a_\alpha^{(j)} Z_\alpha$. It is easy to see that if $(N_1,\ldots,
N_n)\in {\bf D}_{f,\text{\rm nil}}^m(\cH)$, then $F(N_1,\ldots,
N_n)\in {\bf D}_{g,\text{\rm nil}}^l(\cH)$.  Similarly, one can show
that   $ F^{-1}({\bf D}_{g,\text{\rm nil}}^l(\cH))\subseteq {\bf
D}_{f,\text{\rm nil}}^m(\cH)$. Therefore, we have  $F({\bf
D}_{f,\text{\rm nil}}^m(\cH))={\bf D}_{g,\text{\rm nil}}^l(\cH)$.
Applying Theorem \ref{Cartan2}, we deduce that $ F(Z_1,\ldots,
Z_n)=[Z_1,\ldots, Z_n]U$,  where $U$ is  an invertible bounded
linear operator  on $\CC^n$. Since $F:{\bf D}_f^m(\cH)\to{\bf
D}_g^l(\cH)$ is a free biholomorphic function,  $(rW_1^{(f)},\ldots,
rW_n^{(f)})\in{\bf D}_{f, \text{\rm rad}}^m(\cH)$, and
$(rW_1^{(g)},\ldots, rW_n^{(g)})\in {\bf D}_{g, \text{\rm
rad}}^l(\cH)$, we deduce that
$$
[rW_1^{(f)},\ldots, rW_n^{(f)}]U\in {\bf D}_g^l(F^2(H_n)) \quad
\text{ and } \quad [rW_1^{(g)},\ldots, rW_n^{(g)}]U^{-1}\in {\bf
D}_f^m(F^2(H_n))
$$
for any $r\in [0,1)$. Since the domains ${\bf D}_g^l(F^2(H_n))$ and
${\bf D}_f^m(F^2(H_n))$ are closed in the operator norm topology,
and taking $r\to 1$,  we obtain the assertion of  the theorem.

Conversely, assume that $F$  has the form
$$ F(X_1,\ldots, X_n)=[X_1,\ldots, X_n]U,\qquad (X_1,\ldots, X_n)\in {\bf D}_f^m(\cH),
$$
where $U$ is  an invertible bounded linear operator  on $\CC^n$ such
that
\begin{equation}
\label{cond-incl}
 [W_1^{(f)},\ldots, W_n^{(f)}]U\in {\bf
D}_g^l(F^2(H_n)) \quad \text{ and } \quad [W_1^{(g)},\ldots,
W_n^{(g)}]U^{-1}\in {\bf D}_f^m(F^2(H_n)).
\end{equation}
Define the free holomorphic function $G$ by setting
$$ G(Y_1,\ldots, Y_n)=[Y_1,\ldots, Y_n]U^{-1},\qquad (Y_1,\ldots, Y_n)\in {\bf
D}_g^l(\cH).
$$
It is clear that $F$ and $G$ are continuous functions  in the
operator norm topology.  Moreover, note that $F|_{{\bf
D}_{f,\text{\rm rad}}^m(\cH)}$  and  $F^{-1}|_{{\bf D}_{g,\text{\rm
rad}}^l(\cH)}$ are free holomorphic functions on ${\bf
D}_{f,\text{\rm rad}}^m(\cH)$ and ${\bf D}_{g,\text{\rm
rad}}^l(\cH)$, respectively. Since $F\circ G=G\circ F=\text{\rm
id}$, it remains to show that
$$
F({\bf D}_f^m(\cH))\subseteq {\bf D}_g^l(\cH)\quad \text{ and }\quad
G({\bf D}_g^l(\cH))\subseteq {\bf D}_f^m(\cH).
$$
To this end, let $(X_1,\ldots, X_n)\in {\bf D}_f^m(\cH)$ and fix
$r\in [0, 1)$. Then $(rX_1,\ldots, rX_n)\in {\bf D}_{f,\text{\rm
rad}}^m(\cH)\subset {\bf D}_{f,\text{\rm pure}}^m(\cH)$  and, due to
the dilation theorem from \cite{Po-Berezin} there is a separable
infinite dimensional Hilbert space  $\cM$ such that
\begin{equation}
\label{dilation} rX_i^*=({W_i^{(f)}}^* \otimes I_\cM)|_{\cH},\qquad
i=1,\ldots, n,
\end{equation}
where $\cH$ is identified with a co-invariant subspace  of
$F^2(H_n)\otimes \cM$ under the operators ${W_1^{(f)}}^* \otimes
I_\cM$, $\ldots, $ ${W_n^{(f)}}^* \otimes I_\cM$. Let $U:=[a_{ji}]$
be the matrix representation of the invertible operator from
relation \eqref{cond-incl}. Setting $L_i:=\sum_{k=1}^n a_{ki}
W_k^{(f)}$, $i=1,\ldots, n$,  the first part of relation
\eqref{cond-incl} implies
\begin{equation} \label{figi}
(id-\Phi_{g,{\bf L}})^l(I_{F^2(H_n)\otimes \cM}) =\sum_{p=0}^l
(-1)^p \left(\begin{matrix} l\\p\end{matrix}\right)\Phi_{g,{\bf
L}}^p(I_{F^2(H_n)\otimes \cM})\geq 0,
\end{equation}
where ${\bf L}:=(L_1\otimes I_\cM,\ldots, L_n\otimes I_\cM)$. Note
that each $\Phi_{g,{\bf L}}^p(I_{F^2(H_n)\otimes \cM})$ is given by a
series which is convergent in the  weak operator topology and, due
to condition \eqref{dilation}, we deduce that
$$
\Phi_{g,F(rX_1,\ldots, rX_n)}^p (I_\cH)=P_\cH  \Phi_{g,{\bf
L}}^p(I_{F^2(H_n)\otimes \cM})|_{\cH},\qquad p=0,1,\ldots, l.
$$
Combining this relation  with  inequality \eqref{figi}, we obtain
$(id-\Phi_{g,F(rX_1,\ldots, rX_n)})^l(I_\cH)\geq 0$, which shows
that $F(rX_1,\ldots, rX_n)\in {\bf D}_g^l(\cH)$ for any $r\in
[0,1)$. Since ${\bf D}_g^l(\cH)$ is closed in the operator norm
topology and $F(rX_1,\ldots, rX_n)=r[X_1,\ldots, X_n]U$, we deduce
that $[X_1,\ldots, X_n]U\in {\bf D}_g^l(\cH)$ for any $n$-tuple
$(X_1,\ldots, X_n)\in {\bf D}_f^m(\cH)$. This proves that $F({\bf
D}_f^m(\cH))\subseteq {\bf D}_g^l(\cH)$. Similarly, one can prove
the inclusion $ G({\bf D}_g^l(\cH))\subseteq {\bf D}_f^m(\cH)$. This
completes the proof.
\end{proof}

We denote by $Aut({\bf D}_f^m):=Bih({\bf D}_f^m, {\bf D}_f^m)$ the set of all
 free biholomorphic functions  of ${\bf D}_f^m(\cH)$.
 Due to Theorem  \ref{more-prop},  $Aut({\bf D}_f^m)$ is  a group with respect to the
composition of free holomorphic functions.
As a consequence  of Theorem \ref{Cartan3},
  we obtain the following characterization of $Aut_0({\bf D}_f^m)$, the subgroup of all  free holomorphic automorphisms of
 ${\bf D}_f^m(\cH)$  that fix the origin.
\begin{corollary}
\label{aut1} Let $f$   be  a positive regular free holomorphic
function with $n$  indeterminates  and let $m \geq 1$. A map $\Psi:
{\bf D}_f^m(\cH)\to {\bf D}_f^m(\cH)$ is a free holomorphic
automorphism with $\Psi(0)=0$  if and only if  it has the form
\begin{equation*}
 \Psi(X_1,\ldots, X_n)=[X_1,\ldots, X_n]U , \qquad
(X_1,\ldots, X_n) \in {\bf D}_f^m(\cH),
\end{equation*}
where   $U$  is an invertible operator on $\CC^n$  such that
$$
[W_1,\ldots, W_n]U\in {\bf D}_f^m(F^2(H_n)) \quad \text{ and } \quad
[W_1,\ldots, W_n]U^{-1}\in {\bf D}_f^m(F^2(H_n)),
$$
 and $(W_1,\ldots, W_n)$ is the universal model associated
with the noncommutative domain ${\bf D}_f^m$.
\end{corollary}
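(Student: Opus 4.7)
The plan is to derive this corollary as the specialization of Theorem \ref{Cartan3} to the case $f=g$ and $m=l$ (which forces $n=q$). Since $Aut_0({\bf D}_f^m)$ is by definition the subset of $Bih_0({\bf D}_f^m, {\bf D}_f^m)$, there is essentially nothing new to prove beyond noting that both directions of Theorem \ref{Cartan3} go through verbatim when the target and source domains coincide.

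For the forward direction, suppose $\Psi\in Aut_0({\bf D}_f^m)$. Then $\Psi$ is a free biholomorphic map from ${\bf D}_f^m(\cH)$ to itself with $\Psi(0)=0$, so Theorem \ref{Cartan3} (with $g=f$, $l=m$) immediately yields the linear form $\Psi(X_1,\ldots,X_n)=[X_1,\ldots,X_n]U$ for some invertible $U$ on $\CC^n$, together with the two membership conditions $[W_1,\ldots,W_n]U\in{\bf D}_f^m(F^2(H_n))$ and $[W_1,\ldots,W_n]U^{-1}\in{\bf D}_f^m(F^2(H_n))$ obtained by applying Theorem \ref{Cartan3} both to $\Psi$ and to its inverse $\Psi^{-1}$ (whose linear matrix is $U^{-1}$).

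For the reverse direction, given any invertible $U$ on $\CC^n$ satisfying the two stated membership conditions, one defines $\Psi(X):=[X_1,\ldots,X_n]U$ and $\Psi^{-1}(X):=[X_1,\ldots,X_n]U^{-1}$. These are continuous in the operator norm topology, restrict to free holomorphic functions on the radial domain, and satisfy $\Psi\circ\Psi^{-1}=\Psi^{-1}\circ\Psi=\text{id}$; the only nontrivial assertion is that $\Psi$ maps ${\bf D}_f^m(\cH)$ into itself (and likewise for $\Psi^{-1}$). This is precisely what the sufficiency part of Theorem \ref{Cartan3} establishes, via a dilation argument using the pure model $(W_1,\ldots,W_n)$ on $F^2(H_n)\otimes\cM$ and the fact that ${\bf D}_f^m(\cH)$ is closed in the norm topology.

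Since both implications reduce directly to Theorem \ref{Cartan3} with $f=g$, $m=l$, the entire proof amounts to a single sentence invoking that theorem. There is no real obstacle: the only subtlety is remembering to apply Theorem \ref{rep-finite3} (already used inside Theorem \ref{Cartan3}) to conclude that the number of indeterminates of source and target must agree, which here is automatic.
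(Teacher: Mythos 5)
Your proposal is correct and matches the paper exactly: the corollary is stated there as an immediate specialization of Theorem \ref{Cartan3} to $f=g$, $m=l$ (hence $q=n$), with no separate argument given. The only minor redundancy is that you need not apply Theorem \ref{Cartan3} a second time to $\Psi^{-1}$, since a single application already yields both membership conditions $[W_1,\ldots,W_n]U\in{\bf D}_f^m(F^2(H_n))$ and $[W_1,\ldots,W_n]U^{-1}\in{\bf D}_f^m(F^2(H_n))$.
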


Now, we  can  characterize the unit ball of $B(\cH)^n$ among the
noncommutative
 domains ${\bf D}_f^m(\cH)$,  up to  free biholomorphisms.

\begin{corollary}\label{biho-equi1} Let $g$ be  a positive regular free
holomorphic function with $q$  indeterminates and let $l\geq 1$.
Then the noncommutative domain ${\bf D}_g^l(\cH)$ is biholomorphic
equivalent to the unit ball $[B(\cH)^n]_1$ if and only if $q=n$ and
there is  an invertible bounded linear operator $U\in B(\CC^n)$ such
that
$$
[S_1,\ldots, S_n]U\in {\bf D}_g^l(F^2(H_n)) \quad \text{ and } \quad
[W_1^{(g)},\ldots, W_n^{(g)}]U^{-1}\in [B(\cH)^n]_1^-,
$$
where $S_1,\ldots, S_n$ are the left creation operators on the full
Fock space $F^2(H_n)$.
\end{corollary}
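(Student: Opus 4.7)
The strategy is to recognize that the unit ball case is precisely the domain ${\bf D}_f^1(\cH)=[B(\cH)^n]_1^-$ corresponding to $f=X_1+\cdots+X_n$ and $m=1$, in which case the universal model coincides with the $n$-tuple $(S_1,\ldots,S_n)$ of left creation operators on $F^2(H_n)$. With this identification, the statement of the corollary becomes almost a transcription of Theorem \ref{Cartan3}; the one subtlety is that Theorem \ref{Cartan3} presumes that the biholomorphism fixes the origin, whereas no such normalization is built into the hypothesis.

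I would dispose of the forward direction as follows. Let $F:[B(\cH)^n]_1^-\to{\bf D}_g^l(\cH)$ be a free biholomorphism. Since ${\bf D}_g^l(\cH)$ is a closed starlike domain with $0$ in its interior, and $F^{-1}$ is continuous in the operator norm topology, the point $T:=F^{-1}(0)$ must lie in the open unit ball $[B(\cH)^n]_1$. Invoking the noncommutative M\"obius-type automorphisms of $[B(\cH)^n]_1^-$ constructed in \cite{Po-automorphism}, one obtains an automorphism $\Psi$ of the unit ball with $\Psi(0)=T$. Then $\widetilde F:=F\circ\Psi$ is still a free biholomorphism from $[B(\cH)^n]_1^-$ onto ${\bf D}_g^l(\cH)$, and now $\widetilde F(0)=F(T)=0$. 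Applying Theorem \ref{Cartan3} (with $f=X_1+\cdots+X_n$, $m=1$) to $\widetilde F$ forces $q=n$ and supplies an invertible $U\in B(\CC^n)$ for which $\widetilde F(X)=[X_1,\ldots,X_n]U$, along with the inclusions $[S_1,\ldots,S_n]U\in{\bf D}_g^l(F^2(H_n))$ and $[W_1^{(g)},\ldots,W_n^{(g)}]U^{-1}\in{\bf D}_f^1(F^2(H_n))=[B(F^2(H_n))^n]_1^-$, which is the conclusion sought.

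For the converse, I would assume $q=n$ together with the two stated inclusions and apply the reverse direction of Theorem \ref{Cartan3}, again in the case $f=X_1+\cdots+X_n$, $m=1$. This produces the map $F(X_1,\ldots,X_n):=[X_1,\ldots,X_n]U$ as an explicit free biholomorphism between $[B(\cH)^n]_1^-$ and ${\bf D}_g^l(\cH)$, establishing the equivalence. The only real piece of work is the normalization step in the forward implication, which rests on the transitivity of $\mathrm{Aut}([B(\cH)^n]_1^-)$; everything else is a direct specialization of Theorem \ref{Cartan3} to the unit-ball setting.
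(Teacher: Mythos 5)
Your overall skeleton is the same as the paper's: normalize the biholomorphism by a M\"obius-type automorphism of the ball so that it fixes the origin, then quote Theorem \ref{Cartan3} in the case $f=X_1+\cdots+X_n$, $m=1$ (where the universal model is $(S_1,\ldots,S_n)$), and get the converse from the same theorem. The gap is in your normalization step. First, $\mathrm{Aut}([B(\cH)^n]_1)$ is \emph{not} transitive on operator points of the ball: by \cite{Po-automorphism} it is isomorphic to $\mathrm{Aut}(\BB_n)$, and the automorphisms $\Psi_{z_0}$ one can invoke move only scalar points $z_0\in\BB_n$ (i.e.\ tuples $(\lambda_1 I,\ldots,\lambda_n I)$) to the origin. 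So the sentence ``everything rests on the transitivity of $\mathrm{Aut}([B(\cH)^n]_1^-)$'' is appealing to a false principle. Your argument can be saved because $T:=F^{-1}(0)$ is automatically a scalar tuple --- it is the constant term of the power series of the free holomorphic map $F^{-1}$ --- but you never say this, and without it the existence of an automorphism $\Psi$ with $\Psi(0)=T$ is unjustified.

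Second, your reason for $T$ lying in the \emph{open} ball (``${\bf D}_g^l(\cH)$ is closed starlike with $0$ in its interior and $F^{-1}$ is norm-continuous'') does not prove anything: a homeomorphism between the closed sets need not carry a point interior to ${\bf D}_g^l(\cH)$ in $B(\cH)^n$ to a point interior to $[B(\cH)^n]_1^-$, and in infinite dimensions there is no invariance of domain to fall back on. This is exactly why the paper routes the argument through the scalar representation: by Theorem \ref{rep-finite3} (whose proof uses Brouwer's invariance of domain \cite{Bo} on $\CC^{np^2}$, and which also yields $q=n$), the representation of the biholomorphism on $\CC$ maps $Int({\bf D}_g^l(\CC))$ biholomorphically onto $\BB_n$, so the constant term $z_0$ lies in the open unit ball $\BB_n$, and only then can Theorem 2.3 of \cite{Po-automorphism} be applied; the composition is then a free biholomorphism fixing $0$ by Theorem \ref{more-prop}, and Theorem \ref{Cartan3} finishes. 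With these two repairs (scalarity of $F^{-1}(0)$, and membership in $\BB_n$ via Theorem \ref{rep-finite3} or Corollary \ref{rep-finite2}), your proof coincides with the paper's; the converse direction as you state it is fine.
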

\begin{proof} Let $p:=X_1+\cdots + X_n$ and recall  that  ${\bf D}_p^1(\cH)=[B(\cH)^n]_1^-$.
Assume that $F:{\bf D}_{g}^l(\cH)\to {\bf D}_{p}^1(\cH)$  is a free
biholomorphic function. Due to Theorem \ref{rep-finite3},  $n=q$ and
  the  representation  of $F$ on $\CC$,  i.e., the map
  $F_1$ defined by
  $$
  \CC^{n}\supset {\bf
D}_{p}^1(\CC)\ni (\lambda_1,\ldots, \lambda_n)\mapsto
F(\lambda_1,\ldots, \lambda_n)\in {\bf D}_{g}^l(\CC) \subset \CC^{n}
$$
is a homeomorphism from   ${\bf D}_{p}^1(\CC)$ onto  ${\bf
D}_{g}^l(\CC)$ and   a biholomorphic function from $Int({\bf
D}_{p}^1(\CC))=[B(\cH)^n]_1$ onto  $Int({\bf D}_{g}^l(\CC))$.
Consequently, $z_0:=F(0)\in \BB_n$, the open unit ball of $\CC^n$.
According to Theorem 2.3 from \cite{Po-automorphism}, there is a
free holomorphic  automorphism $\Psi_{z_0}$ of the open unit ball
$[B(\cH)^n]_1$ such that $\Psi_{z_0}(z_0)=0$. Moreover, $\Psi_{z_0}$
has a continuous extension to the closed ball $[B(\cH)^n]_1^-$ which
is a homeomorphism of $[B(\cH)^n]_1^-$. Then, according to Theorem
\ref{more-prop},  the composition $\Psi_{z_0}\circ F:{\bf
D}_{g}^l(\cH)\to {\bf D}_{p}^1(\cH)$ is a free biholomorphic
function with the property that $(\Psi_{z_0}\circ F)(0)=0$. Applying
now Theorem \ref{Cartan3}, we find an invertible bounded linear
operator $U\in B(\CC^n)$ such that
$$
[S_1,\ldots, S_n]U\in {\bf D}_g^l(F^2(H_n)) \quad \text{ and } \quad
[W_1^{(g)},\ldots, W_n^{(g)}]U^{-1}\in [B(\cH)^n]_1^-,
$$
and such that $(\Psi_{z_0}\circ F)(X_1,\ldots, X_n)=[X_1,\ldots,
X_n]U$ for all $(X_1,\ldots, X_n)\in {\bf D}_{g}^l(\cH)$. The
converse follows easily  from Theorem \ref{Cartan3}.
\end{proof}
We remark that in the particular case when $l=1$ we will get a more
precise result in the next section (see Corollary \ref{biho-equi2}).

In what follows  we use  again  the interaction between  the theory
of functions in several complex variables   and our noncommutative
theory  to obtain  some results on the  classification  of  the
noncommutative domains ${\bold D}_f^m(\cH)$, $m\geq 1$.

\begin{theorem} \label{Thullen} Let $f$ and $g$ be positive regular free
holomorphic functions with n indeterminates and let $m,l\geq 1$. Assume that
 there is $p'\in \{1,2,\ldots\}$ such that   the domains $Int({\bf
D}_{f}^m(\CC^{p'}))$  and $Int({\bf D}_{g}^l(\CC^{p'}))$ are linearly equivalent and
  all the automorphisms of  $Int({\bf
D}_{f}^m(\CC^{p'}))$    fix the origin.

Then the noncommutative domains ${\bf D}_{f}^m(\cH)$ and ${\bf D}_{g}^l(\cH)$  are  free
biholomorphic equivalent if and only if  there is  an invertible bounded
linear operator $U\in B(\CC^n)$ such that
$$
[W_1^{(f)},\ldots, W_n^{(f)}]U\in {\bf D}_g^l(F^2(H_n)) \quad \text{
and } \quad [W_1^{(g)},\ldots, W_n^{(g)}]U^{-1}\in {\bf
D}_f^m(F^2(H_n)).
$$
\end{theorem}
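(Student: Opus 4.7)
The plan is to reduce everything to two earlier results: Theorem \ref{rep-finite3} (which sends us from the noncommutative world to the holomorphic world on $\CC^{p'}$) and the Cartan-type linearization Theorem \ref{Cartan3} (which, once the biholomorphism is known to fix $0$, forces it to be linear). The new hypothesis on $Int({\bf D}_f^m(\CC^{p'}))$ is used solely to promote an arbitrary biholomorphism into one that fixes the origin.

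\smallskip
\textbf{Sufficiency.} Given an invertible operator $U\in B(\CC^n)$ satisfying
$[W_1^{(f)},\ldots,W_n^{(f)}]U\in{\bf D}_g^l(F^2(H_n))$ and $[W_1^{(g)},\ldots,W_n^{(g)}]U^{-1}\in{\bf D}_f^m(F^2(H_n))$, I would just set $F(X_1,\ldots,X_n):=[X_1,\ldots,X_n]U$. Since $F$ is linear with scalar coefficients, $F(0)=0$, and the two hypotheses are exactly the inclusions required by Theorem \ref{Cartan3}, which immediately yields that $F\in Bih({\bf D}_f^m,{\bf D}_g^l)$.

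\smallskip
\textbf{Necessity.} Assume $F:{\bf D}_f^m(\cH)\to {\bf D}_g^l(\cH)$ is free biholomorphic. I would first apply Theorem \ref{rep-finite3} at level $p=p'$ to get that the representation $F_{p'}$ is a biholomorphism from $Int({\bf D}_f^m(\CC^{p'}))$ onto $Int({\bf D}_g^l(\CC^{p'}))$. Let $L:Int({\bf D}_f^m(\CC^{p'}))\to Int({\bf D}_g^l(\CC^{p'}))$ be a linear equivalence supplied by the hypothesis; since $L$ is linear, $L(0)=0$. Then the composition $L^{-1}\circ F_{p'}$ is a (classical) biholomorphic automorphism of $Int({\bf D}_f^m(\CC^{p'}))$, and by the second half of the hypothesis it fixes the origin. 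Hence $F_{p'}(0)=L(0)=0$.

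\smallskip
Because $F$ has scalar coefficients, $F(0,\ldots,0)$ is an $n$-tuple of scalar operators on $\cH$, hence corresponds to a point $z_0\in\CC^n$; under the representation $F_{p'}$ this point becomes $(z_0^{(1)}I_{p'},\ldots,z_0^{(n)}I_{p'})$, so $F_{p'}(0)=0$ forces $z_0=0$, i.e., $F(0)=0$. Now Theorem \ref{Cartan3} applies directly to $F$ and produces the invertible $U\in B(\CC^n)$ with $F(X)=[X_1,\ldots,X_n]U$ together with the two required model-operator inclusions, completing the proof.

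\smallskip
The only nontrivial ingredient is the reduction $F_{p'}(0)=0\Rightarrow F(0)=0$, and this is purely bookkeeping about how scalar constant terms pass through the representation map. Once past that, the argument is simply the chain $(\text{Thm \ref{rep-finite3}})\;\to\;(\text{hypothesis})\;\to\;(\text{Thm \ref{Cartan3}})$.
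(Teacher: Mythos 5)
Your proposal is correct and follows essentially the same route as the paper: Theorem \ref{rep-finite3} to pass to the finite-dimensional representation, the linear equivalence (which fixes $0$) plus the hypothesis on automorphisms of $Int({\bf D}_{f}^m(\CC^{p'}))$ to conclude $F_{p'}(0)=0$ and hence $F(0)=0$, and then Theorem \ref{Cartan3} for both the linearization and the converse. The only difference is that you spell out the (routine) step $F_{p'}(0)=0\Rightarrow F(0)=0$, which the paper treats as obvious.
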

\begin{proof} Since  the domains $Int({\bf
D}_{f}^m(\CC^{p'}))$  and $Int({\bf D}_{g}^l(\CC^{p'}))$ are
linearly equivalent, there is  a biholomorphic function $\varphi
:Int({\bf D}_{f}^m(\CC^{p'}))\to Int({\bf D}_{g}^l(\CC^{p'}))$ such
that $\varphi(0)=0$. Suppose that $F:{\bf D}_{f}^m(\cH)\to {\bf
D}_{g}^l(\cH)$ is a free biholomorphic function. According to
Theorem \ref{rep-finite3},  $n=q$ and
  the  representation  of $F$ on $\CC^{p'}$,  i.e., the map
  $F_{p'}$ defined by
  $$
  \CC^{n{p'}^2}\supset {\bf
D}_{f}^m(\CC^{p'})\ni (\Lambda_1,\ldots, \Lambda_n)\mapsto
F(\Lambda_1,\ldots, \Lambda_n)\in {\bf D}_{g}^l(\CC^{p'}) \subset
\CC^{n{p'}^2}
$$
is a  biholomorphic function from $Int({\bf D}_{f}^m(\CC^{p'})) $
onto  $Int({\bf D}_{g}^l(\CC^{p'}))$. Consequently, $\varphi^{-1}
\circ F_{p'}$ is an automorphism of $Int({\bf D}_{f}^m(\CC^{p'}))$.
Due to the hypothesis, we have $(\varphi^{-1} \circ F_{p'})(0)=0$.
Therefore, $F_{p'}(0)=0$, which obviously implies $F(0)=0$. Applying
now Theorem \ref{Cartan3}, the result follows. The converse is due
to the same theorem. The proof is complete.
\end{proof}

Using Thullen characterization of domains in $\CC^2$  with non-compact automorphism
group (\cite{Th}) and Theorem \ref{Cartan3}, we can obtain the following
classification result for noncommutative domains generated by positive regular
 free holomorphic functions in 2 indeterminates. We recall that
 Thullen proved that if a bounded Reinhardt domain in $\CC^2$ has a
 biholomorphic map that does not  fix the origin,  then the domain
 is linearly equivalent  to one of the following: polydisc, unit
 ball, or the so-called Thullen domain. Combining this result with
 Theorem \ref{Thullen}, we obtain the following consequence.

\begin{corollary} \label{Thullen2} Let $f$ and $g$ be positive regular free
holomorphic functions with 2 indeterminates and let $m,l\geq 1$. Assume that
  the Reinhardt domains  $Int({\bf
D}_{f}^m(\CC))$ and   $Int({\bf D}_{g}^l(\CC))$  are linearly
equivalent but they are not linearly  equivalent to  either the
polydisc, the unit ball, or any Thullen domain in $\CC^2$.

Then the noncommutative domains ${\bf D}_{f}^m(\cH)$ and ${\bf D}_{g}^l(\cH)$  are  free
biholomorphic equivalent if and only if  there is  an invertible bounded linear operator $U\in B(\CC^2)$ such that
$$
[W_1^{(f)}, W_2^{(f)}]U\in {\bf D}_g^l(F^2(H_2)) \quad \text{
and } \quad [W_1^{(g)},W_2^{(g)}]U^{-1}\in {\bf
D}_f^m(F^2(H_2)).
$$
\end{corollary}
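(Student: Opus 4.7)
The plan is to reduce Corollary \ref{Thullen2} directly to Theorem \ref{Thullen} (with the choice $p'=1$) by invoking Thullen's classification theorem as a black box. Note first that the bound \eqref{bound} applied to scalar tuples shows that $\operatorname{Int}({\bf D}_{f}^m(\CC))\subset \CC^2$ is a bounded Reinhardt domain containing the origin (and similarly for $g$), so Thullen's classification is applicable.

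The key observation is that the two hypotheses of Theorem \ref{Thullen} are built into our assumptions. The first hypothesis, that $\operatorname{Int}({\bf D}_{f}^m(\CC^{p'}))$ and $\operatorname{Int}({\bf D}_{g}^l(\CC^{p'}))$ are linearly equivalent for some $p'$, is satisfied with $p'=1$ by assumption. For the second hypothesis, that every automorphism of $\operatorname{Int}({\bf D}_{f}^m(\CC^{p'}))$ fixes the origin, I would argue by contraposition: if $\operatorname{Int}({\bf D}_{f}^m(\CC))$ admitted a biholomorphic automorphism that does not fix $0$, then by Thullen's theorem \cite{Th} it would be linearly equivalent to the bidisc, the unit ball of $\CC^2$, or a Thullen domain, contradicting our standing assumption. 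Hence all automorphisms of $\operatorname{Int}({\bf D}_{f}^m(\CC))$ fix the origin.

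Once both hypotheses of Theorem \ref{Thullen} are verified with $p'=1$ and $n=q=2$, the theorem is invoked directly: the noncommutative domains ${\bf D}_{f}^m(\cH)$ and ${\bf D}_{g}^l(\cH)$ are free biholomorphic equivalent if and only if there is an invertible $U\in B(\CC^2)$ satisfying
\[
[W_1^{(f)},W_2^{(f)}]U\in {\bf D}_g^l(F^2(H_2)) \quad \text{and} \quad [W_1^{(g)},W_2^{(g)}]U^{-1}\in {\bf D}_f^m(F^2(H_2)),
\]
which is exactly the desired conclusion.

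There is no real obstacle here beyond correctly invoking the external result of Thullen; the work has already been done in Theorem \ref{Thullen}, and the assumptions of Corollary \ref{Thullen2} are precisely those needed to put ourselves in its hypotheses via Thullen's classification. The only minor care is to note that $n=q=2$ is forced by Theorem \ref{rep-finite3} (equivalently, by comparing the ambient complex dimensions of the linearly equivalent Reinhardt domains), so Theorem \ref{Thullen} can be applied with matching $n=q$.
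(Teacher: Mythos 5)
Your proposal is correct and follows the paper's intended argument exactly: the paper's own justification is precisely to combine Thullen's classification (an automorphism of a bounded Reinhardt domain in $\CC^2$ not fixing the origin forces linear equivalence to the bidisc, ball, or a Thullen domain) with Theorem \ref{Thullen} applied at $p'=1$, which is what you do via the contrapositive, after noting boundedness from \eqref{bound}. The only superfluous remark is invoking Theorem \ref{rep-finite3} for $n=q=2$, since both $f$ and $g$ are assumed to have two indeterminates.
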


 \bigskip

\section{Free biholomorphic classification of   noncommutative domains}

 The main result of this  section
   shows that the  free biholomorphic classification of the noncommutative
 domains ${\bold
D}_f^m(\cH)\subset B(\cH)^n$ is the same as the classification
   of the corresponding noncommutative domain
  algebras $\cA_n({\bf D}_f^m)\simeq
 A({\bf D}_{f, \text{\rm rad}}^m)$.
 Combining this  result  with the Cartan type results   from Section 4,
we provide several  consequences concerning the free biholomorphic
classification.

 Let $f$ and $g$ be positive regular free holomorphic
functions with $n$ and $q$ indeterminates, respectively,  and let
$m,l\geq 1$. According to Theorem \ref{A-infty},  there is a completely
isometric isomorphism
$$
A({\bf D}_{f,\text{\rm rad}}^m)\ni \chi\mapsto \widetilde \chi:=\lim_{r\to
1} \chi(rW_1,\ldots, rW_n)\in \cA_n({\bf D}_f^m),
$$
where the limit is in the operator norm topology,  whose inverse is
the noncommutative Berezin  transform ${\bf B}$, i.e., $\chi(X)={\bf
B}_X[\widetilde \chi]$ for any  $X\in {\bf D}_{f,\text{\rm
rad}}^m(\cH)$.  If $\Phi: A({\bf D}_{f,\text{\rm rad}}^m)\to A({\bf
D}_{g,\text{\rm rad}}^l)$ is a unital   algebra  homomorphism, it
induces a unique unital homomorphism $\widehat \Phi:\cA_n({\bf
D}_f^m)\to \cA_q({\bf D}_g^l)$ such that the  diagram
\begin{equation*}
\begin{matrix}
\cA_n({\bf D}_f^m)& \mapright{\widehat\Phi}&
\cA_q({\bf D}_g^l)\\
 \mapdown{{\bf B}}& & \mapdown{{\bf B}}\\
A({\bf D}_{f,\text{\rm rad}}^m)&  \mapright{\Phi}& A({\bf
D}_{g,\text{\rm rad}}^l)
\end{matrix}
\end{equation*}
is commutative, i.e., $\Phi {\bf B}={\bf B}\widehat \Phi$, where
${\bf B}$ is the appropriate noncommutative Berezin transform on the
noncommutative domain algebra $\cA_n({\bf D}_f^m)$ or  $\cA_q({\bf
D}_g^l)$ (see  relation \eqref{BT}). The homomorphisms $\Phi$ and
$\widehat \Phi$ uniquely determine each other by the formulas:
\begin{equation*}
\begin{split}
[\Phi (\chi)](X)&={\bf B}_X[\widehat \Phi(\widetilde \chi)], \qquad
\chi\in A({\bf D}_{f,\text{\rm rad}}^m),\ X\in {\bf D}_{g,\text{\rm
rad}}^l(\cH), \quad \text{ and }\\
\widehat \Phi(\widetilde \chi)&=\widetilde{\Phi(\chi)}, \qquad
\widetilde \chi\in \cA_n({\bf D}_f^m).
\end{split}
\end{equation*}
We recall that   $Bih({\bf D}_f^m, {\bf D}_g^l)$  stands for
the set of all biholomorphic functions  from  ${\bf D}_f^m(\cH)$ onto ${\bf D}_g^l(\cH)$, i.e., all homeomorphisms
 $\varphi:{\bf D}_f^m(\cH)\to {\bf D}_g^l(\cH)$ with the property
 that $\varphi|_{{\bf D}_{f,\text{\rm
rad}}^m(\cH)}$ and $\varphi^{-1}|_{{\bf D}_{g,\text{\rm
rad}}^l(\cH)}$ are free holomorphic functions.
Let $\Phi$ be
a unital completely
isometric  isomorphism
 from the noncommutative domain algebra  $A({\bf D}_{f,\text{\rm
rad}}^m)$ onto  $A({\bf D}_{g,\text{\rm rad}}^l)$. Consider the
closed operator systems  in $B(F^2(H_n))$,
$$
\cS_f:=\overline{\text{\rm  span}} \{ W_\alpha^{(f)}
{W_\beta^{(f)}}^*;\ \alpha,\beta\in \FF_n^+\}\quad \text{ and }\quad
\cS_g:=\overline{\text{\rm  span}} \{ W_\alpha^{(g)}
{W_\beta^{(g)}}^*;\ \alpha,\beta\in \FF_q^+\},
$$
where $(W_1^{(f)},\ldots, W_n^{(f)})$ and $(W_1^{(g)},\ldots, W_q^{(g)})$ are the universal models of the noncommutative domains ${\bf D}_f^m$ and ${\bf D}_g^l$, respectively.
We say that $\Phi$ has   completely contractive hereditary extension
if  the induced isomorphism   $\widehat \Phi:\cA_n({\bf D}_f^m)\to
\cA_q({\bf D}_g^l)$ has the property that
 the
linear maps $\widehat \Phi_*:\cS_f\to \cS_g$ and $(\widehat \Phi^{-1})_*:\cS_g\to \cS_f$ defined by
$$
 \widehat \Phi_*\left(W_\alpha^{(f)} {W_\beta^{(f)}}^*\right):=
\widehat\Phi\left(W_\alpha^{(f)}\right)\widehat\Phi\left(W_\beta^{(f)}\right)^*,\qquad
\alpha, \beta\in \FF_n^+,
$$
and
$$
 (\widehat \Phi^{-1})_*\left(W_\alpha^{(g)} {W_\beta^{(g)}}^*\right):=
\widehat\Phi^{-1}\left(W_\alpha^{(g)}\right)
\widehat\Phi^{-1}\left(W_\beta^{(g)}\right)^*,\qquad \alpha,
\beta\in \FF_q^+,
$$
respectively, are completely contractive. Using an approximation
argument, one can easily show that $\widehat \Phi_* \circ (\widehat
\Phi^{-1})_*=id_{\cS_g}$ and $(\widehat \Phi^{-1})_*\circ \widehat
\Phi_*=id_{\cS_f}$. Consequently, the map $\widehat \Phi_*:\cS_f\to
\cS_g$ is a   unital completely isometric linear isomorphism between
operator systems, which extends the completely isometric isomorphism
$\widehat \Phi:\cA_n({\bf D}_f^m)\to \cA_q({\bf D}_g^l)$.

Now, we are ready to prove the main result of this section.  We show that the
 free biholomorphic classification of the domains ${\bold
D}_f^m(\cH)$ is the same as the classification, up to unital
completely isometric isomorphisms  having  completely contractive
hereditary extension, of the corresponding noncommutative domain
algebras $A({\bf D}_{f, \text{\rm rad}}^m)\simeq  \cA_n({\bf
D}_f^m)$.

 \begin{theorem}
 \label{auto-disk}  Let $f$ and $g$ be positive regular free
holomorphic functions with $n$ and $q$ indeterminates, respectively,  and let $m,l\geq 1$.
 Then the following statements are equivalent:
\begin{enumerate}
\item[(i)]
    $\Psi:A({\bf D}_{f,\text{\rm
rad}}^m)\to A({\bf D}_{g,\text{\rm rad}}^l)$ is a unital completely
isometric  isomorphism  with completely contractive hereditary
extension;
\item[(ii)]
 there is   $\varphi \in Bih( {\bf D}_g^l,{\bf D}_f^m)$  such that
 $$
 \Psi(\chi)=\chi\circ \varphi,\qquad \chi\in A({\bf D}_{f,\text{\rm
rad}}^m).
 $$
 \end{enumerate}
In this case, $\widehat \Psi(\widetilde\chi)=\cB_{\widetilde
\varphi}[\widetilde \chi]$, $\widetilde \chi\in \cA_n({\bf D}_f^m)$,
where $\cB_{\widetilde \varphi}$ is the noncommutative Berezin
transform at $\widetilde \varphi$.
 In the particular case when $m=l=1$,   any unital completely isometric  isomorphism  has   completely contractive hereditary extension.
\end{theorem}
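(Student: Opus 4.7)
The plan is to exhibit $\varphi$ and $\Psi$ as mutual Berezin transforms, exploiting the identification $A({\bf D}_{f,\text{\rm rad}}^m)\simeq\cA_n({\bf D}_f^m)$ from Theorem \ref{A-infty}. The implication (ii) $\Rightarrow$ (i) is straightforward: I would set $\Psi(\chi):=\chi\circ\varphi$, use Theorem \ref{more-prop} for well-definedness, and use the bijectivity of $\varphi$ on each $\cH$ to get complete isometry (the supremum over $Y\in {\bf D}_g^l(\cH)$ equals the supremum over $\varphi(Y)\in {\bf D}_f^m(\cH)$, with matrix norms handled identically). Theorem \ref{more-prop}(c) identifies $\widehat\Psi(W^{(f)}_\alpha)=\widetilde\varphi_\alpha$, so the hereditary extension sends $W^{(f)}_\alpha W^{(f)*}_\beta$ to $\widetilde\varphi_\alpha\widetilde\varphi_\beta^*=\cB_{\widetilde\varphi}[W^{(f)}_\alpha W^{(f)*}_\beta]$, and this Berezin transform is unital completely contractive on $\cS_f$ since $\widetilde\varphi\in {\bf D}_f^m(F^2(H_q))$.

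For the harder direction (i) $\Rightarrow$ (ii), I would set $T_i:=\widehat\Psi(W^{(f)}_i)\in\cA_q({\bf D}_g^l)$. The hereditary extension $\widehat\Psi_*:\cS_f\to\cS_g$ is unital completely contractive, hence completely positive. For each $r\in[0,1)$ the operator $(id-\Phi_{f,rW^{(f)}})^k(I)\in\cS_f$ is a norm-convergent positive element for $k=1,\ldots,m$, so applying $\widehat\Psi_*$ yields $(id-\Phi_{f,rT})^k(I)\geq 0$ in $\cS_g$. Passing $r\to 1^-$ in WOT gives $T=(T_1,\ldots,T_n)\in {\bf D}_f^m(F^2(H_q))$. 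Symmetrically, $T':=(\widehat{\Psi^{-1}}(W^{(g)}_j))_{j=1}^{q}$ lies in ${\bf D}_g^l(F^2(H_n))$. I would then define
$$
\varphi_i(Y):=\cB_Y(T_i),\qquad \psi_j(X):=\cB_X(T'_j),
$$
for $Y\in {\bf D}_g^l(\cH)$ and $X\in {\bf D}_f^m(\cH)$; by Theorem \ref{A-infty} these extend continuously from the radial parts. The unital complete positivity of $\cB_Y:\cS_g\to B(\cH)$, applied (via radial truncation) to $(id-\Phi_{f,T})^k(I)\geq 0$ and combined with multiplicativity $\cB_Y(T_\alpha)=\varphi_\alpha(Y)$, then gives $\varphi({\bf D}_g^l(\cH))\subseteq {\bf D}_f^m(\cH)$, and symmetrically for $\psi$.

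The multiplicativity of $\cB_Y$ on $\cA_q({\bf D}_g^l)$ also gives $[\Psi(\chi)](Y)=\cB_Y[\widehat\Psi(\widetilde\chi)]=\chi(\varphi(Y))$ for every noncommutative polynomial $\chi$; density of polynomials together with continuity of both sides in the operator norm propagates this to all $\chi\in A({\bf D}_{f,\text{\rm rad}}^m)$. Similarly $\Psi^{-1}(\eta)=\eta\circ\psi$. From $\Psi\Psi^{-1}=\Psi^{-1}\Psi=\text{\rm id}$ one concludes $\varphi\circ\psi=\text{\rm id}$ and $\psi\circ\varphi=\text{\rm id}$ on the radial parts (test against $\chi=Z_i$), and these upgrade to the full closed domains by continuity. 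Thus $\varphi\in Bih({\bf D}_g^l,{\bf D}_f^m)$, and the identity $\widehat\Psi(\widetilde\chi)=\cB_{\widetilde\varphi}[\widetilde\chi]$ is immediate from the construction and polynomial density.

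For the special case $m=l=1$, the defining inequality $\sum a_\alpha T_\alpha T_\alpha^*\leq I$ is equivalent to the row $[\sqrt{a_\alpha}T_\alpha]_{|\alpha|\geq 1}$ being a contraction, a condition fully determined by the completely bounded norm of $\widehat\Psi$ on finite truncations; hence $T\in {\bf D}_f^1(F^2(H_q))$ is automatic, and the hereditary extension then coincides with the unital completely positive Berezin transform $\cB_T$ on $\cS_f$, which is completely contractive. The main technical obstacle throughout is to control the WOT-limits in the infinite series defining $\Phi_{f,T}^k(I)$ while preserving positivity when transported through $\widehat\Psi_*$ and $\cB_Y$; I plan to sidestep this by always working first with the norm-convergent radial truncations $rT$ (on which the relevant series lie cleanly in $\cS_g$ and everything is norm-continuous) and only then letting $r\to 1^-$ on the resulting positive cones.
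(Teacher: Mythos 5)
Your proposal is correct and follows essentially the same route as the paper: you transport the positivity conditions $(id-\Phi_{f,rW^{(f)}})^k(I)\geq 0$ through the (completely positive) hereditary extension and then through the Berezin transform $\cB_Y$ on $\cS_g$, define $\varphi_i:={\bf B}[\widehat\Psi(W_i^{(f)})]$ with continuous extension via Theorem \ref{A-infty}, verify $\Psi(\chi)=\chi\circ\varphi$ by a density/limit argument, and prove the converse and the $m=l=1$ case exactly as the paper does, via Theorem \ref{more-prop}, the von Neumann inequality \eqref{vn1}, and the row-contraction characterization of ${\bf D}_f^1$. The only cosmetic differences are that you use polynomial density and testing against the coordinate functions $Z_i$ to get $\varphi\circ\psi=\psi\circ\varphi=\mathrm{id}$, where the paper works with the radial Fourier approximations of $\widehat\Psi^{-1}(W_i^{(g)})$; these are interchangeable.
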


\begin{proof} Let $\sum_{\alpha\in \FF_n^+, |\alpha|\geq 1} a_\alpha
X_\alpha$ be the representation of $f$.
   First we prove that $(i)\implies (ii)$.
Assume that $\Psi$  is a
 unital completely isometric     isomorphism.
 Then, the induced map   $\widehat
 \Psi:\cA_n({\bf D}_f^m)\to \cA_q({\bf D}_g^l)$ is a completely isometric
 isomorphism.
Denote
 \begin{equation}
 \label{var-tilde}
 \widetilde\varphi_i:=\widehat \Psi(W_i^{(f)})\in \cA_q({\bf D}_g^l),
\qquad i=1,\ldots, n,
 \end{equation}
 where $W^{(f)}:=(W_1^{(f)},\ldots, W_n^{(f)})\in {\bf
 D}_f^m(F^2(H_n))$ is the universal model of the noncommutative
 domain ${\bf D}_f^m$.
 Since $\Phi_{f,W^{(f)}}(I)\leq I$, we have
$\Phi_{f,rW^{(f)}}(I)=\sum_{k=1}^\infty \sum_{|\alpha|=k} a_\alpha
r^{|\alpha|} W_\alpha^{(f)} {W_\alpha^{(f)}}^*\leq
 I$ and $\Phi_{f,rW^{(f)}}(I)\in  \cS_f$  for any $r\in [0,1)$. Using relation \eqref{var-tilde}
 and  the fact that
  $\widehat
 \Psi $ is a  completely isometric homomorphism, we deduce that
 $0\leq \Phi_{f,r\widetilde \varphi}(I)\leq I$ for $r\in[0,1)$, which proves that $r\widetilde
 \varphi:=(r\widetilde
 \varphi_1,\ldots, r\widetilde
 \varphi_n)\in {\bf D}_f^1(F^2(H_n))$. Since ${\bf D}_f^1(F^2(H_n))$ is closed in
the operator norm topology, we deduce that $\widetilde\varphi\in {\bf
D}_f^1(F^2(H_n))$.

  We recall  (see Corollary 1.5 from \cite{Po-Berezin}) that
if $\psi$ is a positive linear map on $B(\cH)$ such that
$\psi(I)\leq I$ and $(id-\psi)^m(I)\geq 0$ for some $m\in \NN$, then
$$
0\leq (id-\psi)^m(I) \leq  (id-\psi)^{m-1}(I)\leq \cdots\leq
(id-\psi)(I)\leq I.
$$
Since ${\bf
 D}_{f}^m$ is a starlike domain, $(rW_1,\ldots, rW_n)\in {\bf D}_{f
 }^m(F^2(H_n))$, i.e.,  $(id-\Phi_{f,rW^{(f)}})^m(I)\geq 0$.
  Using the  above-mentioned result in our setting, we have
$$
 0\leq (id-\Phi_{f,r W^{(f)}})^s(I)=\sum_{p=0}^s (-1)^p
\left(\begin{matrix} s\\p\end{matrix}\right) \Phi_{f,r
W^{(f)}}^p(I)=I-\sum_{p=1}^s (-1)^{p+1} \left(\begin{matrix}
s\\p\end{matrix}\right) \Phi_{f,r W^{(f)}}^p(I)\leq I
$$
 for any  $ s =1,\ldots, m$, which is equivalent to
 \begin{equation}
\label{ineq}
 0\leq \sum_{p=1}^s (-1)^{p+1} \left(\begin{matrix}
s\\p\end{matrix}\right) \Phi_{f,r W^{(f)}}^p(I)\leq I, \qquad  s
=1,\ldots, m.
\end{equation}
Since
$$
\Phi_{f,rW^{(f)}}^p(I)=\sum_{k=1}^\infty \sum_{|\alpha|=k} a_\alpha
r^{|\alpha|} W_\alpha^{(f)}
\Phi_{f,rW^{(f)}}^{p-1}(I){W_\alpha^{(f)}}^*
$$
 and
$\|\Phi_{f,rW^{(f)}}^p(I)\|\leq 1$ for any $p\in \NN$, it is clear
that $\Phi_{f,rW^{(f)}}^p(I)\in \cS_f:=\overline{\text{\rm  span}}
\{ W_\alpha^{(f)} {W_\beta^{(f)}}^*;\ \alpha,\beta\in \FF_n^+\}$.
Assume now that $\Psi:A({\bf D}_{f,\text{\rm rad}}^m)\to A({\bf
D}_{g,\text{\rm rad}}^l)$ satisfies condition (i). Then, due to the
considerations above,  relation \eqref{ineq} holds and  implies
$$
\left\|\sum_{p=1}^s (-1)^{p+1} \left(\begin{matrix}
s\\p\end{matrix}\right) \Phi_{f,r W^{(f)}}^p(I)\right\|\leq 1.
$$
Using relation \eqref{var-tilde}
 and  the fact that  $\widehat \Psi_*:\cS_f\to \cS_g$ is a unital completely
contractive linear map, we deduce that
\begin{equation*}
 \left\|\sum_{p=1}^s (-1)^{p+1} \left(\begin{matrix}
s\\p\end{matrix}\right) \Phi_{f,r \widetilde
\varphi}^p(I)\right\|\leq 1, \qquad  s =1,\ldots, m.
\end{equation*}
Hence  $\sum_{p=1}^s (-1)^{p+1} \left(\begin{matrix}
s\\p\end{matrix}\right) \Phi_{f,r \widetilde \varphi}^p(I)\leq I$
and, consequently,
$$
(id-\Phi_{f,r\widetilde \varphi})^s(I)\geq 0, \qquad s =1,\ldots, m,
$$
which shows that $r\widetilde
 \varphi:=(r\widetilde
 \varphi_1,\ldots, r\widetilde
 \varphi_n)\in {\bf D}_f^m(F^2(H_n))$ for any $r\in [0,1)$. Since  ${\bf
 D}_f^m(F^2(H_n))$ is closed in the operator norm topology, we
 deduce that
 $\widetilde \varphi:=(\widetilde \varphi_1,\ldots,
\widetilde \varphi_n)$ is in the noncommutative domain ${\bf
D}_f^m(F^2(H_n))$.

Setting
$$\varphi_i(X):={\bf B}_X[\widetilde \varphi_i], \qquad X\in {\bf D}_{g,\text{\rm
rad}}^l(\cH), $$
 and due to Theorem \ref{A-infty},  we deduce that  the map
$\varphi(X):=(\varphi_1(X),\ldots,\varphi_n(X))$   is a free
holomorphic function on ${\bf D}_{g,\text{\rm rad}}^l(\cH)$ with
continuous extension  to ${\bf D}_g^l(\cH)$. This extension is also
denoted by $\varphi$. On the other hand, since,  for each  $s
=1,\ldots, m$, $(id-\Phi_{f,r\widetilde \varphi})^s(I)$ is  a
positive operator in $\cS_g$ and the noncommutative Berezin
transform $\cB_X$ on the operator system $\cS_g$ is positive, we
deduce that
$$
(id-\Phi_{f,r  \varphi(X)})^s(I)=\cB_X\left[(id-\Phi_{f,r\widetilde
\varphi})^s(I)\right]\geq 0
$$
for all $s =1,\ldots, m$. Consequently,  $r\varphi(X)\in {\bf
D}_f^m(\cH)$ for $r\in [0,1)$. Since ${\bf D}_f^m(\cH)$ is closed in
the operator norm topology, we deduce that $\varphi(X)\in {\bf
D}_f^m(\cH)$. Therefore, $\varphi$ takes
 values in ${\bf D}_{f}^m(\cH)$.
Note also  that due to the properties of the noncommutative Berezin
transform $\cB_X$ on the operator system $\cS_g$ (see relation
\eqref{BT}), we have
\begin{equation}
\label{fibx} \varphi_i(X)=\cB_X[\widetilde \varphi_i],\qquad X\in
{\bf D}_g^l(\cH).
\end{equation}
Similarly, we set
\begin{equation}
 \label{xi}\widetilde\xi_i:=\widehat \Psi^{-1}(W_i^{(g)})\in \cA_n({\bf D}_f^m),\qquad
 i=1,\ldots, q,
 \end{equation}
   and   deduce that $\widetilde \xi:=(\widetilde \xi_1,\ldots, \widetilde
\xi_q)$ is in
  ${\bf D}_g^l(F^2(H_n))$. Now, if we define
  \begin{equation*}
  \xi_i(X):={\bf B}_X[\widetilde \xi_i], \qquad X\in {\bf D}_{f,\text{\rm
rad}}^m(\cH), \ i=1,\ldots, q,
\end{equation*}
 then, using again   Theorem \ref{A-infty},  we obtain that the map
$\xi(X):=(\xi_1(X),\ldots,\xi_q(X))$  is a free holomorphic function
on ${\bf D}_{f,\text{\rm rad}}^m(\cH)$ with values in ${\bf
D}_{g}^l(\cH)$ and  continuous extension  to ${\bf D}_f^m(\cH)$,
which we also  denote by $\xi$.  Note  that
\begin{equation}
\label{xibx2} \xi_i(X)=\cB_X[\widetilde \xi_i],\qquad X\in {\bf
D}_f^m(\cH),
\end{equation}
where $\cB_X$ is the noncommutative Berezin transform on the
operator system $\cS_f$.
 Now,  due to Theorem \ref{A-infty}, each $\widetilde \xi_i\in
\cA_n({\bf D}_f^m)$, $i=1,\ldots, q$, has a unique Fourier
representation $\sum_{\alpha\in \FF_n^+} c_\alpha W_\alpha^{(f)}$
such that
 $$
 \widetilde \xi_i=\lim_{r\to 1} \sum_{k=0}^\infty
\sum_{|\alpha|=k} c_\alpha r^{|\alpha|} W_\alpha^{(f)},
$$
 where the
limit is in the operator norm topology. Hence, using the
continuity of $\widehat \Psi$ in the operator norm, and relations
\eqref{xi} and \eqref{var-tilde}, we obtain
\begin{equation*}
\begin{split}
W_i^{(g)}&=\widehat \Psi(\widetilde \xi_i)=\widehat
\Psi\left(\lim_{r\to 1} \sum_{k=0}^\infty \sum_{|\alpha|=k} c_\alpha
r^{|\alpha|} W_\alpha^{(f)}
\right)\\
&=\lim_{r\to 1} \sum_{k=0}^\infty \sum_{|\alpha|=k} c_\alpha
r^{|\alpha|} \widehat \Psi(W_\alpha^{(f)})= \lim_{r\to 1}
\sum_{k=0}^\infty \sum_{|\alpha|=k} c_\alpha r^{|\alpha|} \widetilde
\varphi_\alpha.
\end{split}
\end{equation*}
Consequently, using the  continuity in the operator norm  of  the
noncommutative Berezin transform  at $X\in {\bf D}_g^l(\cH)$ on the
operator system $\cS_g$, and  relations \eqref{fibx} and
\eqref{xibx2},  we have
\begin{equation*}
\begin{split}
X_i&={\cB}_X[W_i^{(g)}]={\cB}_X\left[\lim_{r\to 1}\sum_{k=0}^\infty
\sum_{|\alpha|=k} c_\alpha
r^{|\alpha|} \widetilde \varphi_\alpha\right]\\
&=\lim_{r\to 1} \sum_{k=0}^\infty \sum_{|\alpha|=k} c_\alpha
r^{|\alpha|}  {\cB}_X[\widetilde\varphi_\alpha]
=\lim_{r\to 1} \sum_{k=0}^\infty \sum_{|\alpha|=k} c_\alpha
r^{|\alpha|}   \varphi_\alpha(X)\\
&=\lim_{r\to 1} \cB_{\varphi(X)}\left[ \sum_{k=0}^\infty
\sum_{|\alpha|=k} c_\alpha r^{|\alpha|} W_\alpha^{(f)}\right]\\
&=\cB_{\varphi(X)}[\widetilde\xi_i]=\xi_i(\varphi(X))
\end{split}
\end{equation*}
for each $i=1,\ldots, q$, and any $X\in {\bf D}_g^l(\cH)$. Hence
$(\xi\circ \varphi)(X)=X$ for all  $X\in {\bf D}_g^l(\cH)$.
Similarly, one can prove that $(\varphi\circ \xi)(X)=X$ for  $X\in
{\bf D}_f^m(\cH)$. Therefore,
 $\varphi:{\bf D}_g^l(\cH)\to {\bf D}_f^m(\cH)$ is a homeomorphism  such that
  $\varphi$ and $\varphi^{-1}:=\xi$ are free holomorphic functions on   the noncommutative domains
  ${\bf D}_{g,\text{\rm
rad}}^l(\cH)$ and ${\bf D}_{f,\text{\rm
rad}}^m(\cH)$, respectively. This shows that $\varphi \in Bih( {\bf D}_g^l,{\bf D}_f^m)$.

Now let   $\widetilde \chi\in \cA_n({\bf D}_f^m)$  have  the Fourier
representation $\sum_{\alpha\in \FF_n^+} d_\alpha W_\alpha^{(f)}$.
Then
 $$
 \widetilde \chi=\lim_{r\to 1} \sum_{k=0}^\infty
\sum_{|\alpha|=k} d_\alpha r^{|\alpha|} W_\alpha^{(f)},
$$
 where the
limit is in the operator norm topology. According to Theorem
\ref{A-infty}, the map \begin{equation} \label{cibx}
\chi(X):=\cB_X[\widetilde \chi],\qquad X\in {\bf D}_f^m(\cH),
\end{equation}
 is in the domain algebra $A({\bf D}_{f,\text{\rm rad}}^m)$.
Due to the continuity of the homomorphism  $\widehat \Psi$ in the
operator norm and  using relation \eqref{var-tilde}, we have
$$
\widehat \Psi(\widetilde \chi)=\lim_{r\to 1} \sum_{k=0}^\infty
\sum_{|\alpha|=k} d_\alpha r^{|\alpha|} \widetilde \varphi_\alpha.
$$
Hence, using relations \eqref{fibx}, \eqref{cibx}, and applying  the
noncommutative Berezin transform  at $X\in {\bf D}_g^l(\cH)$ on the
operator system $\cS_g$ and using its continuity in norm, we get
\begin{equation*}
\begin{split}
[\Psi(\chi)](X)&={\cB}_X[\widehat \Psi(\widetilde \chi)]
=\lim_{r\to 1} \sum_{k=0}^\infty \sum_{|\alpha|=k} d_\alpha
r^{|\alpha|} {\cB}_X[\widetilde \varphi_\alpha]\\
 &=\lim_{r\to 1}
\sum_{k=0}^\infty \sum_{|\alpha|=k} d_\alpha r^{|\alpha|}
\varphi_\alpha(X)\\
&=\lim_{r\to 1}\cB_{\varphi(X)}\left[\sum_{k=0}^\infty
\sum_{|\alpha|=k} d_\alpha
r^{|\alpha|} W_\alpha^{(f)}\right]\\
&=\cB_{\varphi(X)}[\widetilde \chi] =(\chi\circ\varphi)(X)
\end{split}
\end{equation*}
for any $X\in {\bf D}_g^l(\cH)$, which completes the proof of   the
implication $(i)\implies (ii)$.

To prove that $(ii)\implies (i)$, assume that  $\varphi \in Bih( {\bf D}_g^l,{\bf D}_f^m)$ and define
 $
 \Psi(\chi):=\chi\circ \varphi$  for $ \chi\in A( {\bf D}_{f,\text{\rm rad}}^m)$.
 First note that, due to Theorem \ref{more-prop},
 $\chi\circ \varphi\in A( {\bf D}_{g,\text{\rm rad}}^l)$
  for all $\chi\in A( {\bf D}_{f, \text{\rm rad}}^m)$, and
   $\Psi$ is a
well-defined completely contractive homomorphism.
 A similar result can be deduced for  the map
$\Lambda(\chi):=\chi\circ\varphi^{-1}$, $\chi\in A( {\bf D}_{g, \text{\rm
rad}}^l)$. Now, note that, for any $\chi\in A( {\bf D}_{f, \text{\rm
rad}}^m)$ and $X\in {\bf D}_f^m(\cH)$, we have $(
\Lambda\circ\Psi)(\chi)=(\chi\circ \varphi)\circ \varphi^{-1}$ and
\begin{equation*}
\begin{split}
[(\chi\circ \varphi)\circ \varphi^{-1}](X)&=(\chi\circ
\varphi)(\varphi^{-1}(X))=\lim_{r\to 1}
\chi(r\varphi_1(\varphi^{-1}(X)),\ldots, r\varphi_n(\varphi^{-1}(X)))\\
&=\lim_{r\to1} \chi(rX_1,\ldots, rX_n)=\chi(X_1,\ldots, X_n),
\end{split}
\end{equation*}
where the convergence is in the operator norm topology.  Hence,
$\Lambda\circ \Psi=id$. Similarly, we can show that $\Psi\circ
\Lambda=id$. Since $\Psi$ and $\Lambda$ are completely contractive
homomorphisms, we deduce that   $\Psi$ is a completely isometric
isomorphism. Moreover, since $\varphi:=(
 \varphi_1,\ldots,
 \varphi_n)\in
 Bih( {\bf D}_g^l,{\bf D}_f^m)$,  we can use  Theorem \ref{A-infty} and   deduce  that $\widetilde
 \varphi:=(\widetilde
 \varphi_1,\ldots, \widetilde
 \varphi_n)\in {\bf D}_f^m(F^2(H_n))$ and $\widetilde
 \varphi_i\in \cA_q({\bf D}_g^l)$. Consequently, using   the noncommutative
 von Neumann inequality \eqref{vn1}, we deduce that
 the linear  map
$$
 \cS_f \ni W_\alpha^{(f)} {W_\beta^{(f)}}^*\mapsto
\widetilde\varphi_\alpha\widetilde\varphi_\beta^* \in
 \cS_g,\qquad \alpha, \beta\in \FF_n^+,
$$
is completely contractive.  A similar result can be deduced for
$\varphi^{-1}$. Therefore, $\Psi$ has   completely contractive
hereditary extension, and item (i) holds. Moreover, due to Theorem
\ref{more-prop}, we have $\widehat
\Psi(\widetilde\chi)=\cB_{\widetilde \varphi}[\widetilde \chi]$,
$\widetilde \chi\in \cA_n({\bf D}_f^m)$, where $\cB_{\widetilde
\varphi}$ is the noncommutative Berezin transform at $\widetilde
\varphi$.

Now, we consider  the particular  case when $m=l=1$. Let  $\widehat
 \Psi:\cA_n({\bf D}_f^1)\to \cA_q({\bf D}_g^1)$ be a completely isometric
 isomorphism. Then, we
showed at the beginning of this proof that  $\widetilde
 \varphi:=(\widetilde
 \varphi_1,\ldots, \widetilde
 \varphi_n)\in {\bf D}_f^1(F^2(H_n))$, where $\widetilde \varphi_i$
 are given by relation \eqref{var-tilde}. Similarly, we deduce that
  $\widetilde \xi:=(\widetilde \xi_1,\ldots, \widetilde
\xi_q)$ is in
  ${\bf D}_g^1(F^2(H_n))$, where $\widetilde \xi_i$ are given by relation \eqref{xi}.
As above, using   the noncommutative
 von Neumann inequality \eqref{vn1}, we deduce that
  $\Psi$  has   unital completely contractive hereditary extension,
  which completes the proof.
\end{proof}

\begin{corollary}\label{iso-dom} Let $f$ and $g$ be positive regular free
holomorphic functions with $n$ and $q$ indeterminates, respectively.
Then the noncommutative domains ${\bf D}_f^1(\cH)$ and  ${\bf
D}_g^1(\cH)$ are free biholomorphic equivalent if and only if   the
domain algebras $\cA_n({\bf D}_f^1)$  and $\cA_q({\bf D}_g^1)$ are
completely isometrically isomorphic.
\end{corollary}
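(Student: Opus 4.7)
The plan is that this corollary reduces directly to Theorem \ref{auto-disk} specialized to the case $m=l=1$, combined with the algebra-level identification $A({\bf D}_{f,\text{\rm rad}}^m)\simeq\cA_n({\bf D}_f^m)$ provided by Theorem \ref{A-infty}. The content lives almost entirely in the main theorem; what remains is a short bookkeeping argument plus the invocation of the automatic hereditary-extension clause at the end of Theorem \ref{auto-disk}.

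For the forward direction, I would start with $\varphi\in Bih({\bf D}_f^1,{\bf D}_g^1)$ and apply Theorem \ref{auto-disk} with the roles of $f$ and $g$ interchanged: the map $\chi\mapsto \chi\circ\varphi$ is then a unital completely isometric isomorphism $A({\bf D}_{g,\text{\rm rad}}^1)\to A({\bf D}_{f,\text{\rm rad}}^1)$ (composition is well-defined and norm-preserving by Theorem \ref{more-prop}, together with the fact that $\varphi^{-1}$ is free holomorphic). Conjugating by the completely isometric isomorphisms $\cA_q({\bf D}_g^1)\simeq A({\bf D}_{g,\text{\rm rad}}^1)$ and $\cA_n({\bf D}_f^1)\simeq A({\bf D}_{f,\text{\rm rad}}^1)$ from Theorem \ref{A-infty}, one obtains the desired completely isometric isomorphism $\cA_q({\bf D}_g^1)\to\cA_n({\bf D}_f^1)$.

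For the converse, suppose $\widehat\Phi:\cA_n({\bf D}_f^1)\to\cA_q({\bf D}_g^1)$ is a unital completely isometric isomorphism. Transfer it through Theorem \ref{A-infty} to a unital completely isometric isomorphism $\Psi:A({\bf D}_{f,\text{\rm rad}}^1)\to A({\bf D}_{g,\text{\rm rad}}^1)$ via $\Psi(\chi)(X)={\bf B}_X[\widehat\Phi(\widetilde\chi)]$. The key point — and the only place where $m=l=1$ matters — is the final clause of Theorem \ref{auto-disk}, which asserts that in this particular case any such $\Psi$ automatically admits a completely contractive hereditary extension to the operator systems $\cS_f$ and $\cS_g$. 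Hence the hypotheses of Theorem \ref{auto-disk}(i) are satisfied, and its conclusion produces $\varphi\in Bih({\bf D}_g^1,{\bf D}_f^1)$ with $\Psi(\chi)=\chi\circ\varphi$, which in particular establishes that ${\bf D}_f^1(\cH)$ and ${\bf D}_g^1(\cH)$ are free biholomorphic equivalent.

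There is no genuine obstacle beyond correctly citing Theorem \ref{auto-disk}; the subtlety one must not skip over is that in the hypothesis of the main theorem the requirement is a completely isometric isomorphism \emph{with} completely contractive hereditary extension, so the converse of this corollary would fail for general $m,l$ without extra information, and it is precisely the automatic-extension statement at the end of Theorem \ref{auto-disk} that rescues the statement when $m=l=1$.
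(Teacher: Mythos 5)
Your proposal is correct and follows essentially the same route the paper intends: Corollary \ref{iso-dom} is read off from Theorem \ref{auto-disk} (via the identification $A({\bf D}_{f,\text{\rm rad}}^1)\simeq\cA_n({\bf D}_f^1)$ of Theorem \ref{A-infty}), with the converse hinging exactly on the final clause of Theorem \ref{auto-disk} that for $m=l=1$ every unital completely isometric isomorphism automatically has a completely contractive hereditary extension. You correctly identified that clause as the crux, so nothing is missing.
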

Let us consider an example.
In \cite{ArL}, Arias and Latr\'emoli\`ere showed that if
$\omega:=X_1+X_2+ X_1X_2$ and $\xi:=X_1+X_2+ \frac{1}{2}(X_1X_2+
X_2X_1)$, then
  the noncommutative domain algebras $\cA_2({\bf D}_\omega^1)$ and $\cA_2({\bf D}_\xi^1)$
   are not completely isometrically   isomorphic.
Therefore, according to Corollary \ref{iso-dom}
 the noncommutative domains ${\bf D}_{\omega}^1(\cH)$ and ${\bf D}_{\xi}^1(\cH)$  are  not  free
biholomorphic equivalent.

Now,  using  Theorem \ref{rep-finite3},  Corollary \ref{iso-dom},
  and  Arias-Latr\'emoli\`ere
 characterization of the noncommutative disc algebra $\cA_n$ (see Theorem 4.7 from \cite{ArL}),
 we deduce   the following  classification result.

\begin{corollary}\label{biho-equi2} Let $g$ be  a positive regular free
holomorphic function with $q$  indeterminates. Then the
noncommutative domain ${\bf D}_g^1(\cH)$ is biholomorphic equivalent
to the unit ball $[B(\cH)^n]_1$ if and only if \ $q=n$ and $g=c_1
X_1+\cdots + c_n X_n$ for some $c_i>0$.
\end{corollary}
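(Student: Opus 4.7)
The plan is to use the three ingredients named right before the corollary: Theorem~\ref{rep-finite3} to pin down the number of variables, Corollary~\ref{iso-dom} to translate the biholomorphic equivalence into a completely isometric isomorphism of domain algebras, and finally the Arias--Latr\'emoli\`ere characterization of the noncommutative disc algebra $\cA_n$ from \cite{ArL} to identify $g$.

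First I would handle the forward direction. Suppose $F:{\bf D}_g^1(\cH)\to[B(\cH)^n]_1^{-}$ is a free biholomorphism. Set $p:=X_1+\cdots+X_n$ so that ${\bf D}_p^1(\cH)=[B(\cH)^n]_1^{-}$ and $\cA_n({\bf D}_p^1)=\cA_n$, the noncommutative disc algebra. Theorem~\ref{rep-finite3} immediately forces the number of indeterminates to agree, so $q=n$. Corollary~\ref{iso-dom} (itself a direct consequence of the main theorem~\ref{auto-disk}) then yields a completely isometric isomorphism $\cA_n({\bf D}_g^1)\simeq\cA_n({\bf D}_p^1)=\cA_n$. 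At this point Theorem~4.7 of \cite{ArL} applies: it characterizes which positive regular free holomorphic functions $g$ can produce a domain algebra completely isometrically isomorphic to $\cA_n$, and the conclusion is precisely that $g$ must be a linear positive combination $g=c_1X_1+\cdots+c_nX_n$ with $c_i>0$.

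For the converse, assume $g=c_1X_1+\cdots+c_nX_n$ with $c_i>0$ and define
\[
F:{\bf D}_g^1(\cH)\to [B(\cH)^n]_1^{-},\qquad F(X_1,\ldots,X_n):=(\sqrt{c_1}\,X_1,\ldots,\sqrt{c_n}\,X_n),
\]
with inverse $F^{-1}(Y_1,\ldots,Y_n)=(Y_1/\sqrt{c_1},\ldots,Y_n/\sqrt{c_n})$. Since $\Phi_{g,X}(I)=\sum_{i=1}^n c_iX_iX_i^*=\sum_{i=1}^n(\sqrt{c_i}X_i)(\sqrt{c_i}X_i)^*$, the condition $\Phi_{g,X}(I)\leq I$ is equivalent to $F(X)\in[B(\cH)^n]_1^{-}$, so $F$ is a homeomorphism between the two closed domains in the operator norm topology. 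Both $F$ and $F^{-1}$ are polynomial maps, hence free holomorphic on the respective radial domains, which shows $F\in Bih({\bf D}_g^1,{\bf D}_p^1)$.

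I expect no serious obstacle: the corollary is a packaging of results already proved or cited. The only subtle point is correctly invoking the Arias--Latr\'emoli\`ere theorem, since we need the hypothesis ``$\cA_n({\bf D}_g^1)$ completely isometrically isomorphic to $\cA_n$'' in the exact form they state it; Corollary~\ref{iso-dom} delivers this without any additional assumption about fixing the origin, so the application is direct.
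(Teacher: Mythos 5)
Your proposal is correct and follows essentially the same route as the paper: Theorem \ref{rep-finite3} gives $q=n$, Corollary \ref{iso-dom} converts the biholomorphic equivalence into a completely isometric isomorphism $\cA_n({\bf D}_g^1)\simeq\cA_n$, and the Arias--Latr\'emoli\`ere characterization (Theorem 4.7 of \cite{ArL}) identifies $g$ as a positive linear form, with the converse by rescaling. Your explicit rescaling map $F(X)=(\sqrt{c_1}X_1,\ldots,\sqrt{c_n}X_n)$ just spells out the step the paper dismisses as ``obvious by rescaling.''
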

\begin{proof}
Let $p:=X_1+\cdots + X_n$ and recall  that  ${\bf
D}_p^1(\cH)=[B(\cH)^n]_1$. Due to Theorem \ref{rep-finite3} and
Corollary \ref{iso-dom}, ${\bf D}_g^1(\cH)$ is free biholomorphic
equivalent to the unit ball $[B(\cH)^n]_1$  if and only if $q=n$ and
the domain algebras $\cA_n $  and $\cA_n({\bf D}_g^l)$ are
completely isometrically isomorphic. Applying Arias-Latr\'emoli\`ere
result, the result follows. The converse is obvious by rescaling.
\end{proof}

 \begin{corollary}
 \label{auto-disk2}  Let $f$ and $g$ be positive regular free
holomorphic functions with $n$ and $q$ indeterminates, respectively.
Let  $\Psi:A({\bf D}_{f,\text{\rm rad}}^m)\to A({\bf D}_{g,\text{\rm
rad}}^l)$ be a unital  algebra homomorphism. Then $\Psi$ is a unital
completely isometric  isomorphism  having   completely contractive
hereditary extension
  if and only if
   $\Psi$  is  a  continuous homeomorphism     such that the $n$-tuples of operators
 $(\hat \Psi(W_1^{(f)}),\ldots, \hat \Psi(W_n^{(f)}))$ and
  $(\hat\Psi^{-1}(W_1^{(g)}),\ldots, \hat \Psi^{-1}(W_q^{(g)}))$  are in
  ${\bf D}_f^m(F^2(H_n))$ and ${\bf D}_g^l(F^2(H_q))$, respectively.
\end{corollary}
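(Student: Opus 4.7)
The plan is to derive Corollary \ref{auto-disk2} directly from the main Theorem \ref{auto-disk}, converting the operator-theoretic hypothesis on $\hat\Psi$ into a biholomorphism $\varphi\in Bih({\bf D}_g^l,{\bf D}_f^m)$ that implements $\Psi$ by composition. The forward direction is immediate: if $\Psi$ is a unital completely isometric isomorphism with completely contractive hereditary extension, then Theorem \ref{auto-disk} produces $\varphi\in Bih({\bf D}_g^l,{\bf D}_f^m)$ with $\Psi(\chi)=\chi\circ\varphi$. Inspecting the proof of that theorem (equations \eqref{var-tilde} and \eqref{xi} together with the resulting inclusions), the $n$-tuple $\widetilde\varphi=(\hat\Psi(W_1^{(f)}),\dots,\hat\Psi(W_n^{(f)}))$ belongs to ${\bf D}_f^m$ and $\widetilde\xi=(\hat\Psi^{-1}(W_1^{(g)}),\dots,\hat\Psi^{-1}(W_q^{(g)}))$ belongs to ${\bf D}_g^l$; any completely isometric isomorphism is automatically a continuous homeomorphism.

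For the converse, assume $\Psi$ is a continuous algebra homeomorphism with the stated inclusions. Set $\widetilde\varphi_i:=\hat\Psi(W_i^{(f)})\in\cA_q({\bf D}_g^l)$ and $\widetilde\xi_j:=\hat\Psi^{-1}(W_j^{(g)})\in\cA_n({\bf D}_f^m)$. By Theorem \ref{A-infty}, the Berezin transforms
\[
\varphi_i(X):=\cB_X[\widetilde\varphi_i]\ (X\in {\bf D}_g^l(\cH)),\qquad
\xi_j(Y):=\cB_Y[\widetilde\xi_j]\ (Y\in {\bf D}_f^m(\cH))
\]
yield elements of $A({\bf D}_{g,\rm rad}^l)$ and $A({\bf D}_{f,\rm rad}^m)$, respectively. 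The key point is that the hypothesis $\widetilde\varphi\in{\bf D}_f^m$ together with complete positivity of $\cB_X$ on $\cS_g$ gives $(id-\Phi_{f,\varphi(X)})^s(I)=\cB_X[(id-\Phi_{f,\widetilde\varphi})^s(I)]\geq 0$ for $1\le s\le m$, so $\varphi$ takes values in ${\bf D}_f^m(\cH)$; symmetrically $\xi$ takes values in ${\bf D}_g^l(\cH)$.

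Next I would verify that $\varphi$ and $\xi$ are mutually inverse, repeating verbatim the Fourier-series computation from the proof of Theorem \ref{auto-disk}: writing $\widetilde\xi_i=\lim_{r\to 1}\sum c_\alpha r^{|\alpha|}W_\alpha^{(f)}$ in norm, applying $\hat\Psi$ (continuous, which follows from continuity of $\Psi$ and the fact that the Berezin transforms vertically in the commutative diagram are isomorphisms of Banach algebras) gives $W_i^{(g)}=\lim_{r\to 1}\sum c_\alpha r^{|\alpha|}\widetilde\varphi_\alpha$, and then applying $\cB_X$ for $X\in{\bf D}_g^l(\cH)$ and using continuity of $\cB_X$ on $\cS_g$ yields $X_i=\xi_i(\varphi(X))$. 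By symmetry $\varphi\circ\xi=id$ on ${\bf D}_f^m(\cH)$, so $\varphi\in Bih({\bf D}_g^l,{\bf D}_f^m)$ with $\varphi^{-1}=\xi$.

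Finally, the identical Fourier/Berezin calculation applied to a general $\chi\in A({\bf D}_{f,\rm rad}^m)$ with $\widetilde\chi=\lim_{r\to 1}\sum d_\alpha r^{|\alpha|}W_\alpha^{(f)}$ shows $[\Psi(\chi)](X)=\cB_X[\hat\Psi(\widetilde\chi)]=\lim_{r\to 1}\sum d_\alpha r^{|\alpha|}\varphi_\alpha(X)=(\chi\circ\varphi)(X)$ for every $X\in{\bf D}_g^l(\cH)$. Thus $\Psi$ is exactly the composition operator induced by $\varphi\in Bih({\bf D}_g^l,{\bf D}_f^m)$, and the implication $(ii)\Rightarrow(i)$ of Theorem \ref{auto-disk} delivers the conclusion that $\Psi$ is a unital completely isometric isomorphism with completely contractive hereditary extension. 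The main point to be careful about is that continuity of $\Psi$ (plus continuity of its inverse) suffices to justify all the norm-limit exchanges above; complete contractivity is not assumed on $\Psi$ but emerges as a consequence of Theorem \ref{auto-disk} once $\Psi$ is identified as a composition operator.
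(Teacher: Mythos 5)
Your proposal is correct, but your converse follows a genuinely different route from the paper's. The paper handles the converse in a few lines, entirely at the level of the von Neumann inequality: since $(\hat\Psi(W_1^{(f)}),\ldots,\hat\Psi(W_n^{(f)}))\in {\bf D}_f^m$, inequality \eqref{vn1} gives $\|[p_{ij}(\hat\Psi(W_1^{(f)}),\ldots,\hat\Psi(W_n^{(f)}))]_{k\times k}\|\leq \|[p_{ij}(W_1^{(f)},\ldots,W_n^{(f)})]_{k\times k}\|$ for every matrix of polynomials, so $\hat\Psi$ is completely contractive on the dense polynomial subalgebra and, by the assumed continuity, on all of $\cA_n({\bf D}_f^m)$; the symmetric argument for $\hat\Psi^{-1}$ makes $\hat\Psi$ a complete isometry, and one more application of \eqref{vn1} gives the completely contractive hereditary extension directly. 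You instead re-run the $(i)\Rightarrow(ii)$ machinery of Theorem \ref{auto-disk}, noting that the hypothesis $\widetilde\varphi\in{\bf D}_f^m$ can replace the hereditary-extension assumption at exactly the point where it was used there, reconstruct the symbol $\varphi\in Bih({\bf D}_g^l,{\bf D}_f^m)$ with $\Psi(\chi)=\chi\circ\varphi$, and then invoke $(ii)\Rightarrow(i)$ of that theorem. Both routes are valid: the paper's is shorter and avoids any analysis of the symbol, while yours is longer but exhibits the biholomorphism explicitly and makes transparent that the corollary's operator-theoretic hypothesis is an exact substitute for the hereditary-extension hypothesis. One point to tighten: the identity $(id-\Phi_{f,\varphi(X)})^s(I)=\cB_X[(id-\Phi_{f,\widetilde\varphi})^s(I)]$ should be stated with the radial regularization $r\widetilde\varphi$, $r\in[0,1)$, as in the paper's proof of Theorem \ref{auto-disk}, because only then do the series defining $\Phi_{f,r\widetilde\varphi}^p(I)$ converge in norm and yield elements of the operator system $\cS_g$ on which $\cB_X$ is defined; one then lets $r\to 1$ using starlikeness and norm-closedness of ${\bf D}_f^m(\cH)$. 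It is also worth recording explicitly that $\Psi^{-1}$, being the inverse of a bijective unital homomorphism, is itself a unital homomorphism, since your computation uses the multiplicativity and continuity of $\hat\Psi^{-1}$.
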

\begin{proof} The direct implication follows from the proof of Theorem \ref{auto-disk}.
 Conversely, assume
that $\Psi$ is  a  continuous homeomorphism  such
   that $(\hat \Psi(W_1^{(f)}),\ldots,
\hat \Psi(W_n^{(f)}))$  is  in ${\bf D}_f^m(F^2(H_n))$. Due to the
noncommutative von Neumann inequality  (see  \eqref{vn1}), we have
$$
\|[\hat\Psi(p_{ij}(W_1^{(f)},\ldots, W_n^{(f)}))]_{k\times k}\|=
\|[p_{ij}(\hat\Psi(W_1^{(f)}),\ldots,\hat\Psi( W_n^{(f)})]_{k\times k}\| \leq
\|[ p_{ij}(W_1^{(f)},\ldots, W_n^{(f)})]_{k\times k}\|
$$
for any operator matrix  $[p_{ij}(W_1^{(f)},\ldots,
W_n^{(f)})]_{k\times k}\in\cA_n({\bf D}_f^m)\otimes M_{k\times k}$
and $k\geq 1$. Since $\hat \Psi$ is continuous on $\cA_n({\bf
D}_f^m)$, which
is the norm closed algebra generated by $W_1^{(f)},\ldots,
W_n^{(f)}$ and the identity, we deduce that $\hat\Psi:\cA_n({\bf
D}_f^m )\to \cA_q({\bf D}_g^l )$ is a unital completely contractive
homomorphism. Similarly, if $(\hat\Psi^{-1}(W_1^{(g)}),\ldots, \hat
\Psi^{-1}(W_q^{(g)}))$ is in ${\bf D}_g^l(F^2(H_q))$,
   we can prove that $\hat \Psi^{-1}$ is a unital
 completely contractive homomorphism. Therefore, $\hat\Psi$ is a complete
 isometry. Moreover, using   the noncommutative
 von Neumann inequality \eqref{vn1}, we deduce that $\Psi$ has   completely
contractive hereditary extension, and  complete the proof.
\end{proof}

We denote by  $Aut({\bf D}_f^m):=Bih({\bf D}_f^m, {\bf D}_f^m)$
the  automorphism group of $ {\bf D}_f^m(\cH)$.  The group of
all unital completely isometric   automorphisms
 of the noncommutative domain algebra  $A({\bf D}_{f,\text{\rm
rad}}^m)$ is denoted
 by $Aut_{ci}(A({\bf D}_{f,\text{\rm rad}}^m))$.   We also use the notation $Aut_{ci}^*(A({\bf D}_{f,\text{\rm rad}}^m))$
 for the set of all automorphisms $\Psi\in Aut_{ci}(A({\bf D}_{f,\text{\rm rad}}^m))$
  having   completely contractive hereditary extension.

 \begin{corollary}
 \label{auto-disk3}  Let $f$  be  a positive regular free
holomorphic function with $n$ indeterminates, and let $m\geq 1$.
 Then the following statements are equivalent:
\begin{enumerate}
\item[(i)]
    $\Psi\in  Aut_{ci}^*(A({\bf D}_{f,\text{\rm rad}}^m))$;
\item[(ii)]
 there is   $\varphi \in Aut({\bf D}_f^m)$  such that
 $$
 \Psi(\chi)=\chi\circ \varphi,\quad \chi\in A({\bf D}_{f,\text{\rm
rad}}^m) .
 $$
 \end{enumerate}
 Consequently,  $Aut_{ci}^*(A({\bf D}_{f,\text{\rm rad}}^m))$ is a subgroup
  of $Aut_{ci}(A({\bf D}_{f,\text{\rm rad}}^m))$ and
 $$
 Aut_{ci}^*(A({\bf D}_{f,\text{\rm rad}}^m))\simeq Aut({\bf D}_f^m).$$
 In the particular case when $m=1$,  $$Aut_{ci}^*(A({\bf D}_{f,\text{\rm rad}}^1))=Aut_{ci}(A({\bf D}_{f,\text{\rm rad}}^1))\simeq Aut({\bf D}_f^1).$$
\end{corollary}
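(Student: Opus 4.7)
The plan is to derive this corollary as a direct specialization of Theorem \ref{auto-disk} to the case $g = f$, $l = m$ (which forces $q = n$). Under this specialization, $Bih({\bf D}_f^m, {\bf D}_f^m)$ is exactly $Aut({\bf D}_f^m)$, and the equivalence $(i)\Leftrightarrow(ii)$ of the corollary is then an immediate transcription of the equivalence in Theorem \ref{auto-disk}: a unital completely isometric automorphism $\Psi$ of $A({\bf D}_{f,\text{\rm rad}}^m)$ with completely contractive hereditary extension corresponds precisely to some $\varphi\in Aut({\bf D}_f^m)$ via $\Psi(\chi)=\chi\circ\varphi$.

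Next, to establish the group isomorphism $Aut_{ci}^*(A({\bf D}_{f,\text{\rm rad}}^m))\simeq Aut({\bf D}_f^m)$, I would introduce the map $\Lambda\colon Aut({\bf D}_f^m)\to Aut_{ci}^*(A({\bf D}_{f,\text{\rm rad}}^m))$ defined by $\Lambda(\varphi)(\chi):=\chi\circ\varphi^{-1}$. The already-established equivalence guarantees that $\Lambda$ is a bijection. For the homomorphism property, given $\varphi,\psi\in Aut({\bf D}_f^m)$ and $\chi\in A({\bf D}_{f,\text{\rm rad}}^m)$, Theorem \ref{more-prop} ensures that iterated composition is associative and well-defined, so
\begin{equation*}
[\Lambda(\varphi)\Lambda(\psi)](\chi)=\Lambda(\varphi)(\chi\circ\psi^{-1})=(\chi\circ\psi^{-1})\circ\varphi^{-1}=\chi\circ(\varphi\circ\psi)^{-1}=\Lambda(\varphi\circ\psi)(\chi),
\end{equation*}
which gives the homomorphism identity $\Lambda(\varphi\circ\psi)=\Lambda(\varphi)\,\Lambda(\psi)$. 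In particular, since $\Lambda$ maps a group bijectively into $Aut_{ci}(A({\bf D}_{f,\text{\rm rad}}^m))$ and respects composition, $Aut_{ci}^*(A({\bf D}_{f,\text{\rm rad}}^m))$ is closed under composition and inversion, hence is a subgroup.

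Finally, the specialization to $m=1$ is immediate from the last assertion of Theorem \ref{auto-disk}, which states that in this setting every unital completely isometric isomorphism automatically has completely contractive hereditary extension. Consequently the distinguished subclass collapses, giving $Aut_{ci}^*(A({\bf D}_{f,\text{\rm rad}}^1))=Aut_{ci}(A({\bf D}_{f,\text{\rm rad}}^1))$, and both agree with $Aut({\bf D}_f^1)$ via $\Lambda$. There is no substantive obstacle in this argument — the corollary is essentially a bookkeeping consequence of Theorem \ref{auto-disk}; the only subtlety is to use $\varphi^{-1}$ (rather than $\varphi$) in the definition of $\Lambda$ so that composition of automorphisms on the domain translates into composition (not anti-composition) of automorphisms on the algebra.
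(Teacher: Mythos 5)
Your proposal is correct and matches the paper's intent: the corollary is stated there without proof, as an immediate specialization of Theorem \ref{auto-disk} to $g=f$, $l=m$, which is exactly what you do. Your extra bookkeeping (using $\varphi^{-1}$ so that $\Lambda$ is a homomorphism rather than an anti-homomorphism, and deducing the subgroup property and the $m=1$ collapse from the last assertion of Theorem \ref{auto-disk}) is accurate and fills in precisely the routine details the paper leaves implicit.
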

We mention that in the particular case when $m=1$ and $f=X_1+\cdots
+X_n$, Corollary \ref{auto-disk3} was obtained in
\cite{Po-automorphism} using different methods. In addition, we used
the theory of characteristic functions for row contractions (see
\cite{Po-charact})
   to determine the group
$Aut[B(\cH)^n]_1)$ of all free holomorphic automorphisms of
$[B(\cH)^n]_1$. It was shown that $Aut([B(\cH)^n]_1)\simeq
Aut(\BB_n)$, the Moebius group of the open unit ball
$\BB_n:=\{\lambda\in \CC^n:\ \|\lambda\|_2<1\}$.

          If  $\Psi:A({\bf D}_{f,\text{\rm
rad}}^m)\to A({\bf D}_{g,\text{\rm rad}}^l)$ is  a unital completely
isometric  isomorphism  having completely contractive hereditary
extension, then due to Theorem \ref{auto-disk},
$\Psi(\chi)=\chi\circ \varphi$, $\chi\in A({\bf D}_{f,\text{\rm
rad}}^m)$, for some $\varphi \in Bih( {\bf D}_g^l,{\bf D}_f^m)$. In
this case, we call $\varphi$ the symbol of $\Psi$. Using   Theorem
\ref{Cartan3}
 and Theorem \ref{auto-disk}, we  can deduce  the
following result.

\begin{theorem}\label{A-complete}  Let $f$ and $g$ be positive regular free
holomorphic functions with $n$ and $q$ indeterminates, respectively.
 A map $\Psi:A({\bf D}_{f,\text{\rm
rad}}^m)\to A({\bf D}_{g,\text{\rm rad}}^l)$ is  a unital completely
isometric  isomorphism  having completely contractive hereditary
extension and such that  its symbol $\varphi$ fixes the origin  if
and only if  $n=q$ and  $\Psi$ is given by
 $$
 \Psi(\chi)= \chi\circ \varphi,\qquad \chi\in A({\bf D}_{f,\text{\rm
rad}}^m),
 $$
 for some $\varphi \in Bih( {\bf D}_g^l,{\bf D}_f^m)$ of the form
\begin{equation*}
 \varphi(X_1,\ldots X_n)=[X_1,\ldots, X_n]U , \qquad (X_1,\ldots, X_n)\in
 {\bf D}_{g,\text{\rm
rad}}^l(\cH),
\end{equation*}
where    $U$  is an invertible operator on $\CC^n$  such that
$$
[W_1^{(g)},\ldots, W_n^{(g)}]U\in {\bf D}_f^m(F^2(H_n)) \quad \text{ and } \quad
[W_1^{(f)},\ldots, W_n^{(f)}]U^{-1}\in {\bf D}_g^l(F^2(H_n)).
$$
In  this case,  we have
$$
[\widehat \Psi(W_1^{(f)}),\ldots,\widehat \Psi(W_n^{(f)})]= \widetilde \varphi=[W_1^{(g)},\ldots, W_n^{(g)}]U .
$$
\end{theorem}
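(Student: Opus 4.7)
The plan is to derive this theorem as a direct combination of Theorem \ref{auto-disk} (the biholomorphic/algebra correspondence) with Theorem \ref{Cartan3} (the noncommutative Cartan-type classification of origin-fixing biholomorphisms). Both directions are essentially bookkeeping once the hypothesis $\varphi(0)=0$ is recognized as the bridge between the two results.

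For the forward implication, first I would apply Theorem \ref{auto-disk} to $\Psi$: since $\Psi$ is a unital completely isometric isomorphism with completely contractive hereditary extension, there exists $\varphi \in Bih({\bf D}_g^l,{\bf D}_f^m)$ with $\Psi(\chi)=\chi\circ\varphi$ for all $\chi\in A({\bf D}_{f,\text{\rm rad}}^m)$. The assumption that the symbol fixes the origin gives $\varphi(0)=0$, so $\varphi \in Bih_0({\bf D}_g^l,{\bf D}_f^m)$. Theorem \ref{Cartan3}, applied to $\varphi:{\bf D}_g^l(\cH)\to {\bf D}_f^m(\cH)$, then forces $n=q$ and yields an invertible $U\in B(\CC^n)$ with $\varphi(X_1,\ldots,X_n)=[X_1,\ldots,X_n]U$, together with the two required inclusions $[W_1^{(g)},\ldots,W_n^{(g)}]U\in {\bf D}_f^m(F^2(H_n))$ and $[W_1^{(f)},\ldots,W_n^{(f)}]U^{-1}\in {\bf D}_g^l(F^2(H_n))$.

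For the converse, given $U$ satisfying those inclusions, Theorem \ref{Cartan3} ensures that $\varphi(X):=[X_1,\ldots,X_n]U$ belongs to $Bih_0({\bf D}_g^l,{\bf D}_f^m)$. Theorem \ref{auto-disk} then guarantees that the map $\Psi$ defined by $\Psi(\chi):=\chi\circ\varphi$ is a unital completely isometric isomorphism with completely contractive hereditary extension, and its symbol $\varphi$ trivially fixes the origin.

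It remains to verify the formula for $[\widehat\Psi(W_1^{(f)}),\ldots,\widehat\Psi(W_n^{(f)})]$. From the proof of Theorem \ref{auto-disk} we have $\widehat\Psi(W_i^{(f)})=\widetilde{\varphi_i}$, the model boundary function of $\varphi_i$. Since $\varphi_i(X)=\sum_{k=1}^n a_{ki}X_k$ is a homogeneous polynomial of degree one in the generators, its continuous extension to the closed domain ${\bf D}_g^l(\cH)$ is itself, and Theorem \ref{A-infty} identifies $\widetilde{\varphi_i}=\sum_{k=1}^n a_{ki}W_k^{(g)}$, giving $[\widetilde{\varphi_1},\ldots,\widetilde{\varphi_n}]=[W_1^{(g)},\ldots,W_n^{(g)}]U$. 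The main obstacle is negligible here: both Theorem \ref{auto-disk} and Theorem \ref{Cartan3} have already absorbed the hard work (Berezin transform analysis and the nilpotent Cartan argument, respectively), so the present theorem reduces to matching their hypotheses and conclusions.
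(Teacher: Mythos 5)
Your proposal is correct and follows exactly the route the paper intends: the paper derives Theorem \ref{A-complete} precisely by combining Theorem \ref{auto-disk} (to produce the symbol $\varphi\in Bih({\bf D}_g^l,{\bf D}_f^m)$ and, conversely, the isomorphism from a symbol) with Theorem \ref{Cartan3} (to force $n=q$ and the linear form $[X_1,\ldots,X_n]U$ once $\varphi(0)=0$). Your closing verification that $\widehat\Psi(W_i^{(f)})=\widetilde\varphi_i=\sum_k a_{ki}W_k^{(g)}$ is also consistent with the identities established in the proof of Theorem \ref{auto-disk}.
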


  We mention that, in the particular case when $m=l=1$, Arias and Latr\' emoli\` ere proved
  (see  Theorem 3.18 from \cite{ArL}) that if  there is an completely
  isometric isomorphism between two noncommutative domain algebras
   $\cA_n({\bf D}_f^1)$ and  $\cA_n({\bf D}_g^1)$ whose dual map fixes the origin,
   then  the algebras are related by  a linear  relation of  their generators.
Our Theorem \ref{A-complete} implies  and strengthens their result
and also provides a converse.

 \bigskip

\section{Isomorphisms of noncommutative Hardy algebras}

In this section we  characterize the   unitarily implemented
  isomorphisms of noncommutative   Hardy
algebras associated with noncommutative domains. Similar results are
deduced for noncommutative domain algebras.

Let $f$
and $g$ be positive regular free holomorphic functions with $n$ and
$q$ indeterminates, respectively,  and let $m,l\geq 1$.
   According to Theorem \ref{f-infty},  there is a unital
completely isometric isomorphism
$$
H^\infty({\bf D}_{f,\text{\rm rad}}^m)\ni \chi\mapsto \widetilde
\chi:=\text{\rm SOT-}\lim_{r\to 1} \chi(rW_1,\ldots, rW_n)\in
F_n^\infty({\bf D}_f^m),
$$
where the limit is in the strong operator   topology,  whose inverse
is the noncommutative Berezin  transform ${\bf B}$, i.e.,
$\chi(X)={\bf B}_X[\widetilde \chi]$ for  $X\in {\bf D}_{f,\text{\rm
rad}}^m(\cH)$. If $\Psi: H^\infty({\bf D}_{f,\text{\rm rad}}^m)\to
H^\infty({\bf D}_{g,\text{\rm rad}}^l)$ is a unital homomorphism, it
induces a unique unital homomorphism $\widehat \Psi:F_n^\infty({\bf
D}_f^m)\to F_q^\infty({\bf D}_g^l)$ such that the  diagram
\begin{equation*}
\begin{matrix}
F_n^\infty({\bf D}_f^m)& \mapright{\widehat\Psi}&
F_q^\infty({\bf D}_g^l)\\
 \mapdown{{\bf B}}& & \mapdown{{\bf B}}\\
H^\infty({\bf D}_{f,\text{\rm rad}}^m)&  \mapright{\Psi}&
H^\infty({\bf D}_{g,\text{\rm rad}}^l)
\end{matrix}
\end{equation*}
is commutative, i.e., $\Psi {\bf B}={\bf B}\widehat \Psi$, where
${\bf B}$ is the appropriate noncommutative Berezin transform on the
Hardy space $F^\infty_n({\bf D}_f^m)$ or  $F^\infty_q({\bf D}_g^l)$.
The homomorphisms $\Psi$ and $\widehat \Psi$ uniquely determine each
other by the formulas:
\begin{equation*}
\begin{split}
[\Psi (\chi)](X)&={\bf B}_X[\widehat \Psi(\widetilde \chi)], \qquad  \chi\in
H^\infty({\bf D}_{f,\text{\rm rad}}^m),\ X\in {\bf D}_{g,\text{\rm
rad}}^l(\cH), \quad \text{ and }\\
\widehat \Psi(\widetilde \chi)&=\widetilde{\Psi(\chi)}, \qquad \tilde \chi\in
F_n^\infty({\bf D}_f^m).
\end{split}
\end{equation*}
 If $\widehat \Psi$ is  is a SOT-continuous
homomorphism, we say that $\Psi$  has the same property.

A bijective map $F:{\bf D}_{f,\text{\rm rad}}^m(\cH)\to {\bf D}_{g,
\text{\rm rad}}^l(\cH)$ is called free biholomorphic function  if $F
$ and $F^{-1} $ are free holomorphic functions on ${\bf
D}_{f,\text{\rm rad}}^m(\cH)$ and ${\bf D}_{g,\text{\rm
rad}}^l(\cH)$, respectively. We denote by $Bih({\bf D}_{g,\text{\rm
rad}}^l, {\bf D}_{f,\text{\rm
rad}}^m)$ the set of all
 free biholomorphic functions  from  ${\bf D}_{g, \text{\rm
rad}}^l(\cH)$ onto ${\bf D}_{f, \text{\rm rad}}^m(\cH)$. Note the
difference between  $Bih({\bf D}_{g,\text{\rm rad}}^l, {\bf
D}_{f,\text{\rm rad}}^m)$  and $Bih({\bf D}_{g}^l, {\bf D}_{f}^m)$.

 \begin{theorem}
 \label{auto-hardy3}   Let $f$ and $g$ be positive regular free
holomorphic functions with $n$ and $q$ indeterminates, respectively.
If  $\psi\in Bih({\bf D}_{g,\text{\rm rad}}^l, {\bf D}_{f,\text{\rm
rad}}^m)$, then the
  map
  $$
 \Psi(\chi):=\chi\circ \psi,\qquad \chi\in H^\infty({\bf D}_{f,\text{\rm
rad}}^m),
 $$
 is a
   completely isometric isomorphism  from the  Hardy algebra
 $H^\infty({\bf D}_{f,\text{\rm
rad}}^m)$ onto $H^\infty({\bf D}_{g,\text{\rm
rad}}^l)$. If, in addition, the model boundary function $\tilde \psi$ is a pure $n$-tuple, then
$\Psi$ is sequentially  SOT-continuous.

In particular,  if $\psi\in  Bih({\bf D}_{g }^l, {\bf D}_{f, }^m)$ and $\psi(0)=0$,
then $\Psi$ is a
   completely isometric isomorphism and has the form mentioned in Theorem \ref{A-complete}.
\end{theorem}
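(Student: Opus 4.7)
The plan is to derive this theorem from the composition results of Section~3, the SOT-continuity properties of the noncommutative Berezin transform, and (for the particular case) the Cartan type Theorem~\ref{Cartan3}.

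First, I would verify that $\Psi$ is a well-defined completely contractive unital homomorphism. Since $\psi\in Bih({\bf D}_{g,\text{\rm rad}}^l,{\bf D}_{f,\text{\rm rad}}^m)$, the map $\psi:{\bf D}_{g,\text{\rm rad}}^l(\cH)\to B(\cH)^n$ is free holomorphic with $\text{range}\,\psi\subseteq {\bf D}_{f,\text{\rm rad}}^m(\cH)$. Applying Theorem~\ref{compo} to any bounded $\chi\in H^\infty({\bf D}_{f,\text{\rm rad}}^m)$ yields that $\chi\circ\psi$ is a bounded free holomorphic function on ${\bf D}_{g,\text{\rm rad}}^l$ with $\|\chi\circ\psi\|_\infty\leq\|\chi\|_\infty$. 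Complete contractivity follows by applying the same argument to matrix-valued tuples $[\chi_{ij}]\in M_p(H^\infty({\bf D}_{f,\text{\rm rad}}^m))$: substitution by $\psi(X)$ takes the supremum defining $\|[\chi_{ij}]\|_p$ over ${\bf D}_{f,\text{\rm rad}}^m$ and restricts it to the image of ${\bf D}_{g,\text{\rm rad}}^l$. Next, I would apply exactly the same reasoning to $\psi^{-1}\in Bih({\bf D}_{f,\text{\rm rad}}^m,{\bf D}_{g,\text{\rm rad}}^l)$ to produce a completely contractive unital homomorphism $\Lambda(\chi'):=\chi'\circ\psi^{-1}$ in the opposite direction. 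The identities $\psi\circ\psi^{-1}=\text{id}$ and $\psi^{-1}\circ\psi=\text{id}$ on the respective radial domains force $\Psi\circ\Lambda=\text{id}$ and $\Lambda\circ\Psi=\text{id}$, so $\Psi$ is a unital completely isometric isomorphism.

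For sequential SOT-continuity under the purity assumption on $\widetilde\psi$, I would use Theorem~\ref{more-prop2} to identify the induced homomorphism $\widehat\Psi:F_n^\infty({\bf D}_f^m)\to F_q^\infty({\bf D}_g^l)$ with the Berezin transform at $\widetilde\psi$: concretely, $\widehat\Psi(\widetilde\chi)=\widetilde{\Psi(\chi)}=\cB_{\widetilde\psi}[\widetilde\chi]$ for $\widetilde\chi\in F_n^\infty({\bf D}_f^m)$. Purity of $\widetilde\psi$ makes it c.n.c., so by property (ii) recorded after~\eqref{Be-transf} the transform $\cB_{\widetilde\psi}$ is SOT-continuous on bounded subsets of $F_n^\infty({\bf D}_f^m)$. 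Since any SOT-convergent sequence is automatically norm-bounded by the uniform boundedness principle, $\widehat\Psi$, and hence $\Psi$, is sequentially SOT-continuous.

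For the particular case $\psi\in Bih({\bf D}_g^l,{\bf D}_f^m)$ with $\psi(0)=0$, I would invoke Theorem~\ref{Cartan3} to conclude that $n=q$ and $\psi(X)=[X_1,\ldots,X_n]U$ for an invertible $U\in B(\CC^n)$ satisfying $[W_1^{(g)},\ldots,W_n^{(g)}]U\in{\bf D}_f^m(F^2(H_n))$ and $[W_1^{(f)},\ldots,W_n^{(f)}]U^{-1}\in{\bf D}_g^l(F^2(H_n))$. Linearity together with the starlike property gives $\psi(r{\bf D}_g^l)=r{\bf D}_f^m$ for every $r\in[0,1)$, so $\psi$ restricts to an element of $Bih({\bf D}_{g,\text{\rm rad}}^l,{\bf D}_{f,\text{\rm rad}}^m)$; the first part of the theorem then applies and $\Psi$ has precisely the form of Theorem~\ref{A-complete}. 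The main technical point I anticipate is the sequential SOT-continuity step: the argument rests entirely on the fact that purity of $\widetilde\psi$ yields c.n.c.-ness, which in turn triggers the SOT-continuity of $\cB_{\widetilde\psi}$ on bounded sets; without this hypothesis the composition of bounded free holomorphic functions need not preserve SOT limits and the stated continuity can fail.
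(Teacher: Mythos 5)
Your proposal is correct and follows essentially the same route as the paper: Theorem \ref{compo} plus the matrix-level argument for the completely contractive homomorphisms in both directions, the pointwise identities $\psi\circ\psi^{-1}=\psi^{-1}\circ\psi=\text{\rm id}$ to get a completely isometric isomorphism, Theorem \ref{more-prop2} identifying $\widehat\Psi$ with $\cB_{\widetilde\psi}$ for the SOT-continuity, and Theorem \ref{Cartan3} for the case $\psi(0)=0$. The only deviations are cosmetic: you justify sequential SOT-continuity via c.n.c.-ness and the uniform boundedness principle where the paper uses the explicit Berezin-kernel formula, and in the last part you pass from ${\bf D}_g^l$ to the radial domains directly by linearity rather than via Lemma \ref{range}; both substitutions are valid.
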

\begin{proof} Using Theorem \ref{compo}, we deduce that $\chi\circ
\psi\in H^\infty({\bf D}_{g,\text{\rm rad}}^l)$ and $\|\chi\circ
\psi\|_\infty\leq \|\chi\|_\infty$. Moreover, passing to matrices,
we can  similarly  show that $\Psi$ is a completely contractive
homomorphism. In the same manner, one can prove that  the map
$\Gamma(\chi):=\chi\circ \psi^{-1}$ is a completely contractive
homomorphism from $H^\infty({\bf D}_{g,\text{\rm rad}}^l)$ to
$H^\infty({\bf D}_{f,\text{\rm rad}}^m)$.
 Let $\chi\in
H^\infty({\bf D}_{f,\text{\rm rad}}^m)$ have the representation
$\sum_{k=0}^\infty \sum_{|\alpha|=k} c_\alpha Z_\alpha $, and fix
$X:=(X_1,\ldots, X_n)\in {\bf D}_{f,\text{\rm rad}}^m(\cH)$. Since
$\psi^{-1}(X)\in {\bf D}_{g,\text{\rm rad}}^m(\cH)$ and $\psi({\bf
D}_{g,\text{\rm rad}}^l(\cH))\subset {\bf D}_{f,\text{\rm
rad}}^m(\cH)$ we   deduce that
$$
(\chi\circ \psi)(\psi^{-1}(X))=\sum_{k=0}^\infty \sum_{|\alpha|=k}
c_\alpha \psi_\alpha(\psi^{-1}(X)),
$$
where  the convergence is in the operator norm topology. Since
$\psi\in Bih({\bf D}_{g,\text{\rm
rad}}^l, {\bf D}_{f,\text{\rm
rad}}^m)$, we have
$\psi_i(\psi^{-1}(X))=X_i$, $i=1,\ldots, n$, and, therefore,
$\psi_\alpha(\psi^{-1}(X))=X_\alpha$ for $\alpha\in \FF_n^+$.
Consequently, $(\Gamma\circ \Psi)(\chi)=\chi$ for any $\chi\in H^\infty({\bf
D}_{f,\text{\rm rad}}^m)$. Similarly, one can prove that
$\Phi\circ \Gamma=id$. Summing up, we conclude that the map
$\Psi$ is  a
   completely isometric isomorphism  of  noncommutative  Hardy
   algebras.

If, in addition,  we assume that $\tilde \psi$ is a pure $n$-tuple,
then, according to  Theorem \ref{more-prop2}, we have
$$
\widehat \Psi(\widetilde \chi)=\widetilde{\Psi(\chi)}=\widetilde
{\chi\circ\psi}= {\cB}_{\widetilde \psi}[\widetilde \chi]={K_{{\widetilde
\psi}, f}^{(m)}}^* [\widetilde \chi\otimes
   I_{F^2(H_n)}]K_{{\widetilde \psi},f}^{(m)},
$$
where ${\cB}_{\widetilde \psi}$ is the noncommutative Berezin
transform at the pure $n$-tuple  $\widetilde \psi$ (see relation
\eqref{Be-transf}). Consequently, $\Psi$ is SOT-continuous on
bounded sets.

Now, we prove the last part of the theorem. According to Theorem
\ref{Cartan3}, if $\psi\in  Bih({\bf D}_{g }^l, {\bf D}_{f, }^m)$
and $\psi(0)=0$, then $n=q$ and $\psi$ has the form
\begin{equation*}
 \psi(X_1,\ldots, X_n)=[X_1,\ldots, X_n]U , \qquad (X_1,\ldots, X_n)\in{\bf D}_{g}^l(\cH),
\end{equation*}
where    $U$  is an invertible operator on $\CC^n$  such that
$$
[W_1^{(g)},\ldots, W_n^{(g)}]U\in {\bf D}_f^m(F^2(H_n)) \quad \text{ and } \quad
[W_1^{(f)},\ldots, W_n^{(f)}]U^{-1}\in {\bf D}_g^l(F^2(H_n)).
$$
  Hence,  it is obvious that we have
$\psi(rW_1^{(g)},\ldots, rW_n^{(g)})=r\psi(W_1^{(g)},\ldots,
W_n^{(g)})\in {\bf D}_{f, \text{\rm rad}}^m(F^2(H_n))$ and
$\psi^{-1}(rW_1^{(f)},\ldots,
rW_n^{(f)})=r\psi^{-1}(W_1^{(f)},\ldots, W_n^{(f)})\in {\bf D}_{g,
\text{\rm rad}}^l(F^2(H_n))$ for any $r\in[0,1)$. Due to Lemma
\ref{range}, we deduce that $\psi({\bf D}_{g,\text{\rm
rad}}^l)\subseteq {\bf D}_{f,\text{\rm rad}}^m$ and $\psi^{-1}({\bf
D}_{f,\text{\rm rad}}^m)\subseteq {\bf D}_{g,\text{\rm rad}}^l$.
Therefore, $\psi\in Bih({\bf D}_{g,\text{\rm rad}}^l, {\bf
D}_{f,\text{\rm rad}}^m)$. Applying the first part of this theorem,
we complete the proof.
\end{proof}

We introduce now the set $Bih({\bf D}_{g,\text{\rm pure}}^l,{\bf
D}_{f,\text{\rm pure}}^m)$  of all
 bijections
 $\varphi:{\bf D}_{g,\text{\rm
pure}}^l(\cH)\to {\bf D}_{f,\text{\rm pure}}^m(\cH)$ such that
$\varphi|_{{\bf D}_{g,\text{\rm rad}}^l(\cH)}$ and
$\varphi^{-1}|_{{\bf D}_{f,\text{\rm rad}}^m(\cH)}$ are  pure free
holomorphic functions, i.e., their model boundary functions are
pure,  and $\varphi$ and $\varphi^{-1}$ are their radial extensions
in the strong operator topology, respectively, i.e.,
$$
\varphi(X)=\text{\rm SOT-}\lim_{r\to 1} \varphi(rX), \qquad
  X\in {\bf D}_{g,\text{\rm pure}}^l(\cH),
$$
and
$$\varphi^{-1}(X)=\text{\rm SOT-}\lim_{r\to 1}
\varphi^{-1}(rX),\qquad X\in {\bf D}_{f,\text{\rm pure}}^m(\cH).
$$

\begin{theorem}\label{implement}  Let $f$ and $g$ be positive regular free
holomorphic functions with $n$ and $q$ indeterminates, respectively.
A map  $\Psi: H^\infty({\bf D}_{f,\text{\rm rad}}^m)\to
H^\infty({\bf D}_{g,\text{\rm rad}}^l)$ is  a unitarily implemented
isomorphism if and only if  it has the form
 $$
 \Psi(\chi):=\chi\circ \varphi,\qquad \chi\in H^\infty({\bf D}_{f,\text{\rm
rad}}^m),
 $$
for some
 $\varphi \in  Bih({\bf D}_{g,\text{\rm pure}}^l,{\bf D}_{f,\text{\rm pure}}^m)$
  with the property that $\widetilde \varphi$ is unitarily
  equivalent to  the universal model $(W_1^{(f)},\ldots, W_n^{(f)})$ associated with ${\bf D}_f^m$.
 In this case,
 $$\widehat
\Psi(\widetilde\chi)=\cB_{\widetilde \varphi}[\widetilde
\chi]:={K_{f,\widetilde \varphi}^{(m)}}^* (\widetilde \chi\otimes
I_{\cD_{f,m,\widetilde \varphi}})K_{f,\widetilde
\varphi}^{(m)},\qquad \widetilde \chi\in F_n^\infty({\bf D}_f^m),
 $$
 where   the noncommutative Berezin
kernel $K_{f,\widetilde \varphi}^{(m)}$   is a unitary operator and
$\text{\rm dim}\,\cD_{f,m,\widetilde \varphi}=1$.
\end{theorem}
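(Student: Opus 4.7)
The plan is to adapt the strategy of Theorem \ref{auto-disk} to the Hardy setting, but with Theorem \ref{more-prop2} and Lemma \ref{pure} playing the roles that Theorem \ref{more-prop} plays there, since the key radial limits must now be taken in SOT rather than in norm.

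Forward direction: Suppose $\widehat\Psi$ is implemented by a unitary $U:F^2(H_q)\to F^2(H_n)$, so $\widehat\Psi(T)=U^*TU$ for $T\in F_n^\infty({\bf D}_f^m)$. Set $\widetilde\varphi_i:=\widehat\Psi(W_i^{(f)})=U^*W_i^{(f)}U$. Then $\widetilde\varphi=(\widetilde\varphi_1,\ldots,\widetilde\varphi_n)$ is unitarily equivalent to the universal model and hence lies in ${\bf D}_f^m(F^2(H_q))$, is pure, and satisfies $(id-\Phi_{f,\widetilde\varphi})^m(I)=U^*P_{\CC 1}U$, which is a rank-one projection, so $\dim\cD_{f,m,\widetilde\varphi}=1$. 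Tracing the definition \eqref{def-Be2} through the unitary equivalence shows that $K_{f,\widetilde\varphi}^{(m)}$ factors as $(I_{F^2(H_n)}\otimes U^*)\circ K_{f,W^{(f)}}^{(m)}\circ U$, and a direct computation of $K_{f,W^{(f)}}^{(m)}$ on the orthonormal basis $\{e_\beta\}$ (using $\Delta_{f,m,W^{(f)}}=P_{\CC 1}$ and the formulas for $W_\alpha^*e_\beta$) identifies it with the canonical unitary $F^2(H_n)\to F^2(H_n)\otimes\CC 1$; hence $K_{f,\widetilde\varphi}^{(m)}$ is unitary.

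Define $\varphi_i(X):=\cB_X[\widetilde\varphi_i]$ for $X\in{\bf D}_{g,\text{\rm pure}}^l(\cH)$, using the pure Berezin transform \eqref{Be-transf}. By Lemma \ref{pure} applied to $\widetilde\varphi\in{\bf D}_{f,\text{\rm pure}}^m(F^2(H_q))$, one has $\varphi(X)\in{\bf D}_{f,\text{\rm pure}}^m(\cH)$. Mirroring the construction with $\widetilde\xi_i:=\widehat\Psi^{-1}(W_i^{(g)})=UW_i^{(g)}U^*$ yields $\xi:{\bf D}_{f,\text{\rm pure}}^m(\cH)\to{\bf D}_{g,\text{\rm pure}}^l(\cH)$. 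The Fourier-series plus SOT-continuity argument used in the proof of Theorem \ref{auto-disk} transfers verbatim (since the pure Berezin transform is SOT-continuous on bounded sets) to give $\xi\circ\varphi=\text{id}$ and $\varphi\circ\xi=\text{id}$, and the SOT-radial property of $\varphi$ and $\varphi^{-1}$ is inherited from \eqref{sot}. Finally, for any $\chi\in H^\infty({\bf D}_{f,\text{\rm rad}}^m)$, Theorem \ref{more-prop2}(c) applied to $F=\chi$ and $\Phi=\varphi$ gives $\widetilde{\chi\circ\varphi}=\cB_{\widetilde\varphi}[\widetilde\chi]=U^*\widetilde\chi U=\widehat\Psi(\widetilde\chi)$, so $\Psi(\chi)=\chi\circ\varphi$ and the displayed formula for $\widehat\Psi$ holds.

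Converse: Given $\varphi$ as stated, let $U:F^2(H_q)\to F^2(H_n)$ be the unitary with $U^*W_i^{(f)}U=\widetilde\varphi_i$, and define $\widehat\Psi(T):=U^*TU$. Since $F_n^\infty({\bf D}_f^m)$ is the SOT-closed algebra generated by $W_1^{(f)},\ldots,W_n^{(f)}$ and $I$, and $U^*TU$ is SOT-continuous in $T$ on bounded sets, $\widehat\Psi$ maps $F_n^\infty({\bf D}_f^m)$ into $F_q^\infty({\bf D}_g^l)$; an inverse is obtained analogously from $\widetilde{\varphi^{-1}}$, so $\widehat\Psi$ (hence $\Psi$) is an isomorphism, unitarily implemented by construction. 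The identity $\Psi(\chi)=\chi\circ\varphi$ is again supplied by Theorem \ref{more-prop2}(c). The main obstacle is the Berezin-kernel-as-unitary step: converting the abstract unitary equivalence of $\widetilde\varphi$ with the universal model into the concrete fact that $K_{f,\widetilde\varphi}^{(m)}$ is a unitary with one-dimensional defect space requires carefully exploiting $\Delta_{f,m,W^{(f)}}^2=P_{\CC 1}$ inside definition \eqref{def-Be2}; once that identification is in place, the equality $\widehat\Psi(\widetilde\chi)=\cB_{\widetilde\varphi}[\widetilde\chi]$ is immediate from $\widetilde{\bf B}_T[\chi]={K_{f,T}^{(m)}}^*(\chi\otimes I)K_{f,T}^{(m)}$.
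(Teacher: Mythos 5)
Your proposal is correct and follows essentially the same route as the paper: the forward direction (setting $\widetilde\varphi_i:=\widehat\Psi(W_i^{(f)})$, $\widetilde\xi_i:=\widehat\Psi^{-1}(W_i^{(g)})$, invoking Lemma \ref{pure}, Theorem \ref{f-infty}, and the SOT-version of the Fourier/Berezin argument from Theorem \ref{auto-disk}) is the paper's argument verbatim, and the identification $K_{f,W^{(f)}}^{(m)}=$ canonical unitary with $\dim\cD_{f,m,W^{(f)}}=1$ is exactly how the paper proves unitarity of $K_{f,\widetilde\varphi}^{(m)}$. The only difference is organizational: in the converse you build $\widehat\Psi(T)=U^*TU$ directly and identify it with $\chi\mapsto\chi\circ\varphi$ via Theorem \ref{more-prop2}(c) (which also supplies the needed identity $UW_j^{(g)}U^*=\widetilde{(\varphi^{-1})_j}$ for surjectivity, from $\varphi^{-1}\circ\varphi=\mathrm{id}$), whereas the paper first shows $\chi\mapsto\chi\circ\varphi$ is a completely isometric isomorphism and then exhibits the implementing unitary through the Berezin kernel.
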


\begin{proof}
Assume that  the induced map   $\widehat
 \Psi:F_n^\infty({\bf D}_f^m)\to F_q^\infty({\bf D}_g^l)$ is a
  unitarily implemented isomorphism.
  Then, there is a unitary operator $U\in B(F^2(H_q), F^2(H_n))$ such that
$\widehat \Psi (\widetilde\chi)=U^* \widetilde\chi U\in
F_q^\infty({\bf D}_g^l)$ for all $\widetilde\chi\in F_n^\infty({\bf
D}_f^m)$. Denote
 \begin{equation}
 \label{var}
 \widetilde\varphi_i:=\widehat \Psi(W_i^{(f)})\in F_q^\infty({\bf D}_g^l),
\qquad i=1,\ldots, n.
 \end{equation}
Since $\widetilde \varphi:=(\widetilde \varphi_1,\ldots, \widetilde
\varphi_n)$ is unitarily equivalent to  $(W_1^{(f)},\ldots,
W_n^{(f)})$, which is a pure $n$-tuple in the noncommutative domain
${\bf D}_f^m(F^2(H_n))$, so is the $n$-tuple
  $\widetilde \varphi$. Setting
 \begin{equation}
  \label{fib}
\varphi_i(X):={\cB}_X[\widetilde \varphi_i], \qquad X\in {\bf
D}_{q,\text{\rm pure}}^l(\cH),
 \end{equation}
 and due to Theorem \ref{f-infty} and Lemma \ref{pure},  the map
$\varphi(X):=(\varphi_1(X),\ldots,\varphi_n(X))$   is a  bounded
free holomorphic function on ${\bf D}_{g,\text{\rm rad}}^l(\cH)$ with values
 in ${\bf D}_{f,\text{\rm pure}}^m(\cH)$.
Moreover, due to the properties  of the noncommutative Berezin
transform (see \eqref{Be-transf}), we have $\varphi(X)=\text{\rm
SOT-}\lim_{r\to 1} \varphi(rX)$  for $  X\in {\bf D}_{g,\text{\rm
pure}}^l(\cH)$.

  Similarly,  we set
 \begin{equation}
 \label{xi2}\widetilde\xi_i:=\widehat \Psi^{-1}(W_i^{(g)})\in F_n^\infty({\bf D}_f^m),\qquad
 i=1,\ldots, q,
 \end{equation}
   and  using the fact that  $\widehat
 \Psi^{-1}:F_q^\infty({\bf D}_g^l)\to F_n^\infty({\bf D}_f^m)$ is given
 by $\widehat \Psi^{-1}(\widetilde \zeta)=U \widetilde \zeta U^*$, $\widetilde \zeta\in F_q^\infty({\bf D}_g^l)$, we deduce that
$\widetilde \xi:=(\widetilde \xi_1,\ldots, \widetilde \xi_q)$ is  a
pure $q$-tuple in
  ${\bf D}_g^l(F^2(H_n))$. Now, if we define
  \begin{equation}
  \label{xib}
  \xi_i(X):={\cB}_X[\widetilde \xi_i], \qquad X\in {\bf D}_{f,\text{\rm
pure}}^m(\cH),\ i=1,\ldots,q,
\end{equation}
 then, using again   Theorem \ref{f-infty} and Lemma \ref{pure},  we obtain that the map
$\xi(X):=(\xi_1(X),\ldots,\xi_n(X))$  is a  bounded free holomorphic
function on ${\bf D}_{f,\text{\rm rad}}^m(\cH)$ with values in ${\bf
D}_{g,\text{\rm pure}}^l(\cH)$ and has the property that
$\xi(X)=\text{\rm SOT-}\lim_{r\to 1} \xi(rX)$  for $ X\in {\bf
D}_{f,\text{\rm pure}}^m(\cH)$. Now, due to Theorem \ref{f-infty},
each $\widetilde \xi_i\in F_n^\infty({\bf D}_f^m)$, $i=1,\ldots, q$,
has a unique Fourier representation $\sum_{\alpha\in \FF_n^+}
c_\alpha W_\alpha^{(f)}$ such that
 \begin{equation}
 \label{xibe}
 \widetilde \xi_i=\text{\rm SOT-}\lim_{r\to 1} \sum_{k=0}^\infty
\sum_{|\alpha|=k} c_\alpha r^{|\alpha|} W_\alpha^{(f)},
\end{equation}
 where the
limit is in the strong operator topology. Hence, using the
continuity  of $\widehat \Psi$ in the strong operator topology, and
relations \eqref{var} and \eqref{xi2}, we obtain
\begin{equation*}
\begin{split}
W_i^{(g)}&=\widehat \Psi(\widetilde \xi_i)=\widehat \Psi\left(\text{\rm
SOT-}\lim_{r\to 1} \sum_{k=0}^\infty \sum_{|\alpha|=k} c_\alpha
r^{|\alpha|} W_\alpha^{(f)}
\right)\\
&=\text{\rm SOT-}\lim_{r\to 1} \sum_{k=0}^\infty \sum_{|\alpha|=k}
c_\alpha r^{|\alpha|} \widehat \Psi(W_\alpha^{(f)})= \text{\rm
SOT-}\lim_{r\to 1} \sum_{k=0}^\infty \sum_{|\alpha|=k} c_\alpha
r^{|\alpha|} \widetilde \varphi_\alpha,
\end{split}
\end{equation*}
for any $i=1,\ldots,q$. Consequently, applying   the noncommutative
Berezin transform  at $X\in {\bf D}_{g, \text{\rm pure}}^l(\cH)$ on
the Hardy algebra $F_q^\infty({\bf D}_g^l)$, using its continuity in
the strong operator topology on bounded sets and relations
\eqref{fib}, \eqref{xibe}, \eqref{xib},  we have
\begin{equation*}
\begin{split}
X_i&={\cB}_X[W_i^{(g)}]={\cB}_X\left[\text{\rm SOT-}\lim_{r\to
1}\sum_{k=0}^\infty \sum_{|\alpha|=k} c_\alpha
r^{|\alpha|} \widetilde \varphi_\alpha\right]\\
&=\text{\rm SOT-}\lim_{r\to 1} \sum_{k=0}^\infty \sum_{|\alpha|=k}
c_\alpha
r^{|\alpha|}  {\cB}_X[\widetilde\varphi_\alpha]
=\text{\rm SOT-}\lim_{r\to 1} \sum_{k=0}^\infty \sum_{|\alpha|=k}
c_\alpha
r^{|\alpha|}   \varphi_\alpha(X)\\
&=\text{\rm SOT-}\lim_{r\to1}
\cB_{\varphi(X)}\left[\sum_{k=0}^\infty \sum_{|\alpha|=k} c_\alpha
r^{|\alpha|} W_\alpha^{(f)}\right]\\
&=\cB_{\varphi(X)}[\widetilde \xi]=\xi_i(\varphi(X))
\end{split}
\end{equation*}
for each $i=1,\ldots, q$, and any $X\in {\bf D}_{g,\text{\rm
pure}}^l(\cH)$. Hence $(\xi\circ \varphi)(X)=X$ for $X\in {\bf
D}_{g, \text{\rm pure}}^l(\cH)$. Similarly, one can prove that
$(\varphi\circ \xi)(X)=X$ for  $X\in {\bf D}_{f,\text{\rm
pure}}^m(\cH)$. Consequently,
 $\varphi:{\bf D}_{g,\text{\rm pure}}^l(\cH)\to {\bf D}_{f,\text{\rm pure}}^m(\cH)$ is
 a bijection  such that
  $\varphi$ and $\varphi^{-1}:=\xi$ are free holomorphic functions on   the noncommutative domain
  ${\bf D}_{g,\text{\rm
rad}}^l(\cH)$ and ${\bf D}_{f,\text{\rm rad}}^m(\cH)$, respectively,
and  have the required properties. Therefore,  $\varphi \in  Bih({\bf
D}_{g,\text{\rm pure}}^l,{\bf D}_{f,\text{\rm pure}}^m)$.

Let   $\widetilde \chi\in F_n^\infty({\bf D}_f^m)$  have  the
  representation $\sum_{\alpha\in \FF_n^+} d_\alpha
W_\alpha^{(f)}$. Then
 $
 \widetilde \chi=\text{\rm SOT-}\lim\limits_{r\to 1} \sum\limits_{k=0}^\infty
\sum_{|\alpha|=k} d_\alpha r^{|\alpha|} W_\alpha^{(f)}. $
   Due to relation
\eqref{var} and the continuity of
$\widehat \Psi$ in the strong operator topology, we have
$$
\widehat \Psi(\widetilde \chi)=\text{\rm SOT-}\lim_{r\to 1}
\sum_{k=0}^\infty \sum_{|\alpha|=k} d_\alpha r^{|\alpha|} \widetilde
\varphi_\alpha.
$$
Consequently, applying   the noncommutative Berezin  transform at
$X\in {\bf D}_{g,\text{\rm pure}}^l(\cH)$,  using its SOT-continuity
on bounded sets, and relation \eqref{fib}, we get
\begin{equation*}
\begin{split}
[\Psi(\chi)](X)&={\cB}_X[\widehat \Psi(\widetilde \chi)] =\text{\rm
SOT-}\lim_{r\to 1} \sum_{k=0}^\infty \sum_{|\alpha|=k} d_\alpha
r^{|\alpha|} {\cB}_X[\widetilde \varphi_\alpha]\\
 &=\text{\rm SOT-}\lim_{r\to 1}
\sum_{k=0}^\infty \sum_{|\alpha|=k} d_\alpha r^{|\alpha|}
\varphi_\alpha(X)\\
&=\text{\rm SOT-}\lim_{r\to1}\cB_{\varphi(X)}\left[\sum_{k=0}^\infty
\sum_{|\alpha|=k} d_\alpha
r^{|\alpha|} W_\alpha^{(f)}\right]\\
&=\cB_{\varphi(X)}[\widetilde \chi]=(\chi\circ\varphi)(X)
\end{split}
\end{equation*}
for any $X\in {\bf D}_{g,\text{\rm pure}}^l(\cH)$, which proves the
direct implication of the theorem.

Conversely, assume that  $\varphi\in
  Bih({\bf D}_{g,\text{\rm pure}}^l,{\bf D}_{f,\text{\rm pure}}^m)$
  and $\widetilde \varphi$ is unitarily
  equivalent to  the universal model $W^{(f)}:=(W_1^{(f)},\ldots,
  W_n^{(f)})$. Define
 $
 \Psi(\chi):=\chi\circ \varphi ,\quad \chi\in H^\infty( {\bf D}_{f,\text{\rm rad}}^m)$.
 First note that, due to Theorem \ref{more-prop2},
  $\chi\circ \varphi\in H^\infty( {\bf D}_{g,\text{\rm rad}}^l)$
  for all $\chi\in H^\infty( {\bf D}_{f,\text{\rm rad}}^m)$, and
   $\Psi $ is a
well-defined completely contractive homomorphism.
 A similar result can be obtained for  the map
$\Lambda(\zeta):=\zeta\circ\varphi^{-1}$, $\zeta\in H^\infty( {\bf
D}_{g, \text{\rm rad}}^l)$. Now, for any $\chi\in H^\infty( {\bf
D}_{f, \text{\rm rad}}^m)$ and $X\in {\bf D}_{f,\text{\rm
pure}}^m(\cH)$, we have $( \Lambda\circ\Psi)(\chi)=(\chi\circ
\varphi)\circ \varphi^{-1}$ and
\begin{equation*}
\begin{split}
[(\chi\circ \varphi)\circ \varphi^{-1}](X)&=(\chi\circ
\varphi)(\varphi^{-1}(X))=\text{\rm SOT-}\lim_{r\to 1}
\chi(r\varphi_1(\varphi^{-1}(X)),\ldots, r\varphi_n(\varphi^{-1}(X)))\\
&=\text{\rm SOT-}\lim_{r\to1} \chi(rX_1,\ldots, rX_n)=\chi(X_1,\ldots,
X_n).
\end{split}
\end{equation*}
Hence, $\Lambda\circ \Psi=id$. Similarly, we can show that
$\Psi\circ \Lambda=id$  and deduce that   $\Psi$ is a completely
isometric isomorphism. Due to Theorem \ref{more-prop2},  we have
\begin{equation}
\label{ultima}
 \widehat \Psi(\widetilde \chi)=\widetilde{\chi\circ
\varphi}=\cB_{\widetilde\varphi}[\widetilde \chi]={K_{f,\widetilde
\varphi}^{(m)}}^* (\widetilde \chi\otimes I_{\cD_{f,m,\widetilde
\varphi}})K_{f,\widetilde \varphi}^{(m)},\qquad \widetilde \chi\in
F_n^\infty({\bf D}_f^m).
 \end{equation}
 Now, we  show that the Berezin kernel
$K_{f,\widetilde \varphi}^{(m)}$ is a unitary operator.
 Recall that
the   noncommutative Berezin kernel \ $K_{f,W^{(f)}}^{(m)}:F^2(H_n)
\to F^2(H_n)\otimes \cD_{f,m,W^{(f)}} $  is defined
  by
 \begin{equation*}
 K_{f,W^{(f)}}^{(m)}\ell=\sum_{\alpha\in \FF_n^+} \sqrt{b^{(m)}_\alpha}
e_\alpha\otimes \Delta_{f,m,W^{(f)}} {W^{(f)}_\alpha}^* \ell,\qquad
\ell\in F^2(H_n),
\end{equation*}
where
$\cD_{f,m,W^{(f)}}:=\overline{\Delta_{f,m,W^{(f)}}(F^2(H_n))}$.
Since
 $\Delta^2_{f,m,W^{(f)}}=\left(id-\Phi_{f,W^{(f)}}\right)^{m}(I)=P_\CC$ (see Section 1), we deduce
  that $\text{\rm dim}\,\cD_{f,m,W^{(f)}}=1$, so that we can identify the space
   $\cD_{f,m,W^{(f)}}$ with $\CC$. Under this identification and using   relation
   \eqref{WbWb}, simple computations reveal that
    $K_{f,W^{(f)}}^{(m)}=I_{F^2(H_n)}$.
Since $\widetilde \varphi$ is unitarily
  equivalent to  the universal model $W^{(f)}:=(W_1^{(f)},\ldots,
  W_n^{(f)})$,  one can easily see that the Berezin kernel
$K_{f,\widetilde \varphi}^{(m)}$ is a unitary operator. Therefore,
due to relation \eqref{ultima},  $\widehat \Psi$ is a unitarily
implemented isomorphism.
  The proof is
complete.
\end{proof}
We remark that  when $m=1$,  we can  use the functional model for
pure $n$-tuples of operators in ${\bf D}_f^1(\cH)$ in terms of
characteristic functions (see Theorem 3.26 from \cite{Po-domains}),
to deduce that $\widetilde \varphi$ is unitarily
  equivalent to  the universal model $W^{(f)}:=(W_1^{(f)},\ldots,
  W_n^{(f)})$ if and only if  $\widetilde\varphi$ is a pure $n$-tuple, i.e., $\text{\rm
SOT-}\lim_{k\to \infty}\Phi_{f,\widetilde \varphi}^k(I)=0$, with
  $\text{\rank}\,[I-\Phi_{f,\widetilde \varphi}(I)]=1$ and the
  characteristic function $\Theta_{\widetilde\varphi}=0$.

The group of all  unitarily implemented   automorphisms
 of the   Hardy  algebra  $H^\infty({\bf D}_{f,\text{\rm
rad}}^m)$ is denoted
 by $Aut_{u}(H^\infty({\bf D}_{f,\text{\rm rad}}^m))$. Let
  $Aut_w({\bf D}_{f,\text{\rm pure}}^m)$  be the set of all
$\varphi \in Bih({\bf D}_{f,\text{\rm pure}}^m,{\bf D}_{f,\text{\rm
pure}}^m)$ such that $\widetilde \varphi$ is unitarily
  equivalent to  the universal model $W^{(f)}:=(W_1^{(f)},\ldots,
  W_n^{(f)})$. Using Theorem \ref{f-infty} and some  results
  concerning the composition of pure free holomorphic functions (see
  Lemma \ref{pure} and Theorem \ref{more-prop2}), one can easily show
  that $Aut_w({\bf D}_{f,\text{\rm pure}}^m))$ is a group with
  respect to the composition.
As a consequence of Theorem \ref{implement}, we
  deduce that
  $$
  Aut_{u}(H^\infty({\bf D}_{f,\text{\rm rad}}^m))\simeq Aut_w({\bf D}_{f,\text{\rm
  pure}}^m).
  $$
A closer look at Theorem \ref{implement} reveals that one can easily
obtain a version of it for noncommutative domain algebras. In this
case, $\varphi \in Bih_w( {\bf D}_g^l,{\bf D}_f^m)$, the set of all
$\psi\in  Bih( {\bf D}_g^l,{\bf D}_f^m)$ such that  $\widetilde
\psi$ is unitarily
  equivalent to  the universal model $W^{(f)}:=(W_1^{(f)},\ldots,
  W_n^{(f)})$. The proof is basically the same but uses some parts
  from the proof of Theorem \ref{auto-disk}. Setting  $Aut_w({\bf
  D}_f^m)):=Bih_w( {\bf D}_f^m,{\bf D}_f^m)$, we can deduce that
$$
  Aut_{u}(A({\bf D}_{f,\text{\rm rad}}^m))\simeq Aut_w({\bf D}_{f}^m).
  $$
Note that due to our remark above,  when $m=1$,    the group
$Aut_w({\bf D}_{f}^1)$ consists of all free biholomorphic functions
$\varphi\in Aut({\bf D}_{f}^1)$ with the property that the model
boundary function $\widetilde\varphi$ is a pure $n$-tuple  with
  $\text{\rank}\,[I-\Phi_{f,\widetilde \varphi}(I)]=1$ and the
  characteristic function $\Theta_{\widetilde\varphi}=0$.

We mention that in the particular case   when $m=1$ and
$f=X_1+\cdots +X_n$ we have a more precise result. Davidson and
Pitts (\cite{DP2}) used Voiculescu's (\cite{Vo1}) group of
automorphisms of the Cuntz-Toeplitz algebra $C^*(S_1,\ldots, S_n)$
(\cite{Cu}),
 to show (among other things) that the subgroup $Aut_u(F_n^\infty)$ of  unitarily implemented
automorphisms of the noncommutative analytic Toeplitz algebra
$F_n^\infty$  is isomorphic with $Aut(\BB_n)$. In
\cite{Po-automorphism},
 we obtained a new proof of their result, using
 noncommutative Poisson transforms \cite{Po-poisson}, and
showed that   the unitarily implemented automorphisms of the
noncommutative  disc algebra $\cA_n$   and the  algebra
$F_n^\infty$, respectively, are determined by the free holomorphic
automorphisms of $[B(\cH)^n]_1$, via the noncommutative Poisson
transform.  According to \cite{DP2} and \cite{Po-automorphism}, we
have
$$
 Aut([B(\cH)^n]_1)\simeq Aut(\BB_n)\simeq  Aut_u(\cA_n)\simeq Aut_u(F_n^\infty).
 $$

We remark  here that the conformal automorphisms of $\BB_n$ also
occur in the   work  of Muhly and Solel  (\cite{MuSo3})
 concerning the automorphisms of Hardy algebras associated with
 $W^*$-correspondence over von Neumann algebras (\cite{MuSo1},
 \cite{MuSo2}).

       %

      \end{document}